\documentclass[11pt,reqno]{amsart}

\usepackage{amsfonts,amssymb,amsmath,xcolor}
%mathbbol

\usepackage[cp1251]{inputenc}
\usepackage[T2B]{fontenc}
\usepackage[english]{babel}
\usepackage[mathscr]{eucal}
\usepackage{calrsfs}

\usepackage{tikz}
\usepackage{ragged2e}

\usepackage{tikz}
%\usetikzlibrary{graphs}
%\usepackage[matrix,arrow,curve,frame,poly,arc]{xy}
\usepackage[all]{xy}

\parindent 15pt

\newtheorem{theorem}{Theorem}[section]

\newtheorem{definition}[theorem]{Definition}

\theoremstyle{remark}
\newtheorem{remark}[theorem]{Remark}
\newtheorem{example}[theorem]{Example}

\def\loc{\text{\rm loc}}

\usepackage[left=2.0cm,right=2.2cm,
    top=2.5cm,bottom=2.5cm]{geometry}

\begin{document}

\centerline{\bf IMAGES OF INTEGRATION OPERATORS IN WEIGHTED
FUNCTION SPACES}

\smallskip\begin{center}
{\bf Elena P. Ushakova}\footnote{The results of Sections~4 and 5 were performed at Steklov Mathematical Institute of Russian Academy of Sciences under financial support of the Russian Science Foundation (project 19-11-00087). The rest part of the paper was carried out within the framework of the State Tasks of Ministry of Education and Science of Russian Federation for V.A. Trapeznikov Institute of Control Sciences of Russian Academy of Sciences and Computing Center of Far--Eastern Branch of Russian Academy of Sciences, it was also partially supported by the Russian Foundation for Basic Research (project 19--01--00223).}\end{center}

\smallskip
\centerline{\textit {V.A. Trapeznikov Institute of Control Sciences of RAS, Moscow, Russia;}}
\centerline{\textit {Steklov Mathematical Institute of RAS, Moscow, Russia;} }
\centerline{\textit {Computing Center of FEB RAS, Khabarovsk, Russia.}}
\centerline{E--mail address: elenau@inbox.ru}

\medskip

\noindent\textit{Key words}: Riemann--Liouville operator; smoothness function space; local Muckenhoupt weight; spline wavelet basis; atomic decomposition; Battle--Lemari\'{e} wavelet system.
\\ \textit{MSC (2010)}: Primary 47G10, 42C40; Secondary 46E35, 47B06.

%\vskip 0.2cm

{\small {\bf Abstract.} Images of integration operators of natural orders are considered as elements of Besov and Triebel--Lizorkin %function
 spaces with local Muckenhoupt weights on $\mathbb{R}^N$. The results connect entropy and approximation numbers of embedding operators with the same characteristics of the integration operators.}

\smallskip
\section{Introduction}
Let $N\in\mathbb{N}$. For $k\in\{1,\ldots,N\}$ and $c_k\in\mathbb{R}$ denote $\Omega^+_k:=[c_k,+\infty)$ and $\Omega^-_k:=(-\infty,c_k]$, and put ${y}_{x_k}:=(y_1,\ldots,y_{k-1},x_k,y_{k+1},\ldots,y_N)\in\mathbb{R}^N$, that is $y=(y_1,\ldots,y_k,\ldots,y_N)=y_{y_k}$.

Fix $l,n\in\{1,\ldots,N\}$ and assume $f\in L_1^\mathrm{loc}(\mathbb{R}^N)$. Following the notations adopted in \cite{SKM}, denote by
\begin{equation}\label{left}
I_{\Omega_l^+}^{\alpha_l} f({y}_{x_l}):=\frac{1}{\Gamma(\alpha
_l)}\int_{c_l}^{x_l}{(x_l-y_l)^{\alpha_l-1}}{f({y})}\,dy_l\qquad(x_l\in\Omega_l^+)
\end{equation} and
\begin{equation}\label{right}
I_{\Omega_n^-}^{\alpha_n} f({y}_{x_l}):=\frac{1}{\Gamma(\alpha_n)}\int_{x_n}^{c_n}{(y_n-x_n)^{\alpha_n-1}}{f({y})}\,dy_n\qquad(x_n\in\Omega_n^-)
\end{equation} the left-- and right--hand side Riemann--Liouville operators of (positive) orders $\alpha_l$ and $\alpha_n$, respectively. For natural $\alpha_k=m_k$, $k\in\{l,n\}$,  $I_{\Omega_l^+}^{m_l}$ and $I_{\Omega_n^-}^{m_n}$ are integration operators of natural orders $m_l$ and $m_n$.

Given $N\in\mathbb{N}$ put $|m|:=\sum_{k=1}^Nm_k$ for a set of multi--indices $m=(m_1,\ldots,m_N)$, $m_k\in\mathbb{N}_0:=\mathbb{N}\cup\{0\}$. Then, using the notation $I^0_{\Omega_k^0}f(y):=f(y_{y_k})$, $k\in\{1,\ldots,N\}$, we can write, formally, if $N>1$ that $I_{\Omega_l^+}^{m_l}={I^0_{\Omega_1^0}\ldots I_{\Omega_{l-1}^0}^0}I_{\Omega_l^+}^{m_l}{I^0_{\Omega_{l+1}^0}\ldots I_{\Omega_N^0}^0}=:I_{\Omega^{(0,\ldots,+,\ldots,0)}}^{|m|}$ for $m=(0,\ldots,0,m_l,0\ldots,0)$ and, similarly, that $I_{\Omega_n^-}^{m_n}={I^0_{\Omega_1^0}\ldots I_{\Omega_{n-1}^0}^0}I_{\Omega_n^-}^{m_n}{I^0_{\Omega_{n+1}^0}\ldots I_{\Omega_N^0}^0}=:I_{\Omega^{(0,\ldots,-,\ldots,0)}}^{|m|}$ with $m=(0,\ldots,0,m_n,0\ldots,0)$.

Let $0<p<\infty$, $0<q\le\infty$ and $s\in\mathbb{R}$. For functions $f$, locally integrable on $\mathbb{R}^N$, we study images of various combinations of operators \eqref{left} and/or \eqref{right} --- integration operators $I^{|m|}_{\Omega^\star}$ of orders $m=(m_1,\ldots,m_N)$ --- belonging to weighted Besov $B_{pq}^{s,w}(\mathbb{R}^N)$ or Triebel--Lizorkin $F_{pq}^{s,w}(\mathbb{R}^N)$ spaces. Our consideration is reduced to weights $w$ from local Muckenhoupt classes $\mathscr{A}_p^\loc$, $1\le p\le\infty$. %, (see \S\,\ref{WFS} for definitions).
{For simplicity, we consider $w$ of product type, that is weights $w$, we operate with, admit factorisation of the form $w(x_1,\ldots,x_N)=w_1(x_1)\cdot\ldots\cdot w_N(x_N)$ with one variable functions $w_k(x_k)$, $k=1,\ldots,N$.% of homogeneous type satisfying $w_k(\lambda\cdot x_k)=\lambda\cdot w_k(x_k)$, $\lambda>0$, for required 
}
Symbol $\star=(\star_1,\ldots,\star_N)$ with $\star_k\in\{0,+,-\}$, $k=1,\ldots,N$, in $I^{|m|}_{\Omega^\star}$ indicates ordering of $I^0_{\Omega_k^0}$ and $I_{\Omega_k^+}^{m_k}$ with $I_{\Omega_k^-}^{m_k}$ in a combination of integration operators \eqref{left} and/or \eqref{right} producing an $I^{|m|}_{\Omega^\star}$, where $\Omega^\star:=\Omega_1^{\star_1}\times\ldots\times\Omega_N^{\star_N}$ with $\Omega_k^0\equiv\mathbb{R}$. We put $\star_k=0$, $\star_k=+$ and $\star_k=-$ for $I^0_{\Omega_k^0}$, $I_{\Omega_k^+}^{m_k}$ and $I_{\Omega_k^-}^{m_k}$, respectively. Obviously, $m_k$, $k=1,\ldots,N$, in $m=(m_1,\ldots,m_N)$ must correspond to the orders of operators $I^0_{\Omega_k^0}$, $I_{\Omega_k^+}^{m_k}$ and $I_{\Omega_k^-}^{m_k}$ standing at related places, where we set $m_k=0$ for $I^0_{\Omega_k^0}$. For simplicity, we assume $f(x_{x_l})\equiv 0$ for $x_l\in(-\infty,c_l)$ and/or $f(x_{x_n})\equiv 0$ for $x_n\in(c_n,+\infty)$.
Say, in the case $N=2$, the following mappings come to our consideration:
\begin{gather}
I_{\Omega^{(+,+)}}^{|m|}f({x})=I_{\Omega_1^+}^{m_1}I_{\Omega_2^+}^{m_2} f({x})=\frac{1}{\Gamma(m
_1)\Gamma(m
_2)}\int_{c_1}^{x_1}\int_{c_2}^{x_2}\frac{f(y)\,dy_2\,dy_1}{(x_2-y_2)^{1-m_2}(x_1-y_1)^{1-m_1}}\quad(x\in\Omega_1^+\times\Omega_2^+),
\nonumber\\
I_{\Omega^{(-,-)}}^{|m|}f({x})=I_{\Omega_1^-}^{m_1}I_{\Omega_2^-}^{m_2} f({x})=\frac{1}{\Gamma(m
_1)\Gamma(m
_2)}\int_{x_1}^{c_1}\int_{x_2}^{c_2}\frac{f(y)\,dy_2\,dy_1}{(y_2-x_2)^{1-m_2}(y_1-x_1)^{1-m_1}}\quad(x\in\Omega_1^-\times\Omega_2^-),\nonumber\\
I_{\Omega^{(+,-)}}^{|m|}f({x})=I_{\Omega_1^+}^{m_1}I_{\Omega_2^-}^{m_2} f({x})=\frac{1}{\Gamma(m_1)\Gamma(m_2)}\int_{c_1}^{x_1}\int_{x_2}^{c_2}\frac{f(y)\,dy_2\,dy_1}{(y_2-x_2)^{1-m_2}(x_1-y_1)^{1-m_1}}\quad(x\in\Omega_1^+\times\Omega_2^-),\nonumber\\
I_{\Omega^{(-,+)}}^{|m|}f({x})=I_{\Omega_1^-}^{m_1}I_{\Omega_2^+}^{m_2} f({x})=\frac{1}{\Gamma(m_1)\Gamma(m_2)}\int_{x_1}^{c_1}\int_{c_2}^{x_2}\frac{f(y)\,dy_2\,dy_1}{(x_2-y_2)^{1-m_2}(y_1-x_1)^{1-m_1}}\quad(x\in\Omega_1^-\times\Omega_2^+),\nonumber\\\label{starEx}
I_{\Omega^{(\pm,0)}}^{|m|} f(x_1,y_2)=I_{\Omega_1^\pm}^{m_1} I^0_{\Omega_2^0}f(x_1,y_2)\qquad (y_{x_1}\in\Omega_1^\pm\times\mathbb{R}),\\
I_{\Omega^{(0,\pm)}}^{|m|} f(y_1,x_2)=I^0_{\Omega_1^0}I_{\Omega_2^\pm}^{m_2} f(y_1,x_2)\qquad (y_{x_2}\in\mathbb{R}\times\Omega_2^\pm).\nonumber
\end{gather}

Definitions of function spaces $B_{pq}^{s,w}(\mathbb{R}^N)$ and $F_{pq}^{s,w}(\mathbb{R}^N)$ with local Muckenhoupt weights $w$ are collected in \S\,\ref{WFS}.
In \S\,\ref{Emb} we find relations between norms of images $I_{\Omega^\star}^{|m|}f$ of integration operators $I_{\Omega^\star}^{|m|}$ and their pre--images $f$ in $B_{pq}^{s,w}(\mathbb{R}^N)$ and $F_{pq}^{s,w}(\mathbb{R}^N)$, and apply these results in \S\,\ref{EnAp} to estimates of the entropy and approximation numbers of some integration operators (see definitions of the numbers in \S\,\ref{EnAp}).

Connections between images and pre--images of integration (differentiation) operators in $B_{pq}^{s,w}(\mathbb{R}^N)$ and $F_{pq}^{s,w}(\mathbb{R}^N)$ have been studied in \cite[Theorem 2.3.8]{Tr1}, \cite{N}, \cite[Theorem 2.20]{R}, \cite[\S\,4]{IS}, \cite[p.\,23]{Tr6}.

Let $D^m$ stand for derivatives. To simplify notations, we write $A_{pq}^{s,w}(\mathbb{R}^N)$ instead of $B_{pq}^{s,w}(\mathbb{R}^N)$ or $F_{pq}^{s,w}(\mathbb{R}^N)$. It follows from a modification of \cite[Theorem 2.20]{R} (see also \cite[Prop. 4.2]{IS}) of the form \begin{equation}\label{DifInt}\|D^m \mathscr{F}\|_{A_{pq}^{s-|m|,w}(\mathbb{R}^N)}\lesssim \|\mathscr{F}\|_{A_{pq}^{s,w}(\mathbb{R}^N)}\qquad (s\in\mathbb{N}) \end{equation} that, in particular, $\mathscr{F}\in A_{pq}^{s,w}(\mathbb{R}^N)$ yields $D^m\mathscr{F}\in A_{pq}^{s-|m|,w}(\mathbb{R}^N)$, where $\mathscr{F}$ denotes the image of an $I_{\Omega^\star}^{|m|}$. The main result of this work is a type of reverse implications for such a particular case of $\mathscr{F}$ 
\begin{equation}\label{DifInt'}\|\mathscr{F}\|_{B_{pq}^{s,w}(\mathbb{R}^N)}\lesssim C_{u,w}\|D^m \mathscr{F}\|_{B_{pq}^{s+|m|,u}(\mathbb{R}^N)} \end{equation}
in Besov spaces (see Theorems \ref{ImagesA}, \ref{ImagesB} and \ref{ImagesComb}). From them, analogous estimates in Triebel--Lizorkin spaces can be obtained by the chain of embeddings $B_{pq_\ast}^{s,v}(\mathbb{R}^N)\hookrightarrow F_{pq}^{s,v}(\mathbb{R}^N) \hookrightarrow B_{pq^\ast}^{s,v}(\mathbb{R}^N)$ for $v=u$ or $v=w$, where $q_\ast\le\min\{p,q\}\le\max\{p,q\}\le q^\ast$. Observe that \eqref{DifInt} was performed in \cite{R} for $A_{pq}^{s,w}(\mathbb{R}^N)$ and $A_{pq}^{s-|m|,w}(\mathbb{R}^N)$ with natural $s$ on test functions $\mathscr{F}$. {{In \S\,\ref{Emb} we give shortly a proof of this inequality % for the function spaces with Muckenhoupt weights $w$.% given in terms of integration operators.
%We also
 %\eqref{DifInt}
for $I_{\Omega^\star}^{|m|}f$ belonging to $A_{pq}^{s,w}(\mathbb{R}^N)$ with arbitrary $s\in\mathbb{R}$.} The proof method, we use for this, is valid for arbitrary $\mathscr{F}\in A_{pq}^{s,w}(\mathbb{R}^N)$. Our results concerning the reverse implications \eqref{DifInt'} infers exploiting characteristics for weighted discrete Hardy inequalities described in settings of general measures in e.g. \cite[\S\,1]{PSU}.}

Instruments of our work are properties of $B-$ splines and \cite[Theorem 4.10]{RMC*}, we remind this material in \S\S\,\ref{BL}--\ref{Decom}. Theorem 4.10 in \cite{RMC*} uses localised spline wavelet bases of Battle--Lemari\'{e} type, which elements have explicit forms. In \S\,\ref{BL} we recall their exact formulae as we need them for establishing our main result.

Throughout the paper relations of the type $A\lesssim B$ mean that $A\le cB$ with
some constant $0<c<\infty$ depending, possibly, on number parameters. We write
$A\approx B$ instead of $A\lesssim B \lesssim A$ and $A\simeq B$ instead of $A=cB$. We stand $\mathbb Z$, $\mathbb N$ and $\mathbb R$ for integers, natural and real numbers, respectively, and $\mathbb{C}$ for the complex plane. By $\mathbb{N}_0$ we denote the set $\mathbb{N}\cup\{0\}$, and by $\mathbb{R}^N$, $N\in\mathbb{N}$, --- the Euclidean $N-$space.
The symbol $dx$ stands for the $N-$dimensional Lebesgue measure, $\Gamma(\cdot)$ --- for the Gamma function, %on $\mathbb{R}\setminus\{0,-1,-2,\ldots\}$.
 $[s]$ --- for the integer part of $s\in\mathbb{R}$.
We put $r':=%\frac
{r}/({r-1})$ if $0<r<\infty$ and $r'=1$ for $r=\infty$. %The symbol $\hookrightarrow$ will be used for continuous embeddings.
We say that $f\in L_1^\mathrm{loc}(\mathbb{R}^N)$ if $f\in L_1(\Omega)$ for every compact subset $\Omega$ of $\mathbb{R}^N$. Marks $:=$ and
$=:$ are used for introducing new quantities. We abbreviate $h(\Omega):=\int_\Omega h(x)\, dx$,
where {$\Omega \subset\mathbb{R}$} is some bounded measurable set.

\section{Weighted function spaces}\label{WFS}
\subsection{Local Muckenhoupt weights $\mathscr{A}_\infty^\loc$} A weight $w$ is a locally  integrable function on $\mathbb{R}^N$, positive almost everywhere (a.e.).

Let $Q$ be a cube $Q\subset\mathbb{R}^N$ with sides parallel to the coordinate axes, and $|Q|$ stands for its volume.

Let $L_r(\mathbb{R}^N)$ with $0<r\le\infty$ denote the Lebesgue space of all measurable functions $f$ on $\mathbb{R}^N$ quasi--normed by
$\|f\|_{L_r(\mathbb{R}^N)}:=\bigl(\int_{\mathbb{R}^N}|f(x)|^r\,dx\bigr)^{{1}/{r}}$ with the usual modification if $r=\infty$.

\begin{definition}\label{defin}{\rm (\cite[\S~1.1]{R})\\ (i) A weight $w$ belongs to the class $\mathscr{A}_p^\loc$, $1<p<\infty$, of \textit{local Muckenhoupt weights} if \begin{equation}\label{Mup'}
\mathscr{A}_p^\loc(w):=\sup_{|Q|\le 1}\frac{w(Q)}{|Q|}
\biggl(\frac{1}{|Q|}\int_{Q} w^{1-p'}\biggr)^{\frac{p}{p'}}<\infty;\end{equation}\\
(ii) $w\in\mathscr{A}_1^\loc$ if \begin{equation}\label{Mu1'}
\mathscr{A}_1^\loc(w):=\sup_{|Q|\le 1}\frac{w(Q)}{|Q|}\,\|1/w\|_{L_\infty(Q)}<\infty;\end{equation}\\
(iii) we say that $w\in\mathscr{A}_\infty^\loc$ if $w\in \mathscr{A}_p^\loc$ for some $1\le p<\infty$, that is $\mathscr{A}_\infty^\loc:=\bigcup_{p\ge 1}\mathscr{A}_p^\loc.$
}\end{definition} Suprema in \eqref{Mup'} and \eqref{Mu1'} are taken over all cubes $Q\subset\mathbb{R}^N$ with $|Q|\le 1$. The class $\mathscr{A}_\infty^\loc$ is stable with respect to translation, dilation and multiplication {by} a positive scalar.
Besides,\\ --- if $w\in\mathscr{A}_p^\loc$, $1<p<\infty$, then $w^{-p'/p}\in\mathscr{A}_{p'}^\loc$;\\
--- if $w\in\mathscr{A}_p^\loc$, $1<p\le\infty$, then there exists constant $c_{w,N}>0$ such that the inequality \begin{equation}\label{delta2}w(Q_t)\le \exp(c_{w,N}\, t) w(Q)\end{equation} holds for all cubes $Q$ with $|Q|=1$ and their concentric cubes $Q_t$ with side lengths $l(Q)$ and $l(Q_t)$, respectively, satisfying the relation $l(Q_t)=t\cdot l(Q)$ for $t\ge 1$;\\ --- if $1<p_1<p_2\le\infty$ then $\mathscr{A}_{p_1}^\loc\subset \mathscr{A}_{p_2}^\loc$;\\ --- if $w\in\mathscr{A}_p^\loc$ then there exists some number $r<p$ such that $w\in\mathscr{A}_r^\loc$.

The last property {emerges} in the following definition of the special number \begin{equation}\label{r_w}r_0:= \inf\{r\ge 1\colon w\in\mathscr{A}_r^\loc\}<\infty, \qquad w\in\mathscr{A}_\infty^\loc.\end{equation} Definition \ref{defin}(iii) implies that if $\mathscr{A}_{\infty}^\loc$ then $w\in\mathscr{A}_{p}^\loc$ for some $p<\infty$. 

One of the most prominent examples of a local Muckenhoupt weight is the following:
$$\mathscr{A}_p^\loc\ni w(x)=\begin{cases}|x|^\alpha, & |x|\le 1,\\
\exp(|x|-1), & |x|>1,\end{cases}\quad\textrm{where}\quad \begin{cases} -N<\alpha<N(p-1), & p>1,\\ -N<\alpha\le 0, &p=1.\end{cases}$$
\noindent More information and examples of local Muckenhoupt weights can be found in \cite{R} and \cite{Ma,WM}.

\subsection{Function spaces $B_{p,q}^{s,w}(\mathbb{R}^N)$ and $F_{p,q}^{s.w}(\mathbb{R}^N)$ with $w\in\mathscr{A}_\infty^\loc$}\label{E} For $0<p<\infty$ and a weight $w$ on $\mathbb{R}^N$ we denote $L_p(\mathbb{R}^N,w)$ the weighted Lebesgue space quasi--normed by $\|f\|_{L_p(\mathbb{R}^N,w)}:=\|w^{1/p}f\|_{L_p(\mathbb{R}^N)}$ with usual modification in the case $p=\infty$. Let $\mathscr{D}(\mathbb{R}^N)$ denote the space of
all compactly supported $C^\infty(\mathbb{R}^N)$ functions equipped with the usual topology.

For the definitions of the unweighted Besov $B_{pq}^s(\mathbb{R}^N)$ and Triebel--Lizorkin $F_{pq}^s(\mathbb{R}^N)$ spaces we refer to \cite{Tr1, Tr2}. To incorporate the $\mathscr{A}_\infty^\loc$ class of weights into the theory of weighted function spaces V.S. Rychkov exploited a class $\mathscr{S}'_e(\mathbb{R}^N)$ of distributions (see \cite{R} and e.g. \cite{Sch,Sch'}).
\begin{definition}{\rm
The space
$\mathscr{S}_e(\mathbb{R}^N)$ of test functions consists of all
$C^\infty(\mathbb{R}^N)$ functions ${\varphi}$ satisfying $${q}_{\mathbf{N}}({\varphi}):=\sup_{x\in\mathbb{R}^N}\mathrm{e}^{\mathbf{N}|x|}\sum_{|\gamma|\le \mathbf{N}}|D^\gamma{\varphi}(x)|<\infty\qquad\textrm{for all}\quad\mathbf{N}\in\mathbb{N}_0.$$ }\end{definition} The space
$\mathscr{S}_e(\mathbb{R}^N)$ is equipped with the locally convex topology defined by the system of the semi--norms ${q}_\mathbf{N}$. The set $\mathscr{S}_e(\mathbb{R}^N)$ is a complete locally convex space. It holds $\mathscr{D}(\mathbb{R}^N)\hookrightarrow\mathscr{S}_e(\mathbb{R}^N)$, moreover, the space $\mathscr{D}(\mathbb{R}^N)$ is dense in $\mathscr{S}_e(\mathbb{R}^N)$. If $w\in \mathscr{A}_\infty^\loc$ then $\mathscr{S}_e(\mathbb{R}^N)\hookrightarrow L_p(\mathbb{R}^N,w)$ for any $0<p<\infty$ (see \cite{Sch}, \cite{R} and \cite[\S~3]{Ma}).

By $\mathscr{S}'_e(\mathbb{R}^N)$ we denote the strong dual space of  $\mathscr{S}_e(\mathbb{R}^N)$. The class $\mathscr{S}'_e(\mathbb{R}^N)$ can be identified with a subset of the collection $\mathscr{D}'(\mathbb{R}^N)$ of all distributions $f$ on $\mathscr{D}(\mathbb{R}^N)$ for which the estimate $$|\langle f,{\varphi}\rangle|\le C\sup\Bigl\{\bigl|D^\gamma {\varphi}(x)\bigr|\mathrm{e}^{\mathbf{N}|x|}\colon x\in\mathbb{R}^N,\,|\gamma|\le \mathbf{N}\Bigr\}$$ holds for all ${\varphi}\in\mathscr{D}(\mathbb{R}^N)$ with some constants $C$ and $\mathbf{N}$ depending of $f$. Such a distribution $f$ can be extended to a continuous functional on $\mathscr{S}_e(\mathbb{R}^N)$.

For ${\varphi},{\psi}\in\mathscr{S}_e(\mathbb{R}^N)$ their convolution of the form $${\varphi}\ast{\psi}(x)=:\int_{\mathbb{R}^N}{\varphi}(x-y){\psi}(y)\,dy,\quad x\in\mathbb{R}^N,$$
belongs to $\mathscr{S}_e(\mathbb{R}^N)$. Besides, for $f\in\mathscr{S}'_e(\mathbb{R}^N)$ and ${\varphi}\in\mathscr{S}_e(\mathbb{R}^N)$ the related convolution $$f\ast{\varphi}(x)=:\langle f,{\varphi}(x-\cdot)\rangle,\quad x\in\mathbb{R}^N,$$
is a $C^\infty(\mathbb{R}^N)$ function of at most exponential growth.

Let a function ${\varphi}_0\in\mathscr{D}(\mathbb{R}^N)$ be such that \begin{equation}\label{Iphi1}\int_{\mathbb{R}^N}{\varphi}_0(x)\,dx\not=0.\end{equation} Put
\begin{equation}\label{Iphidef}{\varphi}(x)={\varphi}_0(x)-2^{-N}{\varphi}_0(x/2)\end{equation} and let ${\varphi}_d(x):=2^{(d-1)N}{\varphi}(2^{d-1}x)$ for $d\in\mathbb{N}$. One can find ${\varphi}_0$ such that \begin{equation}\label{Iphi0}\int_{\mathbb{R}^N}x^\gamma{\varphi}(x)\,dx=0\end{equation} for any multi--index $\gamma\in\mathbb{N}^N_0$, $|\gamma|\le \Gamma$, where $\Gamma\in\mathbb{N}$ is fixed. We write $\Gamma=-1$ if \eqref{Iphi0} does not hold.

\begin{definition}\label{Def2}{\rm (\cite{R})
Let $0<p<\infty$, $0<q\le\infty$, $s\in \mathbb{R}$ and $w\in\mathscr{A}_\infty^\loc$. Let a function ${\varphi}_0\in\mathscr{D}(\mathbb{R}^N)$ satisfy \eqref{Iphi1} and ${\varphi}$ of the form \eqref{Iphidef} satisfy \eqref{Iphi0} with $|\gamma|\le \Gamma$, where $\Gamma\ge [s]$. We define\\
(i) the weighted Besov space $B_{pq}^{s,w}(\mathbb{R}^N)$ to be the set of all $f \in \mathscr{S}'_e(\mathbb{R}^N)$ such that the quasi--norm
\begin{equation}\label{Bspq}
  \|f\|_{B_{pq}^{s,w}(\mathbb{R}^N)}: =\biggl(\sum_{d=0}^\infty 2^{dsq}\bigl\| {\varphi}_d\ast f\bigr\|_{L_p(\mathbb{R}^N,w)}^q\biggr)^{\frac{1}{q}}
\end{equation}
(with the usual modification if $q=\infty$) is finite; and\\
(ii) the weighted Triebel--Lizorkin space $F_{pq}^{s,w}(\mathbb{R}^N)$ as the set of all $f \in \mathscr{S}'_e(\mathbb{R}^N)$ with the finite quasi--norm
\begin{equation}\label{Fspq}
  \|f\|_{F_{pq}^{s,w}(\mathbb{R}^N)} =\biggl\|\biggl(\sum_{d=0}^\infty 2^{dsq} \bigl|{\varphi}_d\ast f\bigr|^q\biggr)^{\frac{1}{q}} \biggr\|_{L_p(\mathbb{R}^N,w)}
\end{equation}
(usually modified in the case $q=\infty$).
}
\end{definition}

Definitions of the above spaces independent of the choice of ${\varphi}_0$, up to equivalence of quasi--norms. 

To simplify the notation we write $A^{s,w}_{pq}(\mathbb{R}^N)$ instead of $B_{pq}^{s,w}(\mathbb{R}^N)$ or $F_{pq}^{s,w}(\mathbb{R}^N)$. The spaces $A^{s,w}_{pq}(\mathbb{R}^N)$ have properties similar to the unweighted $B^{s}_{pq}(\mathbb{R}^N)$ and $F^{s}_{pq}(\mathbb{R}^N)$ and to other distribution spaces with less general weights than $\mathscr{A}_\infty^\loc$ (see \cite{IS,Ma,R,WBan,WM} for details and other properties of $A_{pq}^{s,w}(\mathbb{R}^N)$).

\section{Spline wavelet bases of Battle--Lemari\'{e} type} \label{BL}

For a function $g\in L_1(\mathbb{R}^N)$ its Fourier transform has the form
\begin{equation} \label{FourierS}
\widehat{g}(\omega)=(2\pi)^{-N/2}\int_{\mathbb{R}^N}\mathrm{e}^{-i\omega x}g(x)\,dx, \qquad \omega\in\mathbb{R}^N.
\end{equation}

Put $B_0=\chi_{[0,1)}$ and define B--spline $B_n$ of order $n\in\mathbb{N}$ \begin{equation}\label{Bndef}
B_n(x):=(B_{n-1}\ast B_0)(x)=\int_0^1 B_{n-1}(x-t)\,dt=\frac{x}{n}B_{n-1}(x)
+\frac{n+1-x}{n}B_{n-1}(x-1).
\end{equation} {It is known \cite{Chui} that $B_n$ is continuous and $n-$times a.e. differentiable function on $\mathbb{R}$} {with ${\rm supp}\,B_n=[0,n+1]$. Besides, $B_n(x)>0$ for all $x\in(0,n+1)$ and the restriction of $B_n$ to each $[m,m+1]$,} {$m=0,\ldots,n$, is a polynomial of degree $n$. The function $B_n(x)$ is symmetric about $x=(n+1)/2$.}

\label{MRA} Let $\mathscr{V}_d$, $d\in\mathbb{Z}$, denote the $L_2(\mathbb{R})-$closure of the linear span of the system $\bigl\{B_{n}(2^d\cdot-\tau)\colon \tau\in\mathbb{Z}\bigr\}$. The spline spaces $\mathscr{V}_d$, $d\in\mathbb{Z}$, constitute multiresolution analysis of $L_2(\mathbb{R})$ \cite{Chui,W} in the sense that \begin{itemize}
\item[{\rm (i)}] $\ldots\subset \mathscr{V}_{-1}\subset \mathscr{V}_0\subset \mathscr{V}_1\subset\ldots$;
\item[{\rm (ii)}] $\mathrm{clos}_{L_2(\mathbb{R})}\Bigl(\bigcup_{d\in\mathbb{Z}} \mathscr{V}_d\Bigr)=L_2(\mathbb{R})$;
\item[{\rm (iii)}] $\bigcap_{d\in\mathbb{Z}} \mathscr{V}_d=\{0\}$;
\item[{\rm (iv)}] for each $d$ the $\bigl\{B_{n}(2^d\cdot-\tau)\colon\tau\in\mathbb{Z}\bigr\}$ is an unconditional (but not orthonormal) basis of $\mathscr{V}_d$.\end{itemize}
Further, there are the orthogonal complementary subspaces $\ldots, \mathscr{W}_{-1},\mathscr{W}_0,\mathscr{W}_1,\ldots$ such that
\begin{itemize}
\item[{\rm (v)}] $\mathscr{V}_{d+1}=\mathscr{V}_d\oplus \mathscr{W}_d$ for all $d\in\mathbb{Z}$,
where $\oplus$ stands for $\mathscr{V}_d\perp \mathscr{W}_d$ and $\mathscr{V}_{d+1}=\mathscr{V}_d+\mathscr{W}_d$.
\end{itemize} Wavelet subspaces $\mathscr{W}_d$, $d\in\mathbb{Z}$, related to the spline $B_n$, are also generated by some basis functions ({\it wavelets}) in the same manner as the spline spaces $\mathscr{V}_d$, $d\in\mathbb{Z}$, are generated by the spline $B_n$.
The whole structure above means that {for any fixed} $\boldsymbol{k}\in\mathbb{Z}$ the system $\bigl\{B_{n,\boldsymbol{k}}(\cdot-\tau):=B_n(\cdot-\boldsymbol{k}-\tau)\colon \tau\in\mathbb{Z}\bigr\}$ generates multiresolution analysis $\mathrm{MRA}_{B_{n,\boldsymbol{k}}}$ of $L_2(\mathbb{R})$, and $\mathrm{MRA}_{B_{n,\boldsymbol{k}}}=\mathrm{MRA}_{B_n}$ for any $\boldsymbol{k}\in\mathbb{Z}$.

\smallskip
Fix $n\in\mathbb{N}$. For each $j=1,\ldots, n$ we introduce some $\alpha_{j}(n)>1$ and define $r_{j}(n):=(2\alpha_j(n)-1)-2\sqrt{\alpha_j(n)(\alpha_j(n)-1)}\in(0,1)$. The collection
$\alpha_j(n)$, $j=1,\ldots,n$, and, respectively, the set of numbers $r_j(n)$, $j=1,\ldots,n$, are uniquely defined {dependent on} $n$ (see \cite[\S\,2, p. 179]{JMAA}) so that the sequence $\bigl\{-r_j(n)\bigr\}_{j=1}^n$ is formed by the roots of Euler's polynomial \begin{equation}\label{ortho}\mathbb{P}_{n}(\omega):=\sum_{m\in\mathbb{Z}}\Bigl|\widehat{B}_n(\omega+2\pi m)\Bigr|^2=\frac{1}{2^{2n}{\alpha_1(n)t_1(n)\ldots\alpha_n(n)t_n(n)}}\bigl|1+\mathrm{e}^{\pm i\omega}t_1(n)\bigr|^{2}\ldots\bigl|1+\mathrm{e}^{\pm i\omega}t_n(n)\bigr|^{2},\end{equation}
where $t_j(n)\in\{r_j(n),1/r_j(n)\}$, $j=1,\ldots,n$. It holds $\mathbb{P}_{n,\boldsymbol{k}}(\omega):=\sum_{m\in\mathbb{Z}}\Bigl|\widehat{B}_{n,\boldsymbol{k}}(\omega+2\pi m)\Bigr|^2=\mathbb{P}_{n}(\omega)$.

Orthogonalisation process of $B$--splines \eqref{Bndef} results in other scaling functions than $B_n$, named after G. Battle \cite{B,B1} and P.G. Lemarie--Rieusset \cite{L}, whose integer translations form an orthonormal system within the multiresolution analysis of $L_2(\mathbb{R})$ generated by $B_{n,\boldsymbol{k}}$.
Relying on \eqref{ortho}, we define the $n-$th order Battle--Lemari\'{e} scaling function $\phi_{t_1,\ldots,t_n;\boldsymbol{k}}$ via its Fourier transform as follows (see \cite[\S\,3.2]{RMC}):
\begin{equation}\label{phi_nn}
\widehat{\phi}_{t_1,\ldots,t_n;\boldsymbol{k}}(\omega):=\frac{2^n\sqrt{\alpha_1(n)\,t_1(n)\ldots\alpha_n(n)\,t_n(n)}\,\widehat{B}_{n,\boldsymbol{k}}(\omega)}{(1+\mathrm{e}^{i\omega}t_1(n))\ldots
(1+\mathrm{e}^{i\omega}t_n(n))}.\end{equation}
One can choose scaling function $\phi_{t_1,\ldots,t_n;\boldsymbol{k}}$ with $t_j(n)=r_j(n)$ or $t_j(n)=r_j^{-1}(n)=1/r_j(n)$ for any $j=1,\ldots,n$. The parameter $\boldsymbol{k}\in\mathbb{Z}$ is fixed. It allows to start the construction of $\mathrm{MRA}_{B_{n,\boldsymbol{k}}}=\mathrm{MRA}_{{\phi}_{t_1,\ldots,t_n;\boldsymbol{k}}}$ with ${B}_{n,\boldsymbol{k}}$ centred at $(n+1)/2+\boldsymbol{k}$ for any $\boldsymbol{k}\in\mathbb{Z}$.

We say that $j\in J_r$ (or, alternatively, that $j\in J_{1/r}$), where $j\in\{1,\ldots,n\}$, if $t_j(n)=r_j(n)$ (or $t_j(n)=1/r_j(n)$). Let $c_r$ and $c_{1/r}$ denote cardinalities of the sets $J_{r}\subseteq \{1,\ldots,n\}$ and $J_{1/r}=\{1,\ldots,n\}\setminus J_r$, respectively.
Denote $\beta_n:=2^n\sqrt{\alpha_1(n)\,r_1(n)\ldots\alpha_n(n)\,r_n(n)}$ and observe that
for any $0<r<1$ \begin{equation}\label{ryadok}({\mathrm{e}^{\pm i\omega}r+1})^{-1}=\sum_{l=0}^\infty \left(-r\,\mathrm{e}^{\pm i\omega}\right)^l.\end{equation} Then, it follows from \eqref{phi_nn} and \eqref{ryadok}, that
\begin{multline}\label{ex2}\widehat{\phi}_{t_1,\ldots,t_n;\boldsymbol{k}}(\omega)=\frac{\beta_n\ \widehat{B}_{n,\boldsymbol{k}}(\omega)\,
\mathrm{e}^{- c_{1/r}i\omega}}{\bigl[\prod_{j\in J_r} (1+\mathrm{e}^{i\omega}r_j(n))\bigr]\bigl[\prod_{\iota\in J_{1/r}} (1+\mathrm{e}^{-i\omega}r_\iota(n))\bigr]}\\=
\beta_n \prod_{j\in J_r}\sum_{l_j=0}^\infty \bigl(-r_j(n)\,\mathrm{e}^{i\omega}\bigr)^{l_j}\
\prod_{\iota\in J_{1/r}}\sum_{l_\iota=0}^\infty (-r_\iota(n)\,\mathrm{e}^{-i\omega})^{l_\iota}\,\widehat{B}_{n,\boldsymbol{k}}(\omega)\ \mathrm{e}^{-c_{1/r}i\omega}.\end{multline}

The Fourier transform of a wavelet ${\psi}_{t_1,\ldots,t_n;\boldsymbol{k},\boldsymbol{s}}$, $\boldsymbol{s}\in\mathbb{Z}$, related to ${\phi}_{t_1,\ldots,t_n;\boldsymbol{k}}$ has the form
\begin{multline}\label{Npsi_n}\widehat{\psi}_{t_1,\ldots,t_n;\boldsymbol{k},\boldsymbol{s}}(\omega)=\frac{\sqrt{\alpha_1(n)\,t_1(n)\ldots\alpha_n(n)\,t_n(n)}\,\mathrm{e}^{-i\omega\boldsymbol{s}}}{2\,\mathrm{e}^{i\omega/2}\,\mathrm{e}^{i\pi(n+1+\boldsymbol{k})}}\\ \times\frac{\Bigl[\bigl(1-\mathrm{e}^{-i\omega/2}t_1(n)\bigr)\ldots \bigl(1-\mathrm{e}^{-i\omega/2}t_n(n)\bigr)\Bigr]\,\bigl({\mathrm{e}^{i\omega/2}-1}\bigr)^{n+1}\,\widehat{B}_{n}(\omega/2)}{\bigl(1+\mathrm{e}^{-i\omega}t_1(n)\bigr)\bigl(1+\mathrm{e}^{i\omega/2}t_1(n)\bigr)\ldots \bigl(1+\mathrm{e}^{-i\omega}t_n(n)\bigr)
\bigl(1+\mathrm{e}^{i\omega/2}t_n(n)\bigr)}\end{multline} (see \cite[\S\,2]{JMAA} for the case $t_j(n)=r_j(n)$ for all $j=1,\ldots,n$ or \cite[\S\,3.2]{RMC} for general situation), where
$$\mathrm{e}^{-i\omega/2}\bigl(\mathrm{e}^{i\omega/2}-1\bigr)^{n+1}=\sum_{k=0}^{n+1}\frac{(-1)^k(n+1)!}{k!(n+1-k)!}\mathrm{e}^{(n-k)i\omega/2}$$ together with \eqref{diff'} %and the fact that ${\rm supp}\,B_n=[0,n+1]$
ensures %(see also \cite[\S~3.7, Proposition 4]{Me} or \cite[\S~2.5]{Tr5})
fulfilment of the cancellation property  \begin{equation}\label{zero}\int_\mathbb{R} x^m \,{\psi}_{t_1,\ldots,t_n}(x)\,dx=0,\qquad(m=0,1,\ldots,n).\end{equation} Similarly to the situation with $\boldsymbol{k}$, the parameter $\boldsymbol{s}\in\mathbb{Z}$ in \eqref{Npsi_n} is also fixed. It makes possible the positioning  ${\psi}_{t_1,\ldots,t_n;\boldsymbol{k},\boldsymbol{s}}$ at a particular point on $\mathbb{R}$ (see e.g. \cite[p. 25]{RMC}).

Multiplying the numerator and denominator in \eqref{Npsi_n} by $\bigl(1-\mathrm{e}^{i\omega/2}t_1(n)\bigr)\ldots \bigl(1-\mathrm{e}^{i\omega/2}t_n(n)\bigr)$ we obtain
\begin{multline}\label{Npsi_nL}\widehat{\psi}_{t_1,\ldots,t_n;\boldsymbol{k},\boldsymbol{s}}(\omega)=\frac{\sqrt{\alpha_1(n)\,t_1(n)\ldots\alpha_n(n)\,t_n(n)}\,\mathrm{e}^{-i\omega\boldsymbol{s}}}{2\,\mathrm{e}^{i\omega/2}\,\mathrm{e}^{i\pi(n+1+\boldsymbol{k})}}\\\times
\frac{\Bigl[\bigl|1-\mathrm{e}^{i\omega/2}t_1(n)\bigr|^2\ldots \bigl|1-\mathrm{e}^{i\omega/2}t_n(n)\bigr|^2\Bigr]\bigl({\mathrm{e}^{i\omega/2}-1}\bigr)^{n+1}}{\bigl(1+\mathrm{e}^{-i\omega}t_1(n)\bigr)\bigl(1-\mathrm{e}^{i\omega}t_1^2(n)\bigr)\ldots \bigl(1+\mathrm{e}^{-i\omega}t_n(n)\bigr)
\bigl(1-\mathrm{e}^{i\omega}t_n^2(n)\bigr)}\ \widehat{B}_{n}(\omega/2).\end{multline} Denote $\rho_j:=\rho_j(n)=r_j(n)+1/r_j(n)$, $j=1,\ldots,n$.
Since
\begin{equation}\label{modul}
[1-\mathrm{e}^{- i\omega/2}t]\,[1-\mathrm{e}^{i\omega/2}t]=|1-\mathrm{e}^{i\omega/2}t|^2=t\Bigl[\bigl(t+1/t\bigr)-\bigl(\mathrm{e}^{i\omega/2}+\mathrm{e}^{-i\omega/2}\bigr)\Bigr]\qquad(t>0)
\end{equation} then \begin{equation*}%\label{cos}
\bigl|1-\mathrm{e}^{i\omega/2}t_1(n)\bigr|^2\ldots \bigl|1-\mathrm{e}^{i\omega/2}t_n(n)\bigr|^2=\bigl[t_1(n)\ldots t_n(n)\bigr]\sum_{j=0}^n(-1)^j\lambda_j\cdot\cos(j\omega/2),\end{equation*} where $\lambda_n=2$, $0<\lambda_j=\lambda_j(\rho_1,\ldots,\rho_n)$ for $j\not=n$ and $\lambda_j$ is even for all $j\not=0$. Taking {into} account that
\begin{equation}\label{e-perehod}\frac{1}{[1+\mathrm{e}^{- i\omega}/r] [1+\mathrm{e}^{i\omega/2}/r]}=
\frac{r^2\,\mathrm{e}^{i\omega/2}}{[1+\mathrm{e}^{i\omega}r] [1+\mathrm{e}^{-i\omega/2}r]}\qquad(r>0),\end{equation} we obtain for fixed $\boldsymbol{k},\boldsymbol{s}\in\mathbb{Z}$, in view of \eqref{ryadok}: \begin{multline}\label{Npsihat}\widehat{\psi}_{t_1,\ldots,t_n;\boldsymbol{k},\boldsymbol{s}}(w)=\frac{t_1(n)\ldots t_n(n)\sqrt{\alpha_1(n)t_1(n)\ldots\alpha_n(n)t_n(n)}}{2\cdot(-1)^{n+1+\boldsymbol{k}}\cdot\mathrm{e}^{ i\omega\boldsymbol{s}}}\\\times\biggl\{\sum_{j=0}^n\frac{\lambda_j}{2(-1)^j}\bigl[\mathrm{e}^{ji\omega/2}+
\mathrm{e}^{-ji\omega/2}\bigr]\biggr\}\sum_{k=0}^{n+1}\frac{(-1)^k(n+1)!}{k!(n+1-k)!}\mathrm{e}^{(n-k)i\omega/2}\\
\times \prod_{j\in J_r}\sum_{m_j=0}^\infty \bigl(-r_j(n)\,\mathrm{e}^{- i\omega}\bigr)^{m_j}\sum_{l_j=0}^\infty \bigl(r_j^2(n)\,\mathrm{e}^{ i\omega}\bigr)^{l_j}\
\prod_{\iota\in J_{1/r}}\frac{r_\iota^3(n)}{(-1)^{c_{1/r}}}\sum_{m_\iota= 0}^\infty \bigl(-r_\iota(n)\,\mathrm{e}^{ i\omega}\bigr)^{m_\iota}\sum_{l_\iota= 0}^\infty \bigl(r_\iota^2(n)\,\mathrm{e}^{- i\omega}\bigr)^{l_\iota}\ \widehat{B}_{n}(\omega/2) \\
=\frac{r_1(n)\ldots r_n(n)\sqrt{\alpha_1(n)r_1(n)\ldots\alpha_n(n)r_n(n)}}{2\cdot(-1)^{n+1+\boldsymbol{k}+c_{1/r}}\cdot\mathrm{e}^{i\omega\boldsymbol{s}}}\biggl\{\sum_{j=0}^n\frac{\lambda_j}{2(-1)^j}\bigl[\mathrm{e}^{ji\omega/2}+
\mathrm{e}^{-ji\omega/2}\bigr]\biggr\}\sum_{k=0}^{n+1}\frac{(-1)^k(n+1)!}{k!(n+1-k)!}\mathrm{e}^{(n-k)i\omega/2}\\
\times \prod_{j\in J_r}\sum_{m_j=0}^\infty \bigl(-r_j(n)\,\mathrm{e}^{- i\omega}\bigr)^{m_j}\sum_{l_j=0}^\infty \bigl(r_j^2(n)\,\mathrm{e}^{ i\omega}\bigr)^{l_j}\
\prod_{\iota\in J_{1/r}}\sum_{m_\iota= 0}^\infty \bigl(-r_\iota(n)\,\mathrm{e}^{ i\omega}\bigr)^{m_\iota}\sum_{l_\iota= 0}^\infty \bigl(r_\iota^2(n)\,\mathrm{e}^{- i\omega}\bigr)^{l_\iota}\ \widehat{B}_{n}(\omega/2).\end{multline}
Orthonormal spline wavelet systems $\{
{\phi}_{t_1,\ldots,t_n;\boldsymbol{k}}, {\psi}_{t_1,\ldots,t_n;\boldsymbol{k},\boldsymbol{s}}\}=:\{
{\phi}_{n,\boldsymbol{k}}, {\psi}_{n,\boldsymbol{k},\boldsymbol{s}}\}$ are from multiresolution analysis generated by $B_n$ for any $\boldsymbol{k},\boldsymbol{s}\in\mathbb{Z}$ and for any choice of $t_i(n)\in\{r_i(n),1/r_i(n)\}$, $i=1,\ldots,n$, in either ${\phi}_{t_1,\ldots,t_n;\boldsymbol{k}}$ or ${\psi}_{t_1,\ldots,t_n;\boldsymbol{k},\boldsymbol{s}}$, {independent of each other} \cite[\S\,3.2]{RMC}.
Substitution $x=\tilde{x}-1/2$ into the definition of $B_n$ leads to  another type of Battle--Lemari\'{e} wavelet systems of natural orders $\{
{\phi}_{n,\tilde{\boldsymbol{k}}}, {\psi}_{n,{\boldsymbol{k}},\tilde{\boldsymbol{s}}}\}$ with $\tilde{\boldsymbol{k}}=\boldsymbol{k}+1/2$ and $\tilde{\boldsymbol{s}}=\boldsymbol{s}+1/2$, which coincide with $\{
{\phi}_{n,\boldsymbol{k}}, {\psi}_{n,\boldsymbol{k},\boldsymbol{s}}\}$ shifted in $1/2$ to the right. These are from the multiresolution analysis generated by $\tilde{B}_n(x):=B_n(x-1/2)$.

\smallskip
Fix $n$ and choose $t_1(n),\ldots,t_n(n)$. Put ${\beta}_n:=2^n\sqrt{\alpha_1(n)r_1(n)\ldots\alpha_n(n)r_n(n)}$, ${\gamma}_n:=\bigl[r_1(n)\ldots r_n(n)\bigr]$ $\sqrt{\alpha_1(n)r_1(n)\ldots\alpha_n(n)r_n(n)}$ and $\mathscr{A}_n(\omega):=\mathbf{A}_n(\omega/2)\mathcal{A}_n(-\omega/2)=\bigl(1-\mathrm{e}^{i\omega}r_1^2(n)\bigr)\ldots\bigl
(1-\mathrm{e}^{i\omega}r_n^2(n)\bigr)$, where $$\mathbf{A}_n(\omega):=\bigl(1+\mathrm{e}^{i\omega}r_1(n)\bigr)\ldots\bigl
(1+\mathrm{e}^{i\omega}r_n(n)\bigr), \quad \mathcal{A}_n(\omega):=\overline{\mathbf{A}_n(\omega+\pi)}=\bigl(1-\mathrm{e}^{-i\omega}r_1(n)\bigr)\ldots\bigl
(1-\mathrm{e}^{-i\omega}r_n(n)\bigr).$$

{The functions ${\phi}_{t_1,\ldots,t_n;\boldsymbol{k}}=:{\phi}_{n,\boldsymbol{k}}$ and ${\psi}_{t_1,\ldots,t_n;\boldsymbol{k},\boldsymbol{s}}=:{\psi}_{n,\boldsymbol{k},\boldsymbol{s}}$ with
\begin{equation*}\label{phi_nn'}
\hat{\phi}_{n,\boldsymbol{k}}(\omega)=\beta_n\,\frac{\mathrm{e}^{-i\omega(\boldsymbol{k}+c_{1/r})}\,\hat{B}_{n}(\omega)}{\mathbf{A}_n(\omega)}\end{equation*} and
\begin{equation*}\hat{\psi}_{n,\boldsymbol{k},\boldsymbol{s}}(\omega)=\frac{\gamma_{n}(-1)^{n+1+\boldsymbol{k}+c_{1/r}}}{2 \,\mathrm{e}^{i\omega\boldsymbol{s}}}\cdot
\frac{\bigl|\mathcal{A}_n(\omega/2)\bigr|^2\displaystyle\sum_{k=0}^{n+1}\frac{(-1)^k(n+1)!}{k!(n+1-k)!}\,\mathrm{e}^{(n-k)i\omega/2}\,\hat{B}_{n}(\omega/2)}{\mathbf{A}_n(-\omega)\mathscr{A}_n(\omega)}\end{equation*}
have unbounded} {supports on $\mathbb{R}$ (see \eqref{ex2} and \eqref{Npsihat}). In what follows we shall operate with their localised versions instead (see e.g. \cite[\S\,3.2]{RMC*}):}
\begin{equation}\label{MMM0}\widehat{\mathbf{\Phi}}_{t_1,\ldots,t_n;\boldsymbol{k}}(\omega)=\widehat{\phi}_{t_1,\ldots, t_n;\boldsymbol{k}}(\omega)\mathbf{A}_n(\omega)=\beta_n\,\widehat{B}_{n,\boldsymbol{k}+c_{1/r}}(\omega)\end{equation} and 
\begin{multline}\label{MMM}\widehat{\mathbf{\Psi}}_{t_1,\ldots,t_n;\boldsymbol{m}(\Bbbk);\boldsymbol{k},\boldsymbol{s}}(\omega)=
\widehat{\psi}_{n,\boldsymbol{k},\boldsymbol{s}}(\omega){\mathbf{A}_n(-\omega)\mathscr{A}_n(\omega)}\,\bigl|\mathscr{A}_{\boldsymbol{m}}(\omega)\bigr|^{2\Bbbk}\\=
\frac{\gamma_{n}(-1)^{n+1+\boldsymbol{k}+c_{1/r}}}{2\,\mathrm{e}^{i\omega\boldsymbol{s}}}\,\bigl|\mathcal{A}_n(\omega/2)\bigr|^2\,\bigl|\mathscr{A}_{\boldsymbol{m}}(\omega)\bigr|^{2\Bbbk}\sum_{k=0}^{n+1}\frac{(-1)^k(n+1)!}{k!(n+1-k)!}
\mathrm{e}^{(n-k)i\omega/2}\,\widehat{B}_{n}(\omega/2).\end{multline} Here $\boldsymbol{m}(\Bbbk)=\boldsymbol{m}\in\mathbb{N}$ is some fixed number and $\Bbbk\in\{0,1\}$. If $\Bbbk=0$ then \begin{equation}\label{dop1}\widehat{\mathbf{\Psi}}_{t_1,\ldots,t_n;\boldsymbol{m}(0)=\emptyset;\boldsymbol{k},\boldsymbol{s}}=:\widehat{\mathbf{\Psi}}_{t_1,\ldots,t_n;\boldsymbol{k},\boldsymbol{s}}=(-1)^{c_{1/r}}\widehat{\psi}_{r_1,\ldots,r_n;\boldsymbol{k},\boldsymbol{s}}(\omega){\mathbf{A}_n(-\omega)\mathscr{A}_n(\omega)}\end{equation} (see \cite[\S~3.2.2]{RMC}). It was established in \cite[\S\,3.2]{RMC*} for the case when $t_j(n)=r_j(n)$, $j=1,\ldots,n$, that ${\mathbf{\Phi}}_{t_1,\ldots,t_n;\boldsymbol{k}}$ and ${\mathbf{\Psi}}_{t_1,\ldots,t_n;\boldsymbol{m}(\Bbbk);\boldsymbol{k},\boldsymbol{s}}$ are finite number linear combinations of integer shifts of ${{\phi}}_{n,\boldsymbol{k}}$ and ${{\psi}}_{n,\boldsymbol{k},\boldsymbol{s}}$, respectively. Algorithms for their constructing can be seen in \cite[\S\,3.2]{RMC*} or \cite{RMC}. The same is true for any choice of $t_j(n)=r_j^\pm(n)$, $j=1,\ldots,n$. 
It follows from \eqref{MMM0} and \eqref{MMM}{, respectively,} that
\begin{equation*}\label{dr}\mathbf{\Phi}_{t_1,\ldots,t_n;\boldsymbol{k}}(x)={\beta}_{n}\,B_{n,\boldsymbol{k}+c_{1/r}}(x),\end{equation*}
\begin{equation*}
\mathbf{\Psi}_{t_1,\ldots,t_n,;\boldsymbol{k},\boldsymbol{s}}(x)=\frac{\gamma_{n}(-1)^{n+1+\boldsymbol{k}+c_{1/r}}}{2}
\Bigl[
\sum_{j=0}^{n}\frac{\lambda_{j}}{2(-1)^{j}}
\bigl[B_{2n+1}^{(n+1)}\bigl(2(x-\boldsymbol{s})+n+j\bigr)+B_{2n+1}^{(n+1)}\bigl(2(x-\boldsymbol{s})+n-j\bigr)\Bigr],
\end{equation*} where $\lambda_n=2$, $0<\lambda_j=\lambda_j(r_1+1/r_1,\ldots,r_n+1/r_n)$ for $j\not=n$ and $\lambda_j$ is even for all $j\not=0$ (see \cite[\S\,3.1]{RMC*} for detail). 

It was established in \cite[\S\,3.2]{RMC*} that $\mathbf{\Phi}_{r_1,\ldots,r_n;\boldsymbol{k}}=\sum_{\kappa=0}^n\boldsymbol{\alpha}'_\kappa\cdot \phi_{r_1,\ldots,r_n;\boldsymbol{k}-\kappa}$, where
\begin{equation}\label{positive0}\sum_{\kappa=0}^n\boldsymbol{\alpha}'_\kappa=\bigl(1+r_1(n)\bigr)\ldots\bigl(1+r_n(n)\bigr)=:\mathbf{\Lambda}'_n>0,\end{equation}
and $\mathbf{\Psi}_{r_1,\ldots,r_n;\boldsymbol{m}(\Bbbk);\boldsymbol{k},\boldsymbol{s}}=\sum_{|\kappa|\le n+\boldsymbol{m}\Bbbk}\boldsymbol{\alpha}''_\kappa\cdot \psi_{r_1\ldots,r_n;\boldsymbol{k},\boldsymbol{s}+\kappa}$, where
\begin{multline}\label{positive}\sum_{|\kappa|\le n+\boldsymbol{m}\Bbbk}\boldsymbol{\alpha}''_\kappa=\Bigl[\bigl(1+r_1(n)\bigr)\bigl(1-r_1^2(n)\bigr)\ldots\bigl(1+r_n(n)\bigr)\bigl(1-r_n^2(n)\bigr)\Bigr]\\\times\Bigl[\bigl(1-r_1^2(\boldsymbol{m})\bigr)\ldots\bigl(1-r_m^2(\boldsymbol{m})\bigr)\Bigr]^{2\Bbbk}=:\mathbf{\Lambda}''_n>0\end{multline} (see \eqref{MMM0}, \eqref{MMM} and definitions of $\mathbf{A}_n(\omega)$ and $\mathscr{A}_n(\omega)$). The same is true for any choice of $t_j(n)$, $j=1,\ldots,n$. 
It holds that \begin{equation}\label{vazhno}\textrm{supp}\,\mathbf{\Phi}_{r_1,\ldots,r_n;\boldsymbol{k}}=[\boldsymbol{k},\boldsymbol{k}+n+1]\quad\textrm{and}\quad\textrm{supp}\,\mathbf{\Psi}_{r_1,\ldots,r_n;\boldsymbol{m}(\Bbbk);\boldsymbol{k},\boldsymbol{s}}=[\boldsymbol{s}-n/2-\boldsymbol{m}\Bbbk,\boldsymbol{s}+3n/2+\boldsymbol{m}\Bbbk+1].\end{equation} The functions $\mathbf{\Phi}_{t_1,\ldots,t_n;\boldsymbol{k}}$ and $\mathbf{\Psi}_{t_1,\ldots,t_n;\boldsymbol{m}(\Bbbk);\boldsymbol{k},\boldsymbol{s}}$ are finite linear combinations of integer shifts of $\phi_{t_1,\ldots,t_n;\boldsymbol{k}}$ and $\psi_{t_1,\ldots,t_n;\boldsymbol{k},\boldsymbol{s}}$, respectively, which are elements of the same orthonormal basis in $\textrm{MRA}_{B_n}$ of $L_2(\mathbb{R})$.
The system $\{\mathbf{\Phi}_{n,\boldsymbol{k}},\mathbf{\Psi}_{n,\boldsymbol{m},\boldsymbol{k},\boldsymbol{s}}\}$ forms a semi--orthogonal Riesz basis in $L_2(\mathbb{R})$ independently of the choice of $t_j(n)$, $j=1,\ldots,n$, in either $\mathbf{\Phi}_{n,\boldsymbol{k}}:=\mathbf{\Phi}_{t_1,\ldots,t_n;\boldsymbol{k}}$ or $\mathbf{\Psi}_{n,\boldsymbol{m},\boldsymbol{k},\boldsymbol{s}}:=\mathbf{\Psi}_{t_1,\ldots,t_n;m(\Bbbk);\boldsymbol{k},\boldsymbol{s}}$ (see \cite[\S\,3]{RMC*} for details).

Observe that localised systems $\{\mathbf{\Phi}_{n,\tilde{\boldsymbol{k}}},\mathbf{\Psi}_{n,\boldsymbol{m},\boldsymbol{k},\tilde{\boldsymbol{s}}}\}$ related to $\{{\phi}_{n,\tilde{\boldsymbol{k}}},{\psi}_{n,\boldsymbol{k},\tilde{\boldsymbol{s}}}\}$ with $\tilde{\boldsymbol{k}}=\boldsymbol{k}+1/2$ and $\tilde{\boldsymbol{s}}=\boldsymbol{s}+1/2$ are coinciding with $\{\mathbf{\Phi}_{n,\boldsymbol{k}},\mathbf{\Psi}_{n,\boldsymbol{m},\boldsymbol{k},\boldsymbol{s}}\}$ shifted in $1/2$ to the right.

\section{Atomic and spline wavelet decompositions in $A_{pq}^{s,w}(\mathbb{R}^N)$ with $w\in\mathscr{A}_\infty^\loc$} \label{Decom}

Let $C(\mathbb{R}^N)$ be the space of all complex--valued uniformly continuous bounded functions in $\mathbb{R}^N$ and let $$C^M(\mathbb{R}^N)=\bigl\{f\in C(\mathbb{R}^N)\colon D^\gamma f\in C(\mathbb{R}^N),\,|\gamma|\le M\bigr\},\qquad M\in\mathbb{N}_0,$$ obviously normed. We shall use the convention $C^0(\mathbb{R}^N)=C(\mathbb{R}^N)$.

%For a given $N\in\mathbb{N}$ and for each $l\in\{1,\ldots,N\}$ let $\mathscr{V}_{d}$, $d\in\mathbb{N}_0$, denote the multi--resolution approximation of $L_2(\mathbb{R})$ generated by $B_{n_l}$ with $\mathscr{V}_{d+1}=\mathscr{V}_{d}\oplus\mathscr{W}_{d}$ for each $d\in\mathbb{N}_0$ (see p. \pageref{MRA}). Define $V_{d}$, $d\in\mathbb{N}_0$, as the closure in $L_2(\mathbb{R}^N)-$norm of the tensor product
%$\underbrace{\mathscr{V}_{d}\otimes\ldots\otimes\mathscr{V}_{d}}_{N \textrm{ times}}$, $d\in\mathbb{N}_0$.

\smallskip
For $l=1,\ldots,N$ we fix $n_l,\boldsymbol{m}_l\in\mathbb{N}$, $\Bbbk_l\in\{0,1\}$ and $\boldsymbol{k}_l$, $\boldsymbol{s}_l\in\mathbb{Z}$.
Let $\boldsymbol{\kappa}_l\in\{\boldsymbol{k}_l,\tilde{\boldsymbol{k}_l}=\boldsymbol{k}_l+1/2\}$, $\boldsymbol{\varkappa}_l\in\{\boldsymbol{s}_l,\tilde{\boldsymbol{s}}_l=\boldsymbol{s}_l+1/2\}$. {For each $l\in\{1,\ldots,N\}$ we make a choice of $t_1(n_l),\ldots,t_{n_l}(n_l)$ and denote
\begin{equation}\label{vazhnoo}\widetilde{\mathbf{\Phi}}_{n_l,\boldsymbol{\kappa}_l}(x):=(\mathbf{\Lambda}'_{n_l})^{-1}\mathbf{\Phi}_{n_l,{\boldsymbol{\kappa}_l}}(x)\quad\textrm{and}\quad \widetilde{\mathbf{\Psi}}_{n_l,\boldsymbol{m}_l(\Bbbk_l),\boldsymbol{k}_l,\boldsymbol{\varkappa}_l}(x):=(\mathbf{\Lambda}^{''}_{n_l})^{-1}(-1)^{\boldsymbol{k}_l+c_{1/r}}\mathbf{\Psi}_{n_l,\boldsymbol{m}_l(\Bbbk_1),\boldsymbol{k}_l,\boldsymbol{\varkappa}_l}(x).\end{equation} Here $\boldsymbol{\kappa}_l$ and $\boldsymbol{\varkappa}_l$ must be both either integer or non--integer. For every $l\in\{1,\ldots,N\}$ we have $\widetilde{\mathbf{\Phi}}_{n_l,\boldsymbol{\kappa}_l}\in C^{n_l-1}(\mathbb{R})$, $\widetilde{\mathbf{\Psi}}_{n_l,\boldsymbol{m}_l(\Bbbk_l),\boldsymbol{k}_l,\boldsymbol{\varkappa}_l}\in C^{n_l-1}(\mathbb{R})$, and the system $\{\widetilde{\mathbf{\Phi}}_{n_l,\boldsymbol{\kappa}_l},\widetilde{\mathbf{\Psi}}_{n_l,\boldsymbol{m}_l,\boldsymbol{k}_l,\boldsymbol{\varkappa}_l}\}$ forms a Riesz basis in $L_2(\mathbb{R})$ within $\mathrm{MRA}_{B_{n_l,\boldsymbol{k}_l}}$ (if $\boldsymbol{\kappa}_l=\boldsymbol{k}_l$) or $\mathrm{MRA}_{\tilde{B}_{n_l,\boldsymbol{k}_l}}=:\mathrm{MRA}_{{B}_{n_l,\tilde{\boldsymbol{k}}_l}}$ (if $\boldsymbol{\kappa}_l=\tilde{\boldsymbol{k}}_l$).

Let $\mathscr{V}_{d;\boldsymbol{\kappa}_l}$, $d\in\mathbb{Z}$, be the multiresolution approximation of $L_2(\mathbb{R})$ generated by $B_{n_l,\boldsymbol{\kappa}_l}$, and $\mathscr{V}_{d+1;\boldsymbol{\kappa}_l}=\mathscr{V}_{d;\boldsymbol{\kappa}_l}\oplus\mathscr{W}_{d;\boldsymbol{\varkappa}_l}$ for each $d\in\mathbb{N}_0$. The usual tensor--product procedure yields the following compactly supported scaling function and associated wavelets on $\mathbb{R}^N$ related to $\{\widetilde{\mathbf{\Phi}}_{n_l,\boldsymbol{\kappa}_l},\widetilde{\mathbf{\Psi}}_{n_l,\boldsymbol{m}_l,\boldsymbol{k}_l,\boldsymbol{\varkappa}_l}\}$, $l=1,\ldots,N$:
\begin{equation}\label{wavelets_main}{\mathbf{\Phi}}\in C^{n_0-1}(\mathbb{R}^N),\qquad {\mathbf{\Psi}}_{i}\in C^{n_0-1}(\mathbb{R}^N)\quad (i=1,\ldots,2^N-1);\qquad
n_0=\min\{n_1,\ldots,n_N\}.\end{equation}
By $V_{d;\boldsymbol{\kappa}_1,\ldots,\boldsymbol{\kappa}_N}$, $d\in\mathbb{N}_0$, we denote the closure in $L_2(\mathbb{R}^N)$ norm of the tensor product
${\mathscr{V}_{d;\boldsymbol{\kappa}_1}\otimes\ldots\otimes\mathscr{V}_{d;\boldsymbol{\kappa}_N}}$, $d\in\mathbb{N}_0$. Since for each $l=1,\ldots,N$, with chosen $\boldsymbol{\kappa}_l$ and $\boldsymbol{\varkappa}_l$, the system $\{\widetilde{\mathbf{\Phi}}_{n_l,\boldsymbol{\kappa}_l},\widetilde{\mathbf{\Psi}}_{n_l,\boldsymbol{m}_l,\boldsymbol{k}_l,\boldsymbol{\varkappa}_l}\}$ {forms} a Riesz basis of $L_2(\mathbb{R})$, then $\mathbf{\Phi}$ and $\mathbf{\Psi}_i$, $i=1,\ldots,2^N-1$, {forms} a Riesz basis in $V_{0;\boldsymbol{\kappa}_1,\ldots,\boldsymbol{\kappa}_N}$ and $W_{0;\boldsymbol{\varkappa}_1,\ldots,\boldsymbol{\varkappa}_N}$, respectively, where $V_{d+1;\boldsymbol{\kappa}_1,\ldots,\boldsymbol{\kappa}_N}=V_{d;\boldsymbol{\kappa}_1,\ldots,\boldsymbol{\kappa}_N}\oplus W_{d;\boldsymbol{\varkappa}_1,\ldots,\boldsymbol{\varkappa}_N}$, $d\in\mathbb{N}_0$. %For simplicity assume $\boldsymbol{\kappa}_1=\boldsymbol{\varkappa}_1,\ldots,\boldsymbol{\kappa}_N=\boldsymbol{\varkappa}_N$. 
For $x\in\mathbb{R}^N$ we put \begin{equation}\label{ForRepr'}{\mathbf{\Phi}}_{\tau}(x):=\mathbf{\Phi}(x-\tau)\quad\textrm{and}\quad {\mathbf{\Psi}}_{id\tau}(x):=2^{dN/2}\mathbf{\Psi}_i(2^dx-\tau) \qquad (i=1,\ldots,2^N-1;\,\tau\in\mathbb{Z}^N;\, d\in\mathbb{N}_0),\end{equation} where $\mathbb{Z}^N$ is the set of all lattice points in $\mathbb{R}^N$ having integer components.

For $\tau\in\mathbb{Z}^N$ and $d\in\mathbb{N}_0$, let $Q_{d\tau}$ denote the $N-$dimensional cube with sides parallel to the axes of coordinates, centered at $2^{-d}\tau$ and with side length $2^{-d}$. For $0<p<\infty$, $d\in\mathbb{N}_0$ and $\tau\in\mathbb{Z}^N$ we denote by $\chi_{d\tau}^{(p)}$ the $p-$normalized characteristic function of the %cube
$Q_{d\tau}$: \begin{equation}\label{chi}\chi_{d\tau}^{(p)}(x):=2^{dN/p}
\chi_{d\tau}(x):=\begin{cases}2^{dN/p}, & x\in Q_{d\tau},\\ 0, & x\not\in Q_{d\tau},\end{cases}\qquad \|\chi_{d\tau}^{(p)}\|_{L^p(\mathbb{R}^N)}=1.\end{equation}

Characterisation of $A_{pq}^{s,w}(\mathbb{R}^N)$ by spline wavelets was performed in \cite[\S~4.3]{RMC*}. To remind this result, we define for $s\in\mathbb{R}$, $0<p<\infty$, $0<q\le\infty$ and $w\in\mathscr{A}_\infty^\loc$ two sequence spaces: %$b_{pq}^{s}(w)$ and $f_{pq}^{s}(w)$ by
\begin{multline*}b_{pq}^{s,w}:=\Biggl\{\lambda=\{\lambda_{00\tau}\}_{\tau\in\mathbb{Z}^N}\cup\{\lambda_{id\tau}\}_{{i=1,\ldots,2^N-1};\,{d\in\mathbb{N};\,\tau\in\mathbb{Z}^N}}\colon\,\lambda_{id\tau}\in\mathbb{C},\\ \|\lambda\|_{{b}^{s,w}_{pq}}=\biggl(\int_{\mathbb{R}^N}\Bigl(\sum_{\tau\in\mathbb{Z}^N}\bigl|\lambda_{00\tau}\bigr|\chi_{0\tau}^{(p)}(x)\Bigr)^pw(x)\,dx\biggr)^{\frac{1}{p}}\\+
\Biggl(\sum_{d=1}^\infty 2^{d(s-N/p)q}\sum_{i=1}^{2^N-1}\biggl(\int_{\mathbb{R}^N}\Bigl(\sum_{\tau\in\mathbb{Z}^N} \bigl|\lambda_{id\tau}\bigr|\chi_{d\tau}^{(p)}(x)\Bigr)^p w(x)\,dx\biggr)^\frac{q}{p}\Biggr)^\frac{1}{q}<\infty\Biggr\},\end{multline*}
\begin{multline*}f_{pq}^{s,w}:=\Biggl\{\lambda=\{\lambda_{00\tau}\}_{\tau\in\mathbb{Z}^N}\cup\{\lambda_{id\tau}\}_{{i=1,\ldots,2^N-1};\,{d\in\mathbb{N};\,\tau\in\mathbb{Z}^N}}\colon\,\lambda_{id\tau}\in\mathbb{C},\\ \|\lambda\|_{{f}^{s,w}_{pq}}=
\biggl(\int_{\mathbb{R}^N}\Bigl(\sum_{\tau\in\mathbb{Z}^N}\Bigr| \lambda_{00\tau}\chi_{0\tau}^{(p)}(x)\Bigr|^q\biggr)^\frac{p}{q}w(x)\,dx\biggr)^{\frac{1}{p}}\\
+\Biggl(\int_{\mathbb{R}^N}\biggl(\sum_{d=1}^\infty 2^{dsq}\sum_{i=1}^{2^N-1}\sum_{\tau\in\mathbb{Z}^N}\Bigr| \lambda_{id\tau}\chi_{d\tau}^{(p)}(x)\Bigr|^q\biggr)^\frac{p}{q}w(x)\,dx\Biggr)^{\frac{1}{p}}<\infty\Biggr\}.\end{multline*} To simplify the notations we write ${a}_{pq}^{s,w}$ instead of ${b}_{pq}^{s,w}$ and ${f}^{s,w}_{pq}$.
For $w\in\mathscr{A}_\infty^\loc$ with ${r}_0$ of the form \eqref{r_w} $$\sigma_{p}(w):=N\Bigl(\frac{{r}_0}{\min\{p,{r}_0\}}-1\Bigr)+N({r}_0-1),\quad \sigma_q:=\frac{N}{\min\{1,q\}}-N,\quad \sigma_{pq}(w):=\max\bigl\{\sigma_p(w),\sigma_q\bigr).$$
\begin{theorem}\label{main'}
Let $0<p<\infty$, $0<q\le\infty$, $-\infty<s<+\infty$ and $w\in\mathscr{A}_\infty^\loc$. Let 
${\mathbf{\Phi}},\, {\mathbf{\Psi}}_{i}\in C^{n_0-1}(\mathbb{R}^N)$ with $i=1,\ldots,2^N-1$ be functions satisfying \eqref{wavelets_main}. We assume \begin{equation}\label{condB}n_0\ge \max\Bigl\{0,[s]+1,{\bigl[N({r}_0-1)/p-s\bigr]}+1,\bigl[\sigma_{p}(w)-s\bigr]\Bigr\}+1\end{equation} in the case $A_{pq}^{s,w}(\mathbb{R}^N)=B_{pq}^{s,w}(\mathbb{R}^N)$, and
\begin{equation}\label{condF}n_0\ge \max\Bigl\{0,[s]+1,{\bigl[N({r}_0-1)/p-s\bigr]}+1,\bigl[\sigma_{pq}(w)-s\bigr]\Bigr\}+1\end{equation} when $A_{pq}^{s,w}(\mathbb{R}^N)$ denotes $F_{pq}^{s,w}(\mathbb{R}^N)$. Then $f\in\mathscr{S}'_e(\mathbb{R}^N)$ belongs to $A_{pq}^{s,w}(\mathbb{R}^N)$ if and only if it can be represented as
\begin{equation*}%\label{function}
f=\sum_{\tau\in\mathbb{Z}^N} \lambda_{00\tau}\mathbf{\Phi}_\tau+
\sum_{d\in\mathbb{N}}\sum_{i=1}^{2^N-1}\sum_{\tau\in\mathbb{Z}^N}\lambda_{id\tau}2^{-dN/2}\mathbf{\Psi}_{i(d-1)\tau},
\end{equation*} where $\lambda\in a_{pq}^{s,w}$ and the series converges in $\mathscr{S}'_e(\mathbb{R}^N)$. This representation is unique with
\begin{equation}\label{asty}\lambda_{00\tau}=\langle f,\mathbf{\Phi}_{\tau}\rangle\quad(\tau\in\mathbb{Z}^N),\qquad   \lambda_{id\tau}=2^{dN/2}\langle f,\mathbf{\Psi}_{i(d-1)\tau}\rangle\quad(i=1,\ldots,2^N-1;\,d\in\mathbb{N};\,\tau\in\mathbb{Z}^N)\end{equation}
and $I\colon f\mapsto\Bigl\{\bigl\{\lambda_{00\tau}\bigr\}\cup\bigl\{\lambda_{id\tau}\bigr\}\Bigr\}$ is a linear isomorphism of $A_{pq}^{s,w}(\mathbb{R}^N)$ onto $a_{pq}^{s,w}$. Besides, \begin{equation}\label{isom}\|f\|_{A_{pq}^{s,w}(\mathbb{R}^N)}\approx \|{\lambda}\|_{{a}_{pq}^{s,w}}.\end{equation}\end{theorem}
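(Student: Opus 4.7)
The plan is to follow the standard wavelet characterization scheme adapted to the $\mathscr{A}_\infty^\loc$ setting of \cite{R,RMC*}, splitting the statement into the synthesis direction (sequences in $a^{s,w}_{pq}$ produce elements of $A^{s,w}_{pq}(\mathbb{R}^N)$) and the analysis direction (elements of $A^{s,w}_{pq}(\mathbb{R}^N)$ have coefficients in $a^{s,w}_{pq}$), and then deducing uniqueness and the norm equivalence \eqref{isom} from the two inequalities together with Riesz basis properties of the underlying system in $L_2(\mathbb{R}^N)$.

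For the synthesis step, my plan is to view the localized tensor--product functions $\mathbf{\Phi}_{\tau}$ and $2^{-dN/2}\mathbf{\Psi}_{i(d-1)\tau}$ as (rescaled) atoms in Rychkov's atomic decomposition of $A^{s,w}_{pq}(\mathbb{R}^N)$. Three properties must be checked: compact support on dyadic cubes of the correct size, which follows from \eqref{vazhno} applied componentwise together with \eqref{ForRepr'}; smoothness $C^{n_0-1}(\mathbb{R}^N)$ with $n_0$ satisfying \eqref{condB} or \eqref{condF}, which is exactly \eqref{wavelets_main}; and vanishing moments up to the order forced by the negative-smoothness regime, which for the wavelets $\mathbf{\Psi}_i$ is inherited from \eqref{zero} and for the scaling atoms $\mathbf{\Phi}_\tau$ is not required at the coarse scale $d=0$. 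Once the atomic properties are verified, the atomic decomposition theorem gives convergence of the series in $\mathscr{S}'_e(\mathbb{R}^N)$ and the estimate
\[
\|f\|_{A^{s,w}_{pq}(\mathbb{R}^N)}\lesssim \|\lambda\|_{a^{s,w}_{pq}}.
\]

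For the analysis step, the coefficients \eqref{asty} are essentially local means of $f$ tested against compactly supported, smooth, moment-cancelling kernels $\mathbf{\Phi}$ and $\mathbf{\Psi}_i$ (their tensor--product pieces). I would invoke Rychkov's local means characterization of $A^{s,w}_{pq}(\mathbb{R}^N)$: provided the kernels have enough regularity and moment cancellation (again encoded in \eqref{condB}/\eqref{condF} via $n_0$, $r_0$, $\sigma_p(w)$, $\sigma_{pq}(w)$), the maximal-function--type expression associated with these local means controls the $A^{s,w}_{pq}$ norm from below. Summing in $\tau$ and $d$ and re-expressing the local means in terms of the characteristic functions $\chi^{(p)}_{d\tau}$, one obtains
\[
\|\lambda\|_{a^{s,w}_{pq}}\lesssim \|f\|_{A^{s,w}_{pq}(\mathbb{R}^N)},
\]
with the weighted vector-valued inequalities for the (local) Hardy--Littlewood maximal operator acting on $L_p(\mathbb{R}^N,w)$ doing the main analytic work; these inequalities are exactly the reason why the thresholds involve $r_0$ and $\sigma_{pq}(w)$.

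Uniqueness of the representation and the explicit formulas \eqref{asty} then follow from the semi--orthogonal Riesz basis property of the system $\{\mathbf{\Phi}_\tau,\mathbf{\Psi}_{id\tau}\}$ in $L_2(\mathbb{R}^N)$, together with the fact that $\{\widetilde{\mathbf{\Phi}}_{n_l,\boldsymbol{\kappa}_l},\widetilde{\mathbf{\Psi}}_{n_l,m_l,\boldsymbol{k}_l,\boldsymbol{\varkappa}_l}\}$ expand as finite linear combinations of an orthonormal Battle--Lemari\'e basis, so that the pairing $\langle f,\cdot\rangle$ makes sense on $\mathscr{S}'_e(\mathbb{R}^N)$ through the regularity gained from \eqref{wavelets_main}. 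The main obstacle, in my view, is not any single step but the bookkeeping required to match the three different smoothness/cancellation thresholds on $n_0$ (coming from $[s]+1$, from $N(r_0-1)/p-s$, and from $\sigma_p(w)-s$ or $\sigma_{pq}(w)-s$) against what the Battle--Lemari\'e construction actually delivers in $C^{n_0-1}(\mathbb{R}^N)$ with $n$ vanishing moments, in both the scalar one-dimensional setting and after the tensor--product lift to $\mathbb{R}^N$. This is exactly what is carried out in \cite[\S 4.3]{RMC*}, which is why the proof can be reduced to invoking that result once the correspondence between the present notation and the atoms/local means there is made explicit.
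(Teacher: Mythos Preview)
Your proposal is correct and matches the paper's treatment: the paper does not give an independent proof of this theorem but simply quotes it from \cite[\S~4.3, Theorem~4.10]{RMC*}, and you correctly identify this, sketching the atomic-decomposition/local-means argument that underlies that reference. There is nothing to add beyond noting that the paper itself provides no proof here, only the citation.
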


\section{Images of integration operators}\label{Emb}
The following differentiation property of B--splines plays an essential role in this part of the work:
\begin{equation}\label{diff}
B'_n(\cdot)=B_{n-1}(\cdot)-B_{n-1}(\cdot-1)\quad\textrm{almost everywhere on }\mathbb{R}.\end{equation}
We shall mostly use its generalised form:
\begin{equation}\label{diff'}
B^{(k)}_{n}(\cdot)=\sum_{l=0}^k \frac{(-1)^l k!}{l!(k-l)!}B_{n-k}(\cdot-l)%=:\sum_{\nu=0}^k (-1)^\nu\gamma_{k\nu}\,B_{n-k}(\cdot-\nu)
,\quad\quad n\ge k\in\mathbb{N}.\end{equation} Recall that ${A}_{pq}^{s,w}(\mathbb{R}^N)$ stands for either ${B}_{pq}^{s,w}(\mathbb{R}^N)$ or ${F}_{pq}^{s,w}(\mathbb{R}^N)$ with a weight $w$ from $\mathscr{A}_\infty^\loc$. 

\smallskip 
We begin with an example for the case $N=2$ and the operator $I_{\Omega_1^+}^{1}$ (see \eqref{starEx} with $m_1=1$) of the form
\begin{equation}\label{left=1}
I_{\Omega_1^+}^{1} f({x_1},y_2):=\int_{0}^{x_1}{f({y_1},y_2)}\,dy_1,\qquad x_1\in\Omega_1^+=[0,+\infty).
\end{equation}
\begin{example}\label{Exam} Let $1<  p,q<\infty$ and $s=1$. Assume $u,w\in\mathscr{A}_\infty^\loc$ with $r_0(u)=r_0(w):=1$. Suppose $f\in L_1^\textrm{loc}(\mathbb{R}^2)$ and $f({y_1},y_2)\equiv 0$ for all $y_2\in\mathbb{R}$ if $y_1\in(-\infty,0)$. 
We shall demonstrate that, in the situation when weights $u$ and $w$ are of product type, that is $v(x_1,x_2)=v_1(x_1)v_2(x_2)$ for the both $v=u$ and $v=w$, then $I_{\Omega_1^+}^{1}f\in {B}_{pq}^{0,w}(\mathbb{R}^2)$ if $f\in {B}_{pq}^{1,u}(\mathbb{R}^2)$ provided \begin{align}\mathscr{M}_{\Omega_1^+}^1(d):=&\frac{1}{2^{d}}\sup_{\tau_1\ge 0}\biggl(\sum_{r\ge\tau_1}\int_{Q_{dr}}w_1\biggr)^{\frac{1}{p}}\biggl(\sum_{0\le r\le\tau_1} \Bigl(\int_{Q_{dr}}\bar{u}_1\Bigr)^{1-p'}\biggr)^{\frac{1}{p'}}<\infty\quad\forall d\in\mathbb{N}_0,\label{Md}\\\mathscr{N}_{\Omega_1^+}^1(d):=&\frac{1}{2^{2d}}\Biggl[\sup_{\tau_1\ge 0}\biggl(\sum_{r\ge\tau_1}(r-\tau_1+1)^{p}\int_{Q_{dr}}w_1\biggr)^{\frac{1}{p}}\biggl(\sum_{0\le r\le\tau_1} \Bigl(\int_{Q_{dr}}\tilde{u}_1\Bigr)^{1-p'}\biggr)^{\frac{1}{p'}}\nonumber\\&+\sup_{\tau_1\ge 0}\biggl(\sum_{r\ge\tau_1}\int_{Q_{dr}}w_1\biggr)^{\frac{1}{p}}\biggl(\sum_{0\le r\le\tau_1} (\tau_1-r+1)^{p'}\Bigl(\int_{Q_{dr}}\tilde{u}_1\Bigr)^{1-p'}\biggr)^{\frac{1}{p'}}\Biggr]<\infty\quad\forall d\in\mathbb{N},\label{Nd}\end{align} where $\tau_1\in\mathbb{Z}$ is the first component of $\tau=(\tau_1,\tau_2)\in\mathbb{Z}^2$ and $Q_{dr}=\Bigl[\frac{r-1/2}{2^d},\frac{r+1/2}{2^d}\Bigr]$. Here $\bar{u_1}\le u_1$ and $\tilde{u}_1\le u_1$ are one variable functions, we assume $u:=u_1w_2$. % and $$\bar{\tau}_1=\max\Bigl\{ \tau_1\le 0\colon\ \langle f,\lambda_{id\tau}\rangle\not=0\quad\textrm{for all}\ \tau_2,\quad \textrm{where}\ i=0,1,2,3;\ d\in\mathbb{N};\ \tau=(\tau_1,\tau_2)\in\mathbb{Z}^2\Bigr\};$$ 
Moreover, it holds 
\begin{equation}\label{ineqEx}\|I_{\Omega_1^+}^{1}f\|_{{B}_{pq}^{0,w}(\mathbb{R}^2)}\lesssim C_{\Omega_1^+}^{1}\|f\|_{{B}_{pq}^{1,u}(\mathbb{R}^2)}\quad\textrm{with}\ \ C_{\Omega_1^+}^{1}:=\sup_{d\in\mathbb{N}_0}\bigl[\mathscr{M}_{\Omega_1^+}^1(d)+\mathscr{N}_{\Omega_1^+}^1({d+1})\bigr].\end{equation}

Observe that $\sigma_p(w)=0$ in our case. To prove our assertion we choose $n_1=2$ and $n_2=3$. Then
$n_0=\min\{n_1,n_2\}-2=0=s$ in the target space ${B}_{pq}^{s=0,w}(\mathbb{R}^2)$, which meets the condition \eqref{condB}.% for $B^{1,w}_{pq}(\mathbb{R}^2)$. 

On the strength of Theorem \ref{main'},
\begin{equation}\label{rysha}\|I_{\Omega_1^+}^{1}f\|_{B^{0,w}_{pq}(\mathbb{R}^N)}\approx\|{\lambda}\|_{{b}_{pq}^{0,w}},\end{equation} where
$$\lambda_{00\tau}=\langle I_{\Omega_1^+}^{1}f,\mathbf{\Phi}_{\tau}\rangle\quad(\tau=(\tau_1,\tau_2)\in\mathbb{Z}^2),\qquad   \lambda_{id\tau}=2^{d}\langle I_{\Omega_1^+}^{1}f,\mathbf{\Psi}_{i(d-1)\tau}\rangle\quad(i=1,2,3;\,d\in\mathbb{N};\,\tau\in\mathbb{Z}^2).$$ Here the system \eqref{ForRepr'} of functions $\mathbf{\Phi}_{\tau}$ and $\mathbf{\Psi}_{i(d-1)\tau}$ is generated by %\eqref{wavelets_main} 
two one--dimentional systems of the form \eqref{vazhnoo} with $\boldsymbol{\kappa}_1=-3$, $\boldsymbol{\kappa}_2=-4$, $\boldsymbol{\varkappa}_1=-7$, $\boldsymbol{\varkappa}_2=-5$, $n_1=2$, $n_2=3$, $\Bbbk_1=1=\Bbbk_2+1$ and $\boldsymbol{m}_1(\Bbbk_1)=3$. To visualise the situation, for each of the two variables $x_l$ ($l=1,2$) we start from one variable functions ${\mathbf{\Phi}}_{n_l,\boldsymbol{k}_l}$ and ${\mathbf{\Psi}}_{n_l,\boldsymbol{m}_l(\Bbbk_l),\boldsymbol{k}_l,\boldsymbol{\varkappa}_l}$ such that (see \eqref{MMM0} and \eqref{MMM}) \begin{equation}\label{L1}\widehat{\mathbf{\Phi}}_{n_l,\boldsymbol{\kappa}_l}(\omega)=\widehat{\mathbf{\Phi}}_{r_1,\ldots,r_{n_l};\boldsymbol{\kappa}_l}(\omega)=\beta_{n_l}\,\widehat{B}_{n_l,\boldsymbol{\kappa}_l}(\omega)\end{equation} and 
\begin{align}\label{L4}\widehat{\mathbf{\Psi}}_{n_l,\boldsymbol{m}_l(\Bbbk_l),\boldsymbol{\kappa}_l,\boldsymbol{\varkappa}_l}(\omega)&=\widehat{\mathbf{\Psi}}_{r_1,\ldots,r_{n_l};m_l(\Bbbk_l),\boldsymbol{\kappa}_l,\boldsymbol{\varkappa}_l}(\omega)\nonumber\\&=
\frac{\gamma_{n_l}} {2}\,\bigl|\mathcal{A}_{n_l}(\omega/2)\bigr|^2\,\bigl|\mathscr{A}_{\boldsymbol{m}_l}(\omega)\bigr|^{2\Bbbk_l}\sum_{k=0}^{n_l+1}\frac{(-1)^k(n_l+1)!}{k!(n_l+1-k)!}
\mathrm{e}^{(n_l-k)i\omega/2}\,\widehat{B}_{n_l}(\omega/2)
\mathrm{e}^{\boldsymbol{\varkappa}_li\omega}
.\end{align} Let us simplify the notations and write $\widetilde{\mathbf{\Phi}}_{i(d-1)\tau_l}$ and $\widetilde{\mathbf{\Psi}}_{i(d-1)\tau_l}$ instead of $\widetilde{\mathbf{\Phi}}_{n_l,\boldsymbol{\kappa}_l}(2^{d-1}\cdot-\tau_l)=\bigl(\mathbf{\Lambda}'_{n_l}\bigr)^{-1}$ ${\mathbf{\Phi}}_{n_l,\boldsymbol{\kappa}_l}(2^{d-1}\cdot-\tau_l)$ and $\widetilde{\mathbf{\Psi}}_{n_l,\boldsymbol{m}_l(\Bbbk_l),\boldsymbol{\kappa}_l,\boldsymbol{\varkappa}_l}(2^{d-1}\cdot-\tau_l)=\bigl(\mathbf{\Lambda}^{''}_{n_l}\bigr)^{-1}{\mathbf{\Psi}}_{n_l,\boldsymbol{m}_l(\Bbbk_l),\boldsymbol{\kappa}_l,\boldsymbol{\varkappa}_l}(2^{d-1}\cdot-\tau_l)$, respectively, for $i\in\{1,2,3\}$. We have $\mathbf{\Phi}_{\tau}(x_1,x_2)=\mathbf{\Phi}_{00\tau}(x_1,x_2)=\widetilde{\mathbf{\Phi}}_{00\tau_1}(x_1)\widetilde{\mathbf{\Phi}}_{00\tau_2}(x_2)=\widetilde{\mathbf{\Phi}}_{\tau_1}(x_1)\widetilde{\mathbf{\Phi}}_{\tau_2}(x_2)$, further, $\mathbf{\Psi}_{1(d-1)\tau}(x_1,x_2)=\widetilde{\mathbf{\Psi}}_{1(d-1)\tau_1}(x_1)\widetilde{\mathbf{\Psi}}_{1(d-1)\tau_2}(x_2)$,\ $\mathbf{\Psi}_{2(d-1)\tau}(x_1,x_2)=\widetilde{\mathbf{\Psi}}_{2(d-1)\tau_1}(x_1)\widetilde{\mathbf{\Phi}}_{2(d-1)\tau_2}(x_2)$,\ $\mathbf{\Psi}_{3(d-1)\tau}$ $(x_1,x_2)=\widetilde{\mathbf{\Phi}}_{3(d-1)\tau_1}(x_1)\widetilde{\mathbf{\Psi}}_{3(d-1)\tau_2}(x_2)$ for $x=(x_1,x_2)\in\mathbb{R}^2$ and 
\begin{align}\label{L11}\|\lambda\|_{{b}^{s,w}_{pq}}=
&\mathbf{V}_{I_{\Omega_1^+}^{1}f}+\mathbf{W}_{I_{\Omega_1^+}^{1}f}:=\biggl(\int_{\mathbb{R}^2}\Bigl(\sum_{\tau\in\mathbb{Z}^2}\bigl|\langle I_{\Omega_1^+}^{1}f,\mathbf{\Phi}_{\tau}\rangle\bigr|\chi_{0\tau}^{(p)}(x)\Bigr)^pw(x)\,dx\biggr)^{\frac{1}{p}}\nonumber\\&+
\Biggl(\sum_{d=1}^\infty 2^{d(1-2/p)q}\sum_{i=1}^{3}\biggl(\int_{\mathbb{R}^2}\Bigl(\sum_{\tau\in\mathbb{Z}^2} \bigl|\langle I_{\Omega_1^+}^{1}f,\mathbf{\Psi}_{i(d-1)\tau}\rangle\bigr|\chi_{d\tau}^{(p)}(x)\Bigr)^p w(x)\,dx\biggr)^\frac{q}{p}\Biggr)^\frac{1}{q},\end{align} where, in view of disjointness of $\mathring Q_{d\tau}=\prod_{l=1}^2 \Bigl(\frac{\tau_l-1/2}{2^d},\frac{\tau_l+1/2}{2^d}\Bigr)$,
\begin{align*}\Bigl(\mathbf{V}_{I_{\Omega_1^+}^{1}f}\Bigr)^p&=\sum_{\tau\in\mathbb{Z}^2}\int_{Q_{0\tau}}\Bigl(\sum_{\tau\in\mathbb{Z}^2}\bigl|\langle I_{\Omega_1^+}^{1}f,\mathbf{\Phi}_{\tau}\rangle\bigr|\chi_{0\tau}^{(p)}(x)\Bigr)^pw(x)\,dx=
\sum_{\tau\in\mathbb{Z}^2}\bigl|\langle I_{\Omega_1^+}^{1}f,\mathbf{\Phi}_{\tau}\rangle\bigr|^p\int_{Q_{0\tau}}w\\&=\sum_{\tau_1\in\mathbb{Z}}
\sum_{\tau_2\in\mathbb{Z}}\bigl|\langle I_{\Omega_1^+}^{1}f,\widetilde{\mathbf{\Phi}}_{\tau_1}\widetilde{\mathbf{\Phi}}_{\tau_2}\rangle\bigr|^p\int_{Q_{0\tau_1}}w_1\,\int_{Q_{0\tau_2}}w_2=:\mathbf{V}_{I_{\Omega_1^+}^{1}f}^{(0)}
\end{align*}and, analogously, by revealing \eqref{chi},\begin{multline}\label{L2}\Bigl(\mathbf{W}_{I_{\Omega_1^+}^{1}f}\Bigr)^q=\sum_{d=1}^\infty 2^{dq}
\Biggl\{\sum_{i=1}^{3}\biggl(\int_{\mathbb{R}^2}\Bigl(\sum_{\tau\in\mathbb{Z}^2} \bigl|\langle I_{\Omega_1^+}^{1}f,\mathbf{\Psi}_{i(d-1)\tau}\rangle\bigr|\chi_{d\tau}(x)\Bigr)^p w(x)\,dx\biggr)^\frac{q}{p}\Biggr\}\\=
\sum_{d=1}^\infty 2^{dq}
\Biggl\{\biggl(\sum_{\tau_1\in\mathbb{Z}}
\sum_{\tau_2\in\mathbb{Z}}\bigl|\langle I_{\Omega_1^+}^{1}f,\widetilde{\mathbf{\Psi}}_{1(d-1)\tau_1}\widetilde{\mathbf{\Psi}}_{1(d-1)\tau_2}\rangle\bigr|^p \int_{Q_{d\tau_1}}w_1\,\int_{Q_{d\tau_2}}w_2\biggr)^\frac{q}{p}\\+\biggl(\sum_{\tau_1\in\mathbb{Z}}
\sum_{\tau_2\in\mathbb{Z}}\bigl|\langle I_{\Omega_1^+}^{1}f,\widetilde{\mathbf{\Psi}}_{2(d-1)\tau_1}\widetilde{\mathbf{\Phi}}_{2(d-1)\tau_2}\rangle\bigr|^p \int_{Q_{d\tau_1}}w_1\,\int_{Q_{d\tau_2}}w_2\biggr)^\frac{q}{p}\\+\biggl(\sum_{\tau_1\in\mathbb{Z}}
\sum_{\tau_2\in\mathbb{Z}}\bigl|\langle I_{\Omega_1^+}^{1}f,\widetilde{\mathbf{\Phi}}_{3(d-1)\tau_1}\widetilde{\mathbf{\Psi}}_{3(d-1)\tau_2}\rangle\bigr|^p \int_{Q_{d\tau_1}}w_1\,\int_{Q_{d\tau_2}}w_2\biggr)^\frac{q}{p}\Biggr\}=:\sum_{d=1}^\infty 2^{dq}\Bigl\{\sum_{i=1}^3\mathbf{W}_{I_{\Omega_1^+}^{1}f}^{i(d-1)}\Bigr\}.\end{multline} Denote $\int_{Q_{d\tau_1}}w_1=:w_{d\tau_1}$ and write for a fixed $\tau_2$, taking into account that $I_{\Omega_1^+}^{1}f=0$ on $\mathbb{R}^2\setminus\mathbb{R}^2_+$, where $\mathbb{R}_+^2:=(0,\infty)\times(0,\infty)$,  that is $\langle I_{\Omega_1^+}^{1}f,\widetilde{\mathbf{\Phi}}_{\tau_1}\widetilde{\mathbf{\Phi}}_{\tau_2}\rangle=0$ for all $\tau_2$ provided $\tau_1<0$ (see \eqref{vazhno}):
\begin{multline}\label{L3}
\sum_{\tau_1\in\mathbb{Z}}
\bigl|\langle I_{\Omega_1^+}^{1}f,\widetilde{\mathbf{\Phi}}_{\tau_1}\widetilde{\mathbf{\Phi}}_{\tau_2}\rangle\bigr|^pw_{0\tau_1}=\sum_{\tau_1\in\mathbb{Z}}
\bigl|\langle I_{\Omega_1^+}^{1}f,\bigl[\widetilde{\mathbf{\Phi}}_{\tau_1}\mp\widetilde{\mathbf{\Phi}}_{\tau_1-1}\bigr]\widetilde{\mathbf{\Phi}}_{\tau_2}\rangle\bigr|^pw_{0\tau_1}\\=:\sum_{\tau_1\in\mathbb{Z}}
\bigl|\langle I_{\Omega_1^+}^{1}f,\bigl[-\mu_{00\tau_1}+\widetilde{\mathbf{\Phi}}_{\tau_1-1}\bigr]\widetilde{\mathbf{\Phi}}_{\tau_2}\rangle\bigr|^pw_{0\tau_1}=\sum_{\tau_1\in\mathbb{Z}}
\bigl|\langle I_{\Omega_1^+}^{1}f,\bigl[-\mu_{00\tau_1}+\widetilde{\mathbf{\Phi}}_{\tau_1-1}\pm\widetilde{\mathbf{\Phi}}_{\tau_1-2}\bigr]\widetilde{\mathbf{\Phi}}_{\tau_2}\rangle\bigr|^pw_{0\tau_1}\\=:\sum_{\tau_1\in\mathbb{Z}}
\bigl|\langle I_{\Omega_1^+}^{1}f,\bigl[-\mu_{00\tau_1}-\mu_{00(\tau_1-1)}+\widetilde{\mathbf{\Phi}}_{\tau_1-2}\bigr]\widetilde{\mathbf{\Phi}}_{\tau_2}\rangle\bigr|^pw_{0\tau_1}=\ldots\\\ldots=\sum_{\tau_1\in\mathbb{Z}}
\bigl|\langle I_{\Omega_1^+}^{1}f,\bigl[-\mu_{00\tau_1}-\mu_{00(\tau_1-1)}-\ldots -\mu_{001}+\widetilde{\mathbf{\Phi}}_{0}\bigr]\widetilde{\mathbf{\Phi}}_{\tau_2}\rangle\bigr|^pw_{0\tau_1}\\=\sum_{\tau_1\ge 0}
\bigl|\langle I_{\Omega_1^+}^{1}f,\bigl[-\mu_{00\tau_1}-\mu_{00(\tau_1-1)}-\ldots -\mu_{001}-\mu_{000}\bigr]\widetilde{\mathbf{\Phi}}_{\tau_2}\rangle\bigr|^pw_{0\tau_1}.
\end{multline} Let $\boldsymbol{\mu}_{00\tau_1}:=\langle I_{\Omega_1^+}^{1}f,\mu_{00\tau_1}\widetilde{\mathbf{\Phi}}_{\tau_2}\rangle$, then, on the strength of \cite[Theorem 1.1]{PSU} and in view of \eqref{Md} for $d=0$, \begin{multline*}
\sum_{\tau_1\ge 0}
\bigl|\langle I_{\Omega_1^+}^{1}f,\bigl[-\mu_{00\tau_1}-\mu_{00(\tau_1-1)}-\ldots -\mu_{001}-\mu_{000}\bigr]\widetilde{\mathbf{\Phi}}_{\tau_2}\rangle\bigr|^pw_{0\tau_1}\\\le\sum_{\tau_1\ge 0}\Bigl(\sum_{0\le r\le \tau_1}\bigl|\boldsymbol{\mu}_{00r}
\bigr|\Bigr)^pw_{0\tau_1}\lesssim \bigl[\mathscr{M}_{\Omega_1^+}^1(0)\bigr]^p\ \sum_{\tau_1\ge 0}\bigl|\boldsymbol{\mu}_{00\tau_1}\bigr|^p\bar{u}_{0\tau_1}.
\end{multline*} Therefore,
\begin{equation*}\mathbf{V}_{I_{\Omega_1^+}^{1}f}^{(0)}\lesssim \bigl[\mathscr{M}_{\Omega_1^+}^1(0)\bigr]^p
\sum_{\tau_2\in\mathbb{Z}}
\sum_{\tau_1\in\mathbb{Z}}\bigl|\langle I_{\Omega_1^+}^{1}f,\bigl[\widetilde{\mathbf{\Phi}}_{\tau_1-1}-\widetilde{\mathbf{\Phi}}_{\tau_1}\bigr]\widetilde{\mathbf{\Phi}}_{\tau_2}\rangle\bigr|^p\int_{Q_{0\tau_1}}u_1\,\int_{Q_{0\tau_2}}w_2.
\end{equation*} In view of \eqref{L1} and \eqref{diff}
\begin{align}\label{L7}&\mathbf{\Lambda}'_2\mathbf{\Lambda}'_3\cdot\langle I_{\Omega_1^+}^{1}f,\bigl[\widetilde{\mathbf{\Phi}}_{\tau_1-1}-\widetilde{\mathbf{\Phi}}_{\tau_1}\bigr]\widetilde{\mathbf{\Phi}}_{\tau_2}\rangle\nonumber\\=&\int_0^\infty\int_0^\infty\biggl(\int_0^{x_1} f(y_1,y_2)\,dy_1\biggr)\Bigl[{\mathbf{\Phi}}_{2,-3}(x_1-\tau_1+1)-{\mathbf{\Phi}}_{2,-3}(x_1-\tau_1)\Bigr]{\mathbf{\Phi}}_{3,-4}(y_2-\tau_2)\,dx_1dy_2\nonumber\\=&\beta_2\int_0^\infty\int_0^\infty\biggl(\int_0^{x_1} f(y_1,y_2)\,dy_1\biggr)\Bigl[B_{2,-3}(x_1-\tau_1+1)-B_{2,-3}(x_1-\tau_1)\Bigr]{\mathbf{\Phi}}_{3,-4}(y_2-\tau_2)\,dx_1dy_2\nonumber\\\overset{\eqref{diff}}{=}&\beta_2\int_0^\infty\int_0^\infty f(y_1,y_2)\,B_{3,-3}(y_1-\tau_1+1){\mathbf{\Phi}}_{3,-4}(y_2-\tau_2)\,dy_1dy_2\nonumber\\=&:\frac{\beta_2}{\beta_3}\int_0^\infty\int_0^\infty f(y_1,y_2)\,{\mathbf{\Phi}}_{n_1^\ast=3,\boldsymbol{k}^\ast_1=-4}(y_1-\tau_1){\mathbf{\Phi}}_{3,-4}(y_2-\tau_2)\,dy_1dy_2\nonumber\\=&:\bigl(\mathbf{\Lambda}'_3\bigr)^2\frac{\beta_2}{\beta_3}\langle f,\widetilde{\mathbf{\Phi}}^\ast_{\tau_1}\widetilde{\mathbf{\Phi}}_{\tau_2}\rangle=:\bigl(\mathbf{\Lambda}'_3\bigr)^2\frac{\beta_2}{\beta_3}\langle f,{\mathbf{\Phi}}^\ast_{\tau}\rangle,\end{align} where ${\mathbf{\Phi}}^\ast_{\tau}$ is the product of $\widetilde{\mathbf{\Phi}}^\ast_{\tau_1}$ and $\widetilde{\mathbf{\Phi}}_{\tau_2}$ with %the first component 
$\widetilde{\mathbf{\Phi}}^\ast_{\tau_1}=\bigl(\mathbf{\Lambda}'_3\bigr)^{-1}{\mathbf{\Phi}}_{3,-4}(\cdot-\tau_1)$. We shall use this function %${\mathbf{\Phi}}^\ast$ 
to form another system $\{\mathbf{\Phi}^\ast_{\tau}\}$ than $\{\mathbf{\Phi}_{\tau}\}$ of the type \eqref{ForRepr'} suitable for representing $\|f\|_{{B}_{pq}^{1,u}(\mathbb{R}^2)}$, relatively to $\|I_{\Omega_1^+}^{1}f\|_{{B}_{pq}^{0,w}(\mathbb{R}^2)}$ in the sense of \eqref{ineqEx}, in an appropriate space ${b}_{pq}^{1,u}$ of sequences $\lambda^\ast$ with $u=u_1w_2$: \begin{equation}\label{seq0}\lambda^\ast_{00\tau}=\langle f,\mathbf{\Phi}^\ast_{\tau}\rangle\quad(\tau=(\tau_1,\tau_2)\in\mathbb{Z}^2),\qquad   \lambda^\ast_{id\tau}=2^{d}\langle f,\mathbf{\Psi}^\ast_{i(d-1)\tau}\rangle\quad(i=1,2,3;\,d\in\mathbb{N};\,\tau\in\mathbb{Z}^2).\end{equation} To establish $\{\mathbf{\Psi}^\ast_{i(d-1)\tau}\}$ we apply similar to \eqref{L3} procedure to each term in \eqref{L2}, again taking into account that $I_{\Omega_1^+}^{1}f=0$ on $\mathbb{R}^2\setminus\mathbb{R}^2_+$. On the strength of \eqref{vazhno} this means that $\langle I_{\Omega_1^+}^{1}f,\widetilde{\mathbf{\Psi}}_{1(d-1)\tau_1}\widetilde{\mathbf{\Psi}}_{1(d-1)\tau_2}\rangle=0$ for all $\tau_2$ provided $\tau_1<0$. Namely, we write for a fixed $\tau_2$, starting from the first term in \eqref{L2}: \begin{multline*}\sum_{\tau_1\in\mathbb{Z}}\bigl|\langle I_{\Omega_1^+}^{1}f,\widetilde{\mathbf{\Psi}}_{1(d-1)\tau_1}\widetilde{\mathbf{\Psi}}_{1(d-1)\tau_2}\rangle\bigr|^p w_{d\tau_1}\\=\sum_{\tau_1\in\mathbb{Z}}\bigl|\langle I_{\Omega_1^+}^{1}f,\bigl[\widetilde{\mathbf{\Psi}}_{1(d-1)\tau_1}\mp2\widetilde{\mathbf{\Psi}}_{1(d-1)(\tau_1-1)}\pm\widetilde{\mathbf{\Psi}}_{1(d-1)(\tau_1-2)}\bigr]\widetilde{\mathbf{\Psi}}_{1(d-1)\tau_2}\rangle\bigr|^p w_{d\tau_1}\\=:\sum_{\tau_1\in\mathbb{Z}}
\bigl|\langle I_{\Omega_1^+}^{1}f,\bigl[\nu_{1(d-1)\tau_1}+2\widetilde{\mathbf{\Psi}}_{1(d-1)(\tau_1-1)}-\widetilde{\mathbf{\Psi}}_{1(d-1)(\tau_1-2)}\bigr]\widetilde{\mathbf{\Psi}}_{1(d-1)\tau_2}\rangle\bigr|^pw_{d\tau_1}\\=\sum_{\tau_1\in\mathbb{Z}}
\bigl|\langle I_{\Omega_1^+}^{1}f,\bigl[\nu_{1(d-1)\tau_1}+2\widetilde{\mathbf{\Psi}}_{1(d-1)(\tau_1-1)}-\widetilde{\mathbf{\Psi}}_{1(d-1)(\tau_1-2)}\mp3\widetilde{\mathbf{\Psi}}_{1(d-1)(\tau_1-2)}\pm2\widetilde{\mathbf{\Psi}}_{1(d-1)(\tau_1-3)} \bigr]\widetilde{\mathbf{\Psi}}_{1(d-1)\tau_2}\rangle\bigr|^pw_{d\tau_1}\\=:\sum_{\tau_1\in\mathbb{Z}}
\bigl|\langle I_{\Omega_1^+}^{1}f,\bigl[\nu_{1(d-1)\tau_1}+2\nu_{1(d-1)(\tau_1-1)}+3\widetilde{\mathbf{\Psi}}_{1(d-1)(\tau_1-2)}-2\widetilde{\mathbf{\Psi}}_{1(d-1)(\tau_1-3)} \bigr]\widetilde{\mathbf{\Psi}}_{1(d-1)\tau_2}\rangle\bigr|^pw_{d\tau_1}=...\end{multline*}\begin{multline}\label{L8}\ldots=
\sum_{\tau_1\ge 0}
\bigl|\langle I_{\Omega_1^+}^{1}f,\bigl[\nu_{1(d-1)\tau_1}+2\nu_{1(d-1)(\tau_1-2)}+3\nu_{1(d-1)(\tau_1-3)}\ldots +(\tau_1+1)\nu_{1(d-1)0}\bigr]\widetilde{\mathbf{\Psi}}_{1(d-1)\tau_2}\rangle\bigr|^pw_{d\tau_1}\\\le
\sum_{\tau_1\ge 0}\biggl(\sum_{0\le r\le\tau_1}\Bigl|\langle I_{\Omega_1^+}^{1}f,(\tau_1-r+1)\nu_{1(d-1)r}\widetilde{\mathbf{\Psi}}_{1(d-1)\tau_2}\rangle\Bigr|\biggr)^pw_{d\tau_1}\\=:\sum_{\tau_1\ge 0}\biggl(\sum_{0\le r\le\tau_1}(\tau_1-r+1)\bigl|\boldsymbol{\nu}_{1(d-1)r}\bigr|\biggr)^pw_{d\tau_1}.\end{multline}
On the strength of \cite[Theorem 1.8%\&1.11
]{PSU} and in view of \eqref{Nd}, \begin{align}\label{L9}
\sum_{\tau_1\ge 0}\biggl(\sum_{0\le r\le\tau_1}(\tau_1-r+1)\bigl|\boldsymbol{\nu}_{1(d-1)r}\bigr|\biggr)^pw_{d\tau_1}&\lesssim\bigl[2^{2d}\,\mathscr{N}_{\Omega_1^+}^1(d)\bigr]^p\sum_{\tau_1\ge 0}\bigl|\boldsymbol{\nu}_{1(d-1)\tau_1}\bigr|^p\tilde{u}_{d\tau_1}\nonumber\\&=\bigl[2^{2d}\,\mathscr{N}_{\Omega_1^+}^1(d)\bigr]^p\sum_{\tau_1\ge 0}\bigl|\langle I_{\Omega_1^+}^{1}f,\nu_{1(d-1)\tau_1}\widetilde{\mathbf{\Psi}}_{1(d-1)\tau_2}\rangle\bigr|^p\tilde{u}_{d\tau_1}.\end{align} We have
\begin{align}\label{L5} 
&\mathbf{\Lambda}^{''}_2\mathbf{\Lambda}^{''}_3\langle I_{\Omega_1^+}^{1}f,\nu_{1(d-1)\tau_1}\widetilde{\mathbf{\Psi}}_{1(d-1)\tau_2}\rangle\nonumber\\=&\mathbf{\Lambda}^{''}_2\mathbf{\Lambda}^{''}_3\langle I_{\Omega_1^+}^{1}f,\bigl[\widetilde{\mathbf{\Psi}}_{1(d-1)(\tau_1-2)}-2\widetilde{\mathbf{\Psi}}_{1(d-1)(\tau_1-1)}+\widetilde{\mathbf{\Psi}}_{1(d-1)\tau_1}\bigr]\widetilde{\mathbf{\Psi}}_{1(d-1)\tau_2}\rangle\nonumber\\=&\int_0^\infty\int_0^\infty I_{\Omega_1^+}^{1}f(x_1,y_2)\Bigl[{\mathbf{\Psi}}_{2,3,-3,-7}(2^{d-1}x_1-\tau_1+2)-2{\mathbf{\Psi}}_{2,3,-3,-7}(2^{d-1}x_1-\tau_1+1)\nonumber\\&+{\mathbf{\Psi}}_{2,3,-3,-7}(2^{d-1}x_1-\tau_1)\Bigr]{\mathbf{\Psi}}_{3,\boldsymbol{m}_2=\emptyset,-4,-9}(2^{d-1}y_2-\tau_2)\,dx_1dy_2,\end{align} where for $\tau_1=0$ (see \eqref{L4}) \begin{multline}\label{L6}\hat{\mathbf{\Psi}}_{2,3,-3,-7}(\omega/2^{d-1})=\frac{\gamma_{2}} {2}\,\bigl|\mathcal{A}_{2}(\omega/2^d)\bigr|^2\,\bigl|\mathscr{A}_{3}(\omega/2^{d-1})\bigr|^{2}\sum_{k=0}^{3}\frac{(-1)^k(3)!}{k!(3-k)!}
\mathrm{e}^{(2-k)i\omega/2^d}\,\widehat{B}_{2}(\omega/2^d)\mathrm{e}^{-7i\omega/2^{d-1}}\\=\frac{\gamma_{2}} {2}\,\bigl|\mathcal{A}_{2}(\omega/2^d)\bigr|^2\,\bigl|\mathbf{A}_{3}(-\omega/2^d)\bigr|^{2}\bigl|\mathcal{A}_{3}(\omega/2^d)\bigr|^{2}\sum_{k=0}^{3}\frac{(-1)^k(3)!}{k!(3-k)!}
\mathrm{e}^{(2-k)i\omega/2^d}\,\widehat{B}_{2}(\omega/2^d)\mathrm{e}^{-7i\omega/2^{d-1}}.\end{multline} Notice that adding one more iteration to $\sum_{k=0}^{3}\frac{(-1)^k(3)!}{k!(3-k)!}
\mathrm{e}^{(2-k)i\omega/2^d}$ in \eqref{L6} contributes to forming
\begin{equation*}\hat{\mathbf{\Psi}}_{n_1^\ast=3,\boldsymbol{m}_1^\ast=\emptyset;\boldsymbol{k}_1^\ast =-4,\boldsymbol{\varkappa}_1^\ast=-5}(\omega/2^{d-1})=\frac{\gamma_{3}} {2}\,\bigl|\mathcal{A}_{3}(\omega/2^d)\bigr|^2\sum_{k=0}^{4}\frac{(-1)^k(4)!}{k!(4-k)!}
\mathrm{e}^{(3-k)i\omega/2^d}\,\widehat{B}_{3}(\omega/2^d)\mathrm{e}^{-5i\omega/2^{d-1}} \end{equation*} correspondingly to ${\mathbf{\Phi}}_{n_1^\ast=3,\boldsymbol{k}_1^\ast=-4}$ in \eqref{L7}. We have already added even two more difference iterations to our construction (see \eqref{L5}). The second one of them will be used for reducing 
$I_{\Omega_1^+}^{1}f$ to $f$. Indeed, by \eqref{diff},
\begin{multline}\label{L12}\int_0^\infty\int_0^\infty I_{\Omega_1^+}^{1}f(x_1,y_2)\Bigl[B_2(2^{d}x_1-\tau_1+2)-2B_2(2^{d}x_1-\tau_1+1)\\+B_2(2^{d}x_1-\tau_1)\Bigr]{\mathbf{\Psi}}_{3,\emptyset,-4,-9}(2^{d-1}y_2-\tau_2)\,dx_1dy_2\\=2^{-d}
\int_0^\infty\int_0^\infty f(y_1,y_2)\Bigl[B_3(2^{d}x_1-\tau_1+2)-B_3(2^{d}x_1-\tau_1+1)\Bigr]{\mathbf{\Psi}}_{3,\emptyset,-4,-9}(2^{d-1}y_2-\tau_2)\,dx_1dy_2.\end{multline} As for the product $\bigl|\mathcal{A}_{2}(\omega/2)\bigr|^2\,\bigl|\mathbf{A}_{3}(-\omega/2)\bigr|^{2}$ 
in \eqref{L6}, which can be re--written in the form
\begin{multline*}\bigl|\mathcal{A}_{2}(\omega/2)\bigr|^2\,\bigl|\mathbf{A}_{3}(-\omega/2)\bigr|^{2}=\bigl|1-\mathrm{e}^{i\omega/2}r_1(2)\bigr|^2\bigl|1-\mathrm{e}^{i\omega/2}r_2(2)\bigr|^2\\\times \bigl|1+\mathrm{e}^{i\omega/2}r_1(3)\bigr|^2\bigl|1+\mathrm{e}^{i\omega/2}r_2(3)\bigr|^2\bigl|1+\mathrm{e}^{i\omega/2}r_3(3)=:\sum_{j=-5}^5\boldsymbol{\lambda}_j\cdot \mathrm{e}^{ji\omega/2},\end{multline*} it will simply add to our construction of $\mathbf{\Psi}^\ast_{1(d-1)\tau}:=\widetilde{\mathbf{\Psi}}_{1(d-1)\tau_1}^\ast\widetilde{\mathbf{\Psi}}_{1(d-1)\tau_2}$ with $$\widetilde{\mathbf{\Psi}}_{1(d-1)\tau_1}^\ast=(\mathbf{\Lambda}^{''}_3)^{-1}{\mathbf{\Psi}}_{n_1^\ast=3,\boldsymbol{m}_1^\ast=\emptyset;\boldsymbol{k}_1^\ast =-4,\boldsymbol{\varkappa}_1^\ast=-5}(2^{d-1}\cdot-\tau_1)$$ a finite linear combination of half--integer shifts of ${\mathbf{\Psi}}_{n_1^\ast=3,\boldsymbol{m}_1^\ast=\emptyset;\boldsymbol{k}_1^\ast =-4,\boldsymbol{\varkappa}_1^\ast=-5}$ with some coefficients $\{\boldsymbol{\lambda}_j\}\subset\mathbb{R}$ depending of $r_j(2)$, $j=1,2$, and $r_j(3)$, $j=1,2,3$.
We obtain, by continuing the \eqref{L5} with respect to \eqref{L12}, 
\begin{align*}&\mathbf{\Lambda}^{''}_3
\mathbf{\Lambda}^{''}_2\langle I_{\Omega_1^+}^{1}f,\nu_{1(d-1)\tau_1}\widetilde{\mathbf{\Psi}}_{1(d-1)\tau_2}\rangle\\=&\frac{1}{2^{d}}\frac{\gamma_2}{\gamma_3}\sum_{j=-5}^5\boldsymbol{\lambda}_j\int_0^\infty\int_0^\infty f(y_1,y_2){\mathbf{\Psi}}_{n_1^\ast=3,\boldsymbol{m}_1^\ast=\emptyset;\boldsymbol{k}_1^\ast =-4,\boldsymbol{\varkappa}_1^\ast=-5}(y_1-\tau_1-j){\mathbf{\Psi}}_{3,\emptyset,-4,-9}(2^{d-1}x_2-\tau_2)\,dx_1dy_2\\=:& 
\frac{\bigl(\mathbf{\Lambda}^{''}_3\bigr)^2}{2^{d}}\frac{\gamma_2}{\gamma_3}\sum_{j=-5}^5\boldsymbol{\lambda}_j\cdot\langle f,\widetilde{\mathbf{\Psi}}^\ast_{1(d-1)(\tau_1+j)}\widetilde{\mathbf{\Psi}}_{1(d-1)\tau_2}\rangle=:\frac{\bigl(\mathbf{\Lambda}^{''}_3\bigr)^2}{2^{d}}\frac{\gamma_2}{\gamma_3}\sum_{j=-5}^5\boldsymbol{\lambda}_j\cdot\langle f,{\mathbf{\Psi}}^\ast_{1(d-1)(\tau_1+j,\tau_2)}\rangle.\end{align*} This means that the following estimate is valid  
\begin{equation*}
\bigl|\langle I_{\Omega_1^+}^{1}f,\nu_{1(d-1)\tau_1}\widetilde{\mathbf{\Psi}}_{1(d-1)\tau_2}\rangle\bigr|\le
2^{-d}\frac{\gamma_2\cdot\mathbf{\Lambda}^{''}_3}{\gamma_3\cdot\mathbf{\Lambda}^{''}_2}\sum_{j=-5}^5|\boldsymbol{\lambda}_j|\cdot\bigl|\langle f,{\mathbf{\Psi}}^\ast_{1(d-1)(\tau_1+j,\tau_2)}\rangle\bigr|.\end{equation*} This yields for the first term in \eqref{L2}, in combination with \eqref{L8} and \eqref{L9}, \begin{multline*}
\Bigl(\mathbf{W}_{I_{\Omega_1^+}^{1}f}^{1(d-1)}\Bigr)^\frac{1}{q}\lesssim 2^{2d}\,\mathscr{N}_{\Omega_1^+}^1(d)\biggl(\sum_{\tau_2\in\mathbb{Z}}\sum_{\tau_1\in\mathbb{Z}}\bigl|\langle I_{\Omega_1^+}^{1}f,\nu_{1(d-1)\tau_1}\widetilde{\mathbf{\Psi}}_{1(d-1)\tau_2}\rangle\bigr|^p
\int_{Q_{d\tau_1}}\tilde{u}_1\,\int_{Q_{d\tau_2}}w_2\biggr)^\frac{1}{p}\\\le 2^d\,{\mathscr{N}_{\Omega_1^+}^1(d)}\cdot\frac{\gamma_2\cdot\mathbf{\Lambda}^{''}_3}{\gamma_3\cdot\mathbf{\Lambda}^{''}_2}\,\sum_{j=-5}^5|\boldsymbol{\lambda}_j|\cdot\biggl(\sum_{\tau_1\in\mathbb{Z}}\sum_{\tau_2\in\mathbb{Z}}\bigl|\langle f,{\mathbf{\Psi}}^\ast_{1(d-1)(\tau_1+j,\tau_2)}\rangle\bigr|^p
\int_{Q_{d\tau_1}}\tilde{u}_1\,\int_{Q_{d\tau_2}}w_2\biggr)^\frac{1}{p}. \end{multline*} On the strength of \eqref{delta2},
$$\int_{Q_{d\tau_1}}\tilde{u}_1\le \int_{\cup_jQ_{d(\tau_1+j)}}\tilde{u}_1\lesssim \int_{Q_{d(\tau_1+j)}}\tilde{u}_1%\quad\textrm{for each }\quad j\in\{-7,\ldots,0,\ldots,7\}
.$$ Thus, we have for each $j$
\begin{equation*}
\sum_{\tau_1\in\mathbb{Z}}\sum_{\tau_2\in\mathbb{Z}}\bigl|\langle f,{\mathbf{\Psi}}^\ast_{1(d-1)(\tau_1+j,\tau_2)}\rangle\bigr|^p
\int_{Q_{d\tau_1}}\tilde{u}_1\,\int_{Q_{d\tau_2}}w_2\lesssim \sum_{\tau_1\in\mathbb{Z}}\sum_{\tau_2\in\mathbb{Z}}\bigl|\langle f,{\mathbf{\Psi}}^\ast_{1(d-1)(\tau_1+j,\tau_2)}\rangle\bigr|^p
\int_{Q_{d(\tau_1+j)}}\tilde{u}_1\,\int_{Q_{d\tau_2}}w_2,
\end{equation*} which implies
\begin{multline}\label{L10}
\Bigl(\mathbf{W}_{I_{\Omega_1^+}^{1}f}^{1(d-1)}\Bigr)^\frac{1}{q}\lesssim  2^d\,{\mathscr{N}_{\Omega_1^+}^1(d)}\cdot\frac{\gamma_2\cdot\mathbf{\Lambda}^{''}_3}{\gamma_3\cdot\mathbf{\Lambda}^{''}_2}\Bigl(\sum_{j=-5}^5|\boldsymbol{\lambda}_j|\Bigr
)\biggl(\sum_{\tau\in\mathbb{Z}^2}\bigl|\langle f,{\mathbf{\Psi}}^\ast_{1(d-1)\tau)}\rangle\bigr|^p
\int_{Q_{d\tau_1}}\tilde{u}_1\,\int_{Q_{d\tau_2}}w_2\biggr)^\frac{1}{p}\\\lesssim 
2^d\,{\mathscr{N}_{\Omega_1^+}^1(d)}\biggl(\sum_{\tau\in\mathbb{Z}^2}\bigl|\langle f,{\mathbf{\Psi}}^\ast_{1(d-1)\tau)}\rangle\bigr|^p
\int_{Q_{d\tau}}{u}\biggr)^\frac{1}{p}%=\frac{\mathscr{N}_{\Omega_1^%+}^1(d)}{2^{d+2d/p}}\biggl(\int_{\mathbb{R}^2}\Bigl(\sum_{\tau\in
%\mathbb{Z}^2} \bigl|\langle f,{\mathbf{\Psi}}^\ast_{1(d-1)\tau)}%\rangle\bigr|\chi_{d\tau}^{(p)}(x)\Bigr)^p u(x)\,dx\biggr)^\frac{1}
%{p}
.\end{multline}
Analogously, one can make a similar estimate for the second term in \eqref{L2} 
\begin{equation*}
\Bigl(\mathbf{W}_{I_{\Omega_1^+}^{1}f}^{2(d-1)}\Bigr)^\frac{1}{q}\lesssim  2^d\,{\mathscr{N}_{\Omega_1^+}^1(d)}\biggl(\sum_{\tau\in\mathbb{Z}^2}\bigl|\langle f,{\mathbf{\Psi}}^\ast_{2(d-1)\tau)}\rangle\bigr|^p
\int_{Q_{d\tau}}{u}\biggr)^\frac{1}{p}\end{equation*}
with $\mathbf{\Psi}^\ast_{2(d-1)\tau}:=\widetilde{\mathbf{\Psi}}_{2(d-1)\tau_1}^\ast\widetilde{\mathbf{\Phi}}_{2(d-1)\tau_2}$, where $\widetilde{\mathbf{\Psi}}_{2(d-1)\tau_1}^\ast=\widetilde{\mathbf{\Psi}}_{1(d-1)\tau_1}^\ast$. The estimate for the third term
\begin{equation*}
\Bigl(\mathbf{W}_{I_{\Omega_1^+}^{1}f}^{3(d-1)}\Bigr)^\frac{1}{q}\lesssim  {\mathscr{M}_{\Omega_1^+}^1(d)}\biggl(\sum_{\tau\in\mathbb{Z}^2}\bigl|\langle f,{\mathbf{\Psi}}^\ast_{3(d-1)\tau)}\rangle\bigr|^p
\int_{Q_{d\tau}}{u}\biggr)^\frac{1}{p}\le 2^d\,{\mathscr{M}_{\Omega_1^+}^1(d)}\biggl(\sum_{\tau\in\mathbb{Z}^2}\bigl|\langle f,{\mathbf{\Psi}}^\ast_{3(d-1)\tau)}\rangle\bigr|^p
\int_{Q_{d\tau}}{u}\biggr)^\frac{1}{p}\end{equation*}
with $\mathbf{\Psi}^\ast_{3(d-1)\tau}:=\widetilde{\mathbf{\Phi}}_{3(d-1)\tau_1}^\ast\widetilde{\mathbf{\Psi}}_{3(d-1)\tau_2}$, where $$\widetilde{\mathbf{\Phi}}_{3(d-1)\tau_1}^\ast=(\mathbf{\Lambda}^{'}_3)^{-1}{\mathbf{\Phi}}_{n_1^\ast=3,\boldsymbol{k}_1^\ast =-4}(2^{d-1}\cdot-\tau_1),$$ 
can be made similarly to \eqref{L7}, by adding scaling parameter $d\in\mathbb{N}$ into the ${\mathbf{\Phi}}_{n_1^\ast=3,\boldsymbol{k}_1^\ast =-4}$. We now have
\begin{equation*}\sum_{i=1}^3\mathbf{W}_{I_{\Omega_1^+}^{1}f}^{i(d-1)}\lesssim 2^{d(1-2/p)q}\,{\Bigl(\max\bigl\{\mathscr{M}_{\Omega_1^+}^1(d),\mathscr{N}_{\Omega_1^+}^1(d)\bigr\}\Bigr)^q}
\sum_{i=1}^{3}\biggl(\int_{\mathbb{R}^2}\Bigl(\sum_{\tau\in\mathbb{Z}^2} \bigl|\langle f,\mathbf{\Psi}^\ast_{i(d-1)\tau}\rangle\bigr|\chi_{d\tau}^{(p)}(x)\Bigr)^p u(x)\,dx\biggr)^\frac{q}{p}\end{equation*} for $d\in\mathbb{N}$, that is \begin{equation*}\Bigl(\mathbf{W}_{I_{\Omega_1^+}^{1}f}\Bigr)^q\lesssim \bigl[C_{\Omega_1^+}^{1}\bigr]^q\sum_{d\in\mathbb{N}}2^{d(2-2/p)q}
\sum_{i=1}^{3}\biggl(\int_{\mathbb{R}^2}\Bigl(\sum_{\tau\in\mathbb{Z}^2} \bigl|\langle f,\mathbf{\Psi}^\ast_{i(d-1)\tau}\rangle\bigr|\chi_{d\tau}^{(p)}(x)\Bigr)^p u(x)\,dx\biggr)^\frac{q}{p}\end{equation*} and \begin{equation*}
\Bigl(\mathbf{V}_{I_{\Omega_1^+}^{1}f}\Bigr)^p\lesssim\bigl[\mathscr{M}_{\Omega_1^+}^1(0)\bigr]^p\int_{\mathbb{R}^2}\Bigl(\sum_{\tau\in\mathbb{Z}^2}\bigl|\langle f,\mathbf{\Phi}^\ast_{\tau}\rangle\bigr|\chi_{0\tau}^{(p)}(x)\Bigr)^pu(x)\,dx.\end{equation*} This yields, respectively to \eqref{L11}, with $\lambda^\ast$ of the form \eqref{seq0},
\begin{multline*}\|\lambda\|_{b^{0,w}_{pq}}\lesssim \mathscr{M}_{\Omega_1^+}^1(0)\biggl(\int_{\mathbb{R}^2}\Bigl(\sum_{\tau\in\mathbb{Z}^2}\bigl|\langle f,\mathbf{\Phi}^\ast_{\tau}\rangle\bigr|\chi_{0\tau}^{(p)}(x)\Bigr)^pu(x)\,dx\biggr)^{\frac{1}{p}}\\+
C_{\Omega_1^+}^{1}\Biggl(\sum_{d=1}^\infty 2^{d(2-2/p)q}\sum_{i=1}^{3}\biggl(\int_{\mathbb{R}^2}\Bigl(\sum_{\tau\in\mathbb{Z}^2} \bigl|\langle f,\mathbf{\Psi}^\ast_{i(d-1)\tau}\rangle\bigr|\chi_{d\tau}^{(p)}(x)\Bigr)^p u(x)\,dx\biggr)^\frac{q}{p}\Biggr)^\frac{1}{q}\lesssim C_{\Omega_1^+}^{1}\|\lambda^\ast\|_{b^{1,u}_{pq}},\end{multline*} and the validity of \eqref{ineqEx} is now performed basing on the statement of Theorem \ref{main'}.\end{example}

Similar to \eqref{ineqEx} estimates for some particular cases of Triebel--Lizorkin spaces were considered in \cite{Oin2011}.

Observe that the conditions \eqref{Md} and \eqref{Nd} can be significantly released for many cases of weights $u$ and $w$. In particular, if $p=2$ and $\int_{Q_{d\tau}}v_1\approx\,2^{-d}v_1(\tau/2^d)$ for the both $v_1=u_1$ and $v_1=w_1$, then $$\mathscr{M}_{\Omega_1^+}^1(d)\equiv \mathscr{M}_{\Omega_1^+}^1(0)\quad\textrm{and}\quad\mathscr{N}_{\Omega_1^+}^1({d+1})\equiv \mathscr{N}_{\Omega_1^+}^1(1)\qquad\forall d\in\mathbb{N}_0.$$ Moreover, $\mathscr{N}_{\Omega_1^+}^1({d})<\infty$ yields $\mathscr{M}_{\Omega_1^+}^1({d})<\infty$ if $w(x_1)=(1+|x_1|)^{-\alpha}$, $u(x_1)=(1+|x_1|)^{p+2-\alpha}$, $\alpha>p+2$.

In general case, the next two theorems regulate relations between norms of images and pre--images of integration operators $I_{\Omega_k^\pm}^{m_k}$ of order $m_k$ in $A^{s,w}_{pq}(\mathbb{R}^N)$ with respect to variable $x_k$, $k\in\{1,\ldots,N\}$. 

\begin{theorem}\label{ImagesA} Let $1<p<\infty$, $0<q\le\infty$, $s\in\mathbb{R}$, weights $u,w\in\mathscr{A}_\infty^\loc$ and $f\in L_1^\textrm{loc}(\mathbb{R}^N)$. For $m_l\in\mathbb{N}$ and $c_l\in\mathbb{R}$ with $l\in\{1,\ldots,N\}$ let $I_{\Omega_l^+}^{m_l}$ be defined by \eqref{left}. Suppose that $f(x_{x_l})\equiv 0$ for $x_l\in(-\infty,c_l)$. \\ {\rm (i)} Assume $u,w$ are of product type (factorisable), that is $v(x_1,\ldots,x_N)=v_1(x_1)\cdot\ldots\cdot v_N(x_N)$ for the both $v=u$ and $v=w$ with one variable functions $v_l(x_l)$, $l=1,\ldots,N$; besides, $u(x_1,\ldots,x_N)=w_1(x_1)\cdot\ldots u_l(x_l)\ldots\cdot w_N(x_N)$. Then
$I_{\Omega_l^+}^{m_l}f\in {B}_{pq}^{s,w}(\mathbb{R}^N)$ if $f\in {B}_{pq}^{s+m_l,u}(\mathbb{R}^N)$ provided
\begin{align}\label{Md+}\mathscr{M}_{\Omega_l^+}^{m_l}(d):=&\frac{1}{2^{dm_l}}\Biggl[\sup_{\tau_l\ge [c_l]}\biggl(\sum_{r\ge\tau_l}(r-\tau_l+1)^{p(m_l-1)}\int_{Q_{dr}^{[c_l]}}w_l\biggr)^{\frac{1}{p}}\biggl(\sum_{[c_l]\le r\le\tau_l} \Bigl(\int_{Q_{dr}^{[c_l]}}\bar{u}_l\Bigr)^{1-p'}\biggr)^{\frac{1}{p'}}\\+&\sup_{\tau_l\ge [c_l]}\biggl(\sum_{r\ge\tau_l}\int_{Q_{dr}^{[c_l]}}w_l\biggr)^{\frac{1}{p}}\biggl(\sum_{[c_l]\le r\le\tau_l}(\tau_l-r+1)^{p'(m_l-1)} \Bigl(\int_{Q_{dr}^{[c_l]}}\bar{u}_l\Bigr)^{1-p'}\biggr)^{\frac{1}{p'}}\Biggr]<\infty\quad\forall\ d\in\mathbb{N}_0,\nonumber
\\\label{Nd+}\mathscr{N}_{\Omega_l^+}^{m_l}(d):=&\frac{1}{2^{2dm_l}}\Biggl[\sup_{\tau_l\ge [c_l]}\biggl(\sum_{r\ge\tau_l}(r-\tau_l+1)^{p(2m_l-1)}\int_{Q_{dr}^{[c_l]}}w_l\biggr)^{\frac{1}{p}}\biggl(\sum_{[c_l]\le r\le\tau_l} \Bigl(\int_{Q_{dr}^{[c_l]}}\tilde{u}_l\Bigr)^{1-p'}\biggr)^{\frac{1}{p'}}\\&+\sup_{\tau_l\ge [c_l]}\biggl(\sum_{r\ge\tau_l}\int_{Q_{dr}^{[c_l]}}w_l\biggr)^{\frac{1}{p}}\biggl(\sum_{[c_l]\le r\le\tau_l} (\tau_l-r+1)^{p'(2m_l-1)}\Bigl(\int_{Q_{dr}^{[c_l]}}\tilde{u}_l\Bigr)^{1-p'}\biggr)^{\frac{1}{p'}}\Biggr]<\infty\quad\forall \ d\in\mathbb{N},\nonumber
\end{align} where ${Q_{dr}^{[c_l]}}:=\Bigl[{\frac{r-[c_l]-1/2}{2^d}},{\frac{r-[c_l]+1/2}{2^d}}\Bigr]$,
%$\bar{\tau}_l=\begin{cases}-c_l-n_0, & [c_l]=c_l\\ -[c_l]-n_0-1, &[c_l]\not=c_l\end{cases}$ and $\tilde{\tau}_l=\begin{cases}-5n_0/2-m_l, &[5n_0/2]=5n_0/2\\
%-[5n_0/2]-m_l-1, &[5n_0/2]\not=5n_0/2\end{cases}$, and $\tau_l\in\mathbb{Z}$ is the $l-$th component of $\tau=(\tau_1,\ldots,\tau_N)\in\mathbb{Z}^N$; also 
$\bar{u_l}\le u_l$ and $\tilde{u}_l\le u_l$ on $\mathbb{R}$. Moreover, \begin{equation}\label{more+}\|I_{\Omega_l^+}^{m_l}f\|_{{B}_{pq}^{s,w}(\mathbb{R}^N)}\lesssim C_{\Omega_l^+}^{m_l}\|f\|_{{B}_{pq}^{s+m_l,u}(\mathbb{R}^N)},\qquad\textrm{where }\ C_{\Omega_l^+}^{m_l}:=\sup_{d\in\mathbb{N}_0}\bigl[\mathscr{M}_{\Omega_l^+}^{m_l}(d)+\mathscr{N}_{\Omega_l^+}^{m_l}(d+1)\bigr].\end{equation}
{\rm (ii)} If $I_{\Omega_l^+}^{m_l}f\in {A}_{pq}^{s,w}(\mathbb{R}^N)$ then $f\in {A}_{pq}^{s-m_l,w}(\mathbb{R}^N)$, besides,
$$\|f\|_{{A}_{pq}^{s-m_l,w}(\mathbb{R}^N)}\lesssim\|I_{\Omega_l^+}^{m_l}f\|_{{A}_{pq}^{s,w}(\mathbb{R}^N)}.$$
\end{theorem}
\begin{proof}
To perform the part (i) we assume that $f\in{B}_{pq}^{s+m_l,u}(\mathbb{R}^N)$ and choose a natural number $n^\ast_0$ satisfying \eqref{condB} with respect to the both weights $w$ and $u$. 
Then, on the strength of Theorem \ref{main'},
\begin{equation}\label{krysha}\|I_{\Omega_l^+}^{m_l}f\|_{B^{s,w}_{pq}(\mathbb{R}^N)}\approx\|\lambda\|_{b_{pq}^{s,w}}\end{equation} with $\lambda=\{\lambda_{00\tau}\}_{\tau\in\mathbb{Z}^N}\cup\{\lambda_{id\tau}\}_{{i=1,\ldots,2^N-1};\,{d\in\mathbb{N};\,\tau\in\mathbb{Z}^N}}$ of the form \eqref{asty}, with $I_{\Omega_l^+}^{m_l}f$ instead of $f$, where the system \eqref{ForRepr'} of functions $\mathbf{\Phi}_{\tau}$ and $\mathbf{\Psi}_{i(d-1)\tau}$ is generated by %\eqref{wavelets_main} 
$N$ number of one--dimensional systems \eqref{vazhnoo}. For our convenience, we assume that \eqref{chi} in the definition of $b_{pq}^{s,w}$ is the $p-$normalised characteristic function of the cube $Q_{d(\tau_1,\ldots,\tau_l-[c_l],\ldots,\tau_N)}$ having the $l-$th shifted component in comparison with $Q_{d\tau}$.
 
We choose parameters in \eqref{vazhnoo} for all $k=1,\ldots,N$ as follows: $n_l=n_0^\ast$, $n_k=n_0^\ast+m_l$ for $k\not= l$, $$\boldsymbol{\kappa}_l:=\min\Bigl\{r\le[c_l]\colon [c_l,+\infty)\cap \textrm{supp}\,\mathbf{\Phi}_{n_l=n_0^\ast,r}\not=\emptyset\Bigr\}\qquad\textrm{and}\quad\boldsymbol{\kappa}_k=0\quad (k\not=l),$$
$$\boldsymbol{\varkappa}_l:=\min\Bigl\{r\le[c_l]\colon [c_l,+\infty)\cap \textrm{supp}\,\mathbf{\Psi}_{n_l=n_0^\ast,\boldsymbol{m}_l(\Bbbk_l),\boldsymbol{\kappa}_l,r}\not=\emptyset\Bigr\}\qquad\textrm{and}\quad\boldsymbol{\varkappa}_k=0\quad (k\not=l).$$ 
We suppose all $\Bbbk_k=0$ except $\Bbbk_l=1$ and put $\boldsymbol{m}_l(\Bbbk_l)=n_0^\ast+m_l$. This means that for each of the $N$ variables $x_k$, $k=1,\ldots,N$, we operate with one variable functions ${\mathbf{\Phi}}_{n_k,\boldsymbol{k}_k}$ and ${\mathbf{\Psi}}_{n_k,\boldsymbol{m}_k(\Bbbk_k),\boldsymbol{k}_k,\boldsymbol{\varkappa}_k}$ such that (see \eqref{MMM0} and \eqref{MMM}) \begin{equation*}\label{L1'}\widehat{\mathbf{\Phi}}_{n_k,\boldsymbol{k}_k}(\omega)=\widehat{\mathbf{\Phi}}_{r_1,\ldots,r_{n_k};\boldsymbol{k}_k}(\omega)=\beta_{{n_k}}\,\widehat{B}_{{n_k},\boldsymbol{\kappa}_k}(\omega)\end{equation*} and 
\begin{multline*}%\label{L4'}
\widehat{\mathbf{\Psi}}_{n_k,\boldsymbol{m}_k(\Bbbk_k),\boldsymbol{k}_k,\boldsymbol{\varkappa}_k}(\omega):=\widehat{\mathbf{\Psi}}_{r_1,\ldots,r_{n_k};\boldsymbol{m}_k(\Bbbk_k),\boldsymbol{k}_k,\boldsymbol{\varkappa}_k}(\omega)\nonumber\\=
\frac{\gamma_{n_k}} {2}\,\bigl|\mathcal{A}_{{n_k}}(\omega/2)\bigr|^2\,\bigl|\mathscr{A}_{\boldsymbol{m}_k}(\omega)\bigr|^{2\Bbbk_k}\sum_{l=0}^{{n_k}+1}\frac{(-1)^l({n_k}+1)!}{l!({n_k}+1-l)!}
\mathrm{e}^{({n_k}-l)i\omega/2}\,\widehat{B}_{{n_k},\boldsymbol{\varkappa}_k}(\omega/2).\end{multline*} 
Further considerations are similar to those in Example \ref{Exam}. Starting from the right hand side of \eqref{krysha} one should estimate it from above by $\|\lambda^\ast\|_{b_{pq}^{s+m_l,u}}$ with \begin{equation*}\label{seq0'}\lambda^\ast_{00\tau}=\langle f,\mathbf{\Phi}^\ast_{\tau}\rangle\quad(\tau\in\mathbb{Z}^N),\qquad   \lambda^\ast_{id\tau}=2^{d}\langle f,\mathbf{\Psi}^\ast_{i(d-1)\tau}\rangle\quad(i=1,\ldots,2^{N}-1;\,d\in\mathbb{N};\,\tau\in\mathbb{Z}^N),\end{equation*} where functions $\mathbf{\Phi}^\ast_{\tau}$ and $\mathbf{\Psi}_{i(d-1)\tau}^\ast$ differ from $\mathbf{\Phi}_{\tau}$ and $\mathbf{\Psi}_{i(d-1)\tau}$ by the components at the $l-$th place only:
$$\widetilde{\mathbf{\Phi}}^\ast_{\tau_l}=\bigl(\mathbf{\Lambda}'_{{n_0^\ast}+m_l}\bigr)^{-1}{\mathbf{\Phi}}_{n_0^\ast+m_l,\boldsymbol{k}_l^\ast=-\boldsymbol{\kappa}_l-m_l}(\cdot-\tau_l)$$ and $$
\widetilde{\mathbf{\Psi}}_{i(d-1)\tau_l}^\ast=\bigl(\mathbf{\Lambda}^{''}_{{n_0^\ast}+m_l}\bigr)^{-1}2^{(d-1)/N}{\mathbf{\Psi}}_{n_l^\ast+m_l,\boldsymbol{m}_l^\ast=\emptyset,\boldsymbol{k}_l^\ast =-\boldsymbol{\kappa}_l-m_l,\boldsymbol{\varkappa}_l^\ast=-\boldsymbol{\varkappa}_l-2m_l}(2^{d-1}\cdot-\tau_1)
$$ or $$
\widetilde{\mathbf{\Phi}}_{i(d-1)\tau_l}^\ast=\bigl(\mathbf{\Lambda}'_{{n_0^\ast}+m_l}\bigr)^{-1}2^{(d-1)/N}{\mathbf{\Phi}}_{n_l^\ast+m_l,\boldsymbol{k}_l^\ast =-\boldsymbol{\kappa}_l-m_l}(2^{d-1}\cdot-\tau_1).
$$ From this (i) follows by applying Theorem \ref{main'} with $n_0=n_0^\ast+m_l$.

The part (ii) for natural $s$ follows from \eqref{DifInt}.
To prove this assertion for $s\in\mathbb{R}$ assume that $I_{\Omega_l^+}^{m_l}f\in {A}_{pq}^{s,w}(\mathbb{R}^N)$ and fix some natural number $n_\ast$ fitting the condition \eqref{condB} with respect to $w$ in the case ${A}_{pq}^{s,w}(\mathbb{R}^N)={B}_{pq}^{s,w}(\mathbb{R}^N)$ or satisfying \eqref{condF} if ${A}_{pq}^{s,w}(\mathbb{R}^N)={F}_{pq}^{s,w}(\mathbb{R}^N)$. By Theorem \ref{main'},
\begin{equation*}\|I_{\Omega_l^+}^{m_l}f\|_{A^{s,w}_{pq}(\mathbb{R}^N)}\approx\|\lambda\|_{a_{pq}^{s,w}}\end{equation*} with $\lambda=\{\lambda_{00\tau}\}_{\tau\in\mathbb{Z}^N}\cup\{\lambda_{id\tau}\}_{{i=1,\ldots,2^N-1};\,{d\in\mathbb{N};\,\tau\in\mathbb{Z}^N}}$, defined by \eqref{asty} for $I_{\Omega_l^+}^{m_l}f$ instead of $f$, and represented in terms of functions $\mathbf{\Phi}_{\tau}$ and $\mathbf{\Psi}_{i(d-1)\tau}$ generated by %\eqref{wavelets_main} 
$N$ number of one--dimensional systems \eqref{vazhnoo} with $\Bbbk_k=0$ for all $k=1,\ldots,N$ and $n_k=n_\ast$ for any $k=1,\ldots,N$. These one--variable functions ${\mathbf{\Phi}}_{n_k,\boldsymbol{\kappa}_k}$ and ${\mathbf{\Psi}}_{n_k;\boldsymbol{m}_k(\Bbbk_k),\boldsymbol{\kappa}_k,\boldsymbol{\varkappa}_k}$, originating $\mathbf{\Phi}_{\tau}$ and $\mathbf{\Psi}_{i(d-1)\tau}$, have the following forms: \begin{equation}\label{L1''}\widehat{\mathbf{\Phi}}_{n_k=n_\ast,\boldsymbol{\kappa}_k}(\omega)=\widehat{\mathbf{\Phi}}_{r_1,\ldots,r_{n_\ast};\boldsymbol{\kappa}_k}(\omega)=\beta_{{n_\ast}}\,\widehat{B}_{{n_\ast},\boldsymbol{\kappa}_k}(\omega),\end{equation} 
\begin{align}\label{L4''}
\widehat{\mathbf{\Psi}}_{n_k={n_\ast},\boldsymbol{m}_k(\Bbbk_k)=\emptyset,\boldsymbol{\kappa}_k,\boldsymbol{\varkappa}_k}(\omega):&=\widehat{\mathbf{\Psi}}_{r_1,\ldots,r_{n_\ast};\boldsymbol{m}_k(\Bbbk_k)=\emptyset,\boldsymbol{\kappa}_k,\boldsymbol{\varkappa}_k}(\omega)\nonumber\\&=
\frac{\gamma_{n_\ast}} {2}\,\bigl|\mathcal{A}_{{n_\ast}}(\omega/2)\bigr|^2\,\sum_{k=0}^{{n_\ast}+1}\frac{(-1)^k({n_\ast}+1)!}{k!({n_\ast}+1-k)!}
\mathrm{e}^{({n_\ast}-k)i\omega/2}\,\widehat{B}_{{n_\ast},\boldsymbol{\varkappa}_k}(\omega/2).\end{align}
Denote $\mathbf{\Psi}_{0(-1)\tau}:=\mathbf{\Phi}_\tau$. By construction, any of $\mathbf{\Psi}_{i(d-1)\tau}$, $i=0,\ldots, 2^N-1$, $d\in\mathbb{N}_0$, $\tau\in\mathbb{Z}^N$ in \eqref{ForRepr'} is an $N-$terms product of $\widetilde{\mathbf{\Phi}}_{n_\ast,\boldsymbol{\kappa}_k}(2^dx_k-\tau_{k})$ and/or $\widetilde{\mathbf{\Psi}}_{n_\ast;\emptyset,\boldsymbol{\kappa}_m,\boldsymbol{\varkappa}_m}(2^dx_m-\tau_{m})$, where $k,m$ take different values from $\{1,\ldots,N\}$, $\boldsymbol{\kappa}_k,\boldsymbol{\varkappa}_k,\boldsymbol{\kappa}_m,\boldsymbol{\varkappa}_m$ are fixed and $\tau_{k},\tau_{m}$ run over integers. We assume $\boldsymbol{\kappa}_k=\boldsymbol{\varkappa}_k=\boldsymbol{\kappa}_m=\boldsymbol{\varkappa}_m=0$ for all $k,m\not=l$, while $\boldsymbol{\kappa}_l$ and $\boldsymbol{\varkappa}_l$ are supposed to be arbitrary.

Our goal is to obtain an estimate from below of the norm $\|\lambda\|_{a_{pq}^{s,w}}$ by another norm $\|\lambda_\ast\|_{a_{pq}^{s+m_l,w}}$ with $\lambda_\ast=\{\lambda_{00\tau}\}_{\tau\in\mathbb{Z}^N}\cup\{\lambda_{id\tau}\}_{{i=1,\ldots,2^N-1};\,{d\in\mathbb{N};\,\tau\in\mathbb{Z}^N}}$, which is  different to $\lambda$ and is representing not $I_{\Omega_l^+}^{m_l}f$ but $f$ in $A^{s+m_l,w}_{p,q}(\mathbb{R}^N)$. The $\lambda_\ast$, in comparison to $\lambda$, are formulated in terms of $\mathbf{\Psi}_{i(d-1)\tau}$, $i=0,\ldots, 2^N-1$, $d\in\mathbb{N}_0$, $\tau\in\mathbb{Z}^N$, having functions $\widetilde{\mathbf{\Phi}}_{n_\ast+m_l,\boldsymbol{\kappa}_l-m_l}(2^dx_l-\tau_{l})$ and/or $\widetilde{\mathbf{\Psi}}_{n_\ast+m_l;\boldsymbol{m}_l(\Bbbk_l)=n_\ast,\boldsymbol{\kappa}_l-m_l,\boldsymbol{\varkappa}_l-2m_l}(2^dx_l-\tau_{l})$ instead of $\widetilde{\mathbf{\Phi}}_{n_\ast,\boldsymbol{\kappa}_l}(2^dx_l-\tau_{l})$ and/or $\widetilde{\mathbf{\Psi}}_{n_\ast;\emptyset,\boldsymbol{\kappa}_l,\boldsymbol{\varkappa}_l}(2^dx_l-\tau_{l})$ staying at the $l-$th place:
\begin{equation}\label{L1'''}\widehat{\mathbf{\Phi}}_{n_\ast+m_l,-m_l}(\omega)=\widehat{\mathbf{\Phi}}_{r_1,\ldots,r_{n_\ast+m_l};-m_l}(\omega)=\beta_{{n_\ast+m_l}}\,\widehat{B}_{{n_\ast+m_l},-m_l}(\omega),\end{equation} 
\begin{align}\label{L4'''}
\widehat{\mathbf{\Psi}}_{{n_\ast}+m_l,\boldsymbol{m}_l(\Bbbk_l)=n_\ast,-m_l,-2m_l}(\omega):=&\widehat{\mathbf{\Psi}}_{r_1,\ldots,r_{n_\ast+m_l};\boldsymbol{m}_l(\Bbbk_l)=n_\ast,-m_l,-2m_l}(\omega)\nonumber\\=&
\frac{\gamma_{n_\ast+m_l}} {2}\,\bigl|\mathcal{A}_{{n_\ast}+m_l}(\omega/2)\bigr|^2\,\bigl|\mathscr{A}_{{n_\ast}}(\omega)\bigr|^2\\&\times\sum_{k=0}^{{n_\ast}+m_l+1}\frac{(-1)^k({n_\ast}+m_l+1)!}{k!({n_\ast}+m_l+1-k)!}
\mathrm{e}^{({n_\ast}+m_l+-k)i\omega/2}\,\widehat{B}_{{n_\ast}+m_l,-2m_l}(\omega/2).\nonumber\end{align} The rest one--variable functions constituting $\mathbf{\Psi}_{i(d-1)\tau}$ in $\lambda_\ast$ are the same as in $\lambda$.
Procedure of obtaining the required estimate is similar to the proof of the part (i), but should be done in the reverse direction. Therefore, it does not require fulfilment of the  condition $C_{\Omega_l^+}^{m_l}<\infty$. The result can be achieved by applying Theorem \ref{main'} finite (dependently on $n_\ast$ and $m_l$) number times, in order to make necessary %(for given $n_\ast$ and $m_l$) 
linear combinations of integer-- or half--shifts of ${\mathbf{\Phi}}_{n_k=n_\ast,\boldsymbol{\kappa}_k}$ and ${\mathbf{\Psi}}_{n_k={n_\ast},\boldsymbol{m}_k(\Bbbk_k)=\emptyset,\boldsymbol{\kappa}_k,\boldsymbol{\varkappa}_k}$ (varying parameters $\boldsymbol{\kappa}_l$ and $\boldsymbol{\varkappa}_l$ in \eqref{L1''} and \eqref{L4''}) resulting in \eqref{L1'''} and \eqref{L4'''} by integrating $B_{n_\ast}$ with respect to the $l-$th variable.
\end{proof}

Analogously, one can prove the following
\begin{theorem}\label{ImagesB} Let $1<p<\infty$, $0<q\le\infty$, $s\in\mathbb{R}$, weights $u,w\in\mathscr{A}_\infty^\loc$ and $f\in L_1^\textrm{loc}(\mathbb{R}^N)$. For $m_n\in\mathbb{N}$ and $c_n\in\mathbb{R}$, $n\in\{1,\ldots,N\}$, assume $I_{\Omega_n^-}^{m_n}$ is defined by \eqref{right}. Suppose that $f(x_{x_n})\equiv 0$ for $x_n\in(c_n,+\infty)$.
\\ {\rm (i)} Assume $u,w$ are of product type (factorisable), that is $v(x_1,\ldots,x_N)=v_1(x_1)\cdot\ldots\cdot v_N(x_N)$ for the both $v=u$ and $v=w$ with one variable functions $v_l(x_l)$, $l=1,\ldots,N$; besides, $u(x_1,\ldots,x_N)=w_1(x_1)\ldots u_n(x_n)\ldots w_N(x_N)$. Then
$I_{\Omega_n^-}^{m_n}f\in {B}_{pq}^{s,w}(\mathbb{R}^N)$ if $f\in {B}_{pq}^{s+m_n,u}(\mathbb{R}^N)$ provided
\begin{align}\label{Md-}\mathscr{M}_{\Omega_n^-}^{m_n}(d):=&\frac{1}{2^{dm_n}}\Biggl[\sup_{\tau_n\le [c_n]}\biggl(\sum_{r\le\tau_n}(\tau_n-r+1)^{p(m_n-1)}\int_{Q_{dr}^{[c_n]}}w_n\biggr)^{\frac{1}{p}}\biggl(\sum_{\tau_n\le r\le [c_n]} \Bigl(\int_{Q_{dr}^{[c_n]}}\bar{u}_n\Bigr)^{1-p'}\biggr)^{\frac{1}{p'}}\\+&\sup_{\tau_n\le [c_n]}\biggl(\sum_{r\le\tau_n}\int_{Q_{dr}^{[c_n]}}w_n\biggr)^{\frac{1}{p}}\biggl(\sum_{\tau_n\le r\le [c_n]}(r-\tau_n+1)^{p'(m_n-1)} \Bigl(\int_{Q_{dr}^{[c_n]}}\bar{u}_n\Bigr)^{1-p'}\biggr)^{\frac{1}{p'}}\Biggr]<\infty\quad\forall\ d\in\mathbb{N}_0,\nonumber\\\mathscr{N}_{\Omega_n^-}^{m_n}(d):=&\frac{1}{2^{2dm_n}}\Biggl[\sup_{\tau_n\le [c_n]}\biggl(\sum_{r\le\tau_n}(\tau_n-r+1)^{p(2m_n-1)}\int_{Q_{dr}^{[c_n]}}w_n\biggr)^{\frac{1}{p}}\biggl(\sum_{\tau_n\le r\le [c_n]} \Bigl(\int_{Q_{dr}^{[c_n]}}\tilde{u}_n\Bigr)^{1-p'}\biggr)^{\frac{1}{p'}}\label{Nd-}\\&+\sup_{\tau_n\le [c_n]}\biggl(\sum_{r\le\tau_n}\int_{Q_{dr}^{[c_n]}}w_n\biggr)^{\frac{1}{p}}\biggl(\sum_{\tau_n\le r\le [c_n]} (r-\tau_n+1)^{p'(2m_n-1)}\Bigl(\int_{Q_{dr}^{[c_n]}}\tilde{u}_n\Bigr)^{1-p'}\biggr)^{\frac{1}{p'}}\Biggr]<\infty\quad\forall\ d\in\mathbb{N},\nonumber\end{align} where ${Q_{dr}^{[c_n]}}:=\Bigl[{\frac{r-[c_n]-1/2}{2^d}},{\frac{r-[c_n]+1/2}{2^d}}\Bigr]$, %$\bar{\tau}_n=\begin{cases}c_n, & [c_n]=c_n\\ [c_n]+1, &[c_n]\not=c_n\end{cases}$ and $\tilde{\tau}_n=\begin{cases}n_0/2+m_n, &[n_0/2]=n_0/2\\
%[n_0/2]+m_n+1, &[n_0/2]\not=n_0/2\end{cases}$, and $\tau_n\in\mathbb{Z}$ is the $n-$th component of $\tau=(\tau_1,\ldots,\tau_N)\in\mathbb{Z}^N$; 
$\bar{u_n}\le u_n$ and $\tilde{u}_n\le u_n$ on $\mathbb{R}$. Moreover, \begin{equation}\label{more-}\|I_{\Omega_n^-}^{m_n}f\|_{{B}_{pq}^{s,w}(\mathbb{R}^N)}\lesssim C_{\Omega_n^-}^{m_n}\|f\|_{{B}_{pq}^{s+m_n,u}(\mathbb{R}^N)},\qquad\textrm{where }\ C_{\Omega_n^-}^{m_n}:=\sup_{d\in\mathbb{N}_0}\bigl[\mathscr{M}_{\Omega_n^-}^{m_n}(d)+\mathscr{N}_{\Omega_n^-}^{m_n}({d+1})\bigr].\end{equation}
{\rm (ii)} If $I_{\Omega_n^-}^{m_n}f\in {A}_{pq}^{s,w}(\mathbb{R}^N)$ then $f\in {A}_{pq}^{s-m_n,w}(\mathbb{R}^N)$, besides,
$$\|f\|_{{A}_{pq}^{s-m_n,w}(\mathbb{R}^N)}\lesssim\|I_{\Omega_n^-}^{m_n}f\|_{{A}_{pq}^{s,w}(\mathbb{R}^N)}.$$
\end{theorem}

\begin{remark} The case $0<p\le 1$ can be also involved into consideration in Theorems \ref{ImagesA} and \ref{ImagesB} with properly modified conditions to be satisfied instead of $C_{\Omega_l^+}^{m_l}<\infty$ and $C_{\Omega_n^-}^{m_n}<\infty$, respectively (see \cite[\S\,1.4]{PSU} and \cite[Chapter 11, Section 1.5, Theorem 4]{KA}.
\end{remark}

\begin{remark} By the cost of evaluation constants in \eqref{more+} and \eqref{more-}, depending on fixed number parameters only, one can reduce the conditions \eqref{Md+} and \eqref{Nd+} (or \eqref{Md-} and  \eqref{Nd-}) to \eqref{Nd+} or \eqref{Nd-}, respectively.
\end{remark}

\begin{remark}
The conditions \eqref{Md+} (or \eqref{Md-}) and \eqref{Nd+} (or \eqref{Nd-}) can be significantly simplified in some special cases. In particular, if $p=2$ and $\int_{Q_{d\tau}}v_k\approx\,2^{-d}v_k(\tau/2^d)$ for the both $v_k=u_k$ and $v_k=w_k$, then 
$$\mathscr{M}_{\Omega_k^\pm}^{m_k}(d)\equiv \mathscr{M}_{\Omega_k^\pm}^{m_k}(0)\quad\textrm{and}\quad\mathscr{N}_{\Omega_k^\pm}^{m_k}({d+1})\equiv \mathscr{N}_{\Omega_k^\pm}^{m_k}(1)\qquad\textrm{for any }d\in\mathbb{N}_0,\qquad(k\in\{l,n\}).$$
\end{remark}

Our main result, which is connecting images and pre--images of integration operators $I_{\Omega^\star}^{|m|}$ of orders $m=(m_1,\ldots,m_N)$, follows by combinations of Theorems \ref{ImagesA} and \ref{ImagesB} and reads
\begin{theorem}\label{ImagesComb} Let $1<p<\infty$, $0<q\le\infty$, $s\in\mathbb{R}$, weights $u,w\in\mathscr{A}_\infty^\loc$ and $f\in L_1^\textrm{loc}(\mathbb{R}^N)$, $\mathrm{supp}\,f\subset\Omega^\star$. Let $I_{\Omega^\star}^{|m|}$ be an operator of order $m=(m_1,\ldots,m_N)$ produced by $r_m$ combinations of Riemann--Liouville operators of the forms \eqref{left} and/or \eqref{right}, where $r_m:=\mathrm{card}\bigl\{m_l\in \{m_1,\ldots,m_N\}\colon m_l\not=0,\ l\in\{1,\ldots,N\}\bigr\}$.\\ {\rm (i)} Assume $u,w$ are of product type, that is $v(x_1,\ldots,x_N)=v_1(x_1)\cdot\ldots\cdot v_N(x_N)$ for the both $v=u$ and $v=w$ with one variable functions $v_l(x_l)$, $l=1,\ldots,N$. Then $I^{|m|}_{\Omega^\star} f\in {B}_{pq}^{s,w}(\mathbb{R}^N)$ if $f\in {B}_{pq}^{s+|m|,u}(\mathbb{R}^N)$ and the following conditions are satisfied: $$\mathscr{M}_{\Omega^\star}^{|m|}(d)+\mathscr{N}_{\Omega^\star}^{|m|}(d+1):=\sum_{k\in\{1,\ldots,N\}:\,m_k\not=0}\Bigl[\mathscr{M}_{\Omega^\pm_k}^{m_k}(d)+\mathscr{N}_{\Omega^\pm_k}^{m_k}(d+1)\Bigr]<\infty \quad\textrm{for all }\ d\in\mathbb{N}_0.$$ Moreover,
$$\|I^{|m|}_{\Omega^\star} f\|_{{B}_{pq}^{s,w}(\mathbb{R}^N)}\lesssim C^{|m|}_{\Omega^\star} \|f\|_{{B}_{pq}^{s+|m|,u}(\mathbb{R}^N)},\qquad\textrm{where }\ C^{|m|}_{\Omega^\star}:=\sup_{d\in\mathbb{N}_0}\bigl[\mathscr{M}_{\Omega^\star}^{|m|}(d)+\mathscr{N}_{\Omega^\star}^{|m|}(d+1)\bigr].$$
{\rm (ii)} If $I^{|m|}_{\Omega^\star} f\in {A}_{pq}^{s,w}(\mathbb{R}^N)$ then $f\in {A}_{pq}^{s-|m|,u}(\mathbb{R}^N)$, besides,
$$\|f\|_{{A}_{pq}^{s-|m|,w}(\mathbb{R}^N)}\lesssim\|I^{|m|}_{\Omega^\star}f\|_{{A}_{pq}^{s,w}(\mathbb{R}^N)}.$$
\end{theorem}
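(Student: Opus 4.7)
The plan is to decompose $I^{|m|}_{\Omega^\star}$ as a composition of $r_m$ one--variable Riemann--Liouville operators (each acting on a different coordinate and thus commuting pairwise by Fubini) and then iteratively apply Theorems \ref{ImagesA} and \ref{ImagesB}. First I would enumerate the indices with $m_k\ne 0$ as $k_1<\ldots<k_{r_m}$, with associated signs $\star_{k_j}\in\{+,-\}$, and write $I^{|m|}_{\Omega^\star}=I^{m_{k_{r_m}}}_{\Omega^{\star_{k_{r_m}}}_{k_{r_m}}}\circ\cdots\circ I^{m_{k_1}}_{\Omega^{\star_{k_1}}_{k_1}}$. The ordering is immaterial since the factors involve integration in disjoint variables.

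Next I would build three matched chains: intermediate weights $u=u^{(0)},u^{(1)},\ldots,u^{(r_m)}=w$, where $u^{(j)}$ is obtained from $u^{(j-1)}$ by replacing only the $k_j$--th one--variable factor $u_{k_j}$ by $w_{k_j}$ (so every $u^{(j)}$ stays in $\mathscr A_\infty^\loc$ and of product type); smoothness indices $s-|m|=s^{(0)}<s^{(1)}<\ldots<s^{(r_m)}=s$ with $s^{(j)}=s^{(j-1)}+m_{k_j}$; and successive images $f^{(0)}:=f$, $f^{(j)}:=I^{m_{k_j}}_{\Omega^{\star_{k_j}}_{k_j}}f^{(j-1)}$, so that $f^{(r_m)}=I^{|m|}_{\Omega^\star}f$. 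The assumption $\mathrm{supp}\,f\subset\Omega^\star$ propagates through the iteration: the operators applied before step $j$ only integrate in coordinates $k_1,\ldots,k_{j-1}\ne k_j$, so the support of $f^{(j-1)}$ in the $k_j$--th direction is still contained in $\Omega^{\star_{k_j}}_{k_j}$, which is precisely the hypothesis required by the one--variable theorem. For part (i) in the Besov case I would then apply Theorem \ref{ImagesA}(i) or \ref{ImagesB}(i) (according to $\star_{k_j}$) at step $j$ to the pair $(u^{(j-1)},u^{(j)})$, obtaining
$$
\|f^{(j)}\|_{B^{s^{(j)},u^{(j)}}_{pq}(\mathbb R^N)}\lesssim C^{m_{k_j}}_{\Omega^{\star_{k_j}}_{k_j}}\,\|f^{(j-1)}\|_{B^{s^{(j-1)},u^{(j-1)}}_{pq}(\mathbb R^N)},
$$
and chaining these inequalities over $j=1,\ldots,r_m$ would yield the stated bound with $C^{|m|}_{\Omega^\star}=\sup_{d\in\mathbb N_0}\sum_j[\mathscr M^{m_{k_j}}_{\Omega^{\star_{k_j}}_{k_j}}(d)+\mathscr N^{m_{k_j}}_{\Omega^{\star_{k_j}}_{k_j}}(d+1)]$. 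To extend from $B$ to $F$ I would invoke the embedding chain $B^{s,w}_{pq_1}(\mathbb R^N)\hookrightarrow F^{s,w}_{pq}(\mathbb R^N)\hookrightarrow B^{s,w}_{pq_2}(\mathbb R^N)$ with $q_1\le\min\{p,q\}\le\max\{p,q\}\le q_2$ recalled in the introduction. Part (ii) is easier: the reverse inequalities in Theorems \ref{ImagesA}(ii) and \ref{ImagesB}(ii) are already on the full $A$--scale and carry no weight condition, so the same iteration applied coordinate by coordinate immediately gives $\|f\|_{A^{s-|m|,w}_{pq}(\mathbb R^N)}\lesssim\|I^{|m|}_{\Omega^\star}f\|_{A^{s,w}_{pq}(\mathbb R^N)}$.

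The hard part will be bookkeeping rather than analysis: at every iterate I must verify that $f^{(j-1)}$ is being fed into the correct one--variable theorem, namely the one whose source weight in coordinate $k_j$ is $u_{k_j}$ and whose target is $w_{k_j}$, with \emph{some} admissible $\mathscr A_\infty^\loc$ factors in the remaining coordinates. The one--factor--at--a--time swap defining $u^{(j)}$ is tailored precisely to this requirement, and the product--type structure of $u$ and $w$ makes each $u^{(j)}$ a tensor product of one--variable $\mathscr A_\infty^\loc$ weights, hence in $\mathscr A_\infty^\loc$. A minor point I would double--check is that the conditions $\mathscr M^{m_{k_j}}_{\Omega^{\star_{k_j}}_{k_j}}(d)+\mathscr N^{m_{k_j}}_{\Omega^{\star_{k_j}}_{k_j}}(d+1)<\infty$ of the one--variable theorems involve only the pair $(u_{k_j},w_{k_j})$ of one--variable weights and not the other factors, so they remain the same regardless of which intermediate weight $u^{(j-1)}$ is chosen; this is visible from \eqref{Md+}--\eqref{Nd+} and \eqref{Md-}--\eqref{Nd-}, where integration is only in the single variable $x_{k_j}$.
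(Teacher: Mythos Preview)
Your proposal is correct and is precisely the approach the paper takes: the paper offers no separate proof of Theorem~\ref{ImagesComb} beyond the one--line remark that it ``follows by combinations of Theorems~\ref{ImagesA} and~\ref{ImagesB}'', and your iteration with the intermediate product weights $u^{(j)}$ and smoothness levels $s^{(j)}$ is exactly the right way to make that sentence rigorous. One cosmetic point: chaining the one--variable estimates literally produces the \emph{product} $\prod_j C^{m_{k_j}}_{\Omega^{\star_{k_j}}_{k_j}}$ rather than the sum defining $C^{|m|}_{\Omega^\star}$, but since the estimate is stated only up to $\lesssim$ and the finiteness of the sum is equivalent to the finiteness of each factor, this does not affect the conclusion.
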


Integration operators $I_\star^{|m|}$ produced by combinations of Riemann--Liouville operators of the forms
\begin{equation}\label{left'}
I_{+}^{m_l} f({y}_{x_l}):=\frac{1}{\Gamma(m
_l)}\int_{-\infty}^{x_l}\frac{f({y})}{(x_l-y_l)^{1-m_l}}\,dy_l,\qquad x_l\in\mathbb{R},
\end{equation} and
\begin{equation}\label{right'}
I_{-}^{m_n} f({y}_{x_l}):=\frac{1}{\Gamma(m_n)}\int_{x_n}^{+\infty}\frac{f({y})}{(y_n-x_n)^{1-m_n}}\,dy_n,\qquad x_n\in\mathbb{R},
\end{equation} can be also taken into consideration provided $I_{+}^{m_l}f\in L_1^\mathrm{loc}(\mathbb{R}^N)$ and $I_{-}^{m_n}f\in L_1^\mathrm{loc}(\mathbb{R}^N)$, respectively. Symbol $\star=(\star_1,\ldots,\star_N)$ in $I_\star^{|m|}$ has the same meaning as before and is related to $m=(m_1,\ldots,m_N)$.
\begin{theorem}\label{ImagesCombRem} Let $1<p<\infty$, $0<q\le\infty$, $s\in\mathbb{R}$, weights $u,w\in\mathscr{A}_\infty^\loc$ and $f\in L_1^\textrm{loc}(\mathbb{R}^N)$. Let $I_{\star}^{|m|}$ be an integration operator of order $m=(m_1,\ldots,m_N)$ produced by $r_m$ combinations of operators of the forms \eqref{left'} and/or \eqref{right'}. Assume that $I_{\star}^{|m|}f\in L_1^\textrm{loc}(\mathbb{R}^N)$.\\ {\rm (i)} Suppose $u,w$ are of product type, that is $v(x_1,\ldots,x_N)=v_1(x_1)\cdot\ldots\cdot v_N(x_N)$ for the both $v=u$ and $v=w$ with one variable functions $v_l(x_l)$, $l=1,\ldots,N$. Then $I^{|m|}_{\star} f\in {B}_{pq}^{s,w}(\mathbb{R}^N)$ if $f\in {B}_{pq}^{s+|m|,u}(\mathbb{R}^N)$ and \begin{equation}\label{uslovie}\mathscr{M}_{\star}^{|m|}(d)+\mathscr{N}_{\star}^{|m|}(d+1):=\sum_{k\in\{1,\ldots,N\}:\,m_k\not=0}\Bigl[\mathscr{M}_{\pm}^{m_k}(d)+\mathscr{N}_{\pm}^{m_k}(d+1)\Bigr]<\infty \quad\textrm{for all }\ d\in\mathbb{N}_0,\end{equation} where $\mathscr{M}_{\pm}^{m_k}(d)=\mathscr{M}_{\Omega^\pm_k}^{m_k}(d)$ and $\mathscr{N}_{\pm}^{m_k}(d+1)=\mathscr{N}_{\Omega^\pm_k}^{m_k}(d+1)$ with supreme taken over all $\tau_k\in\mathbb{Z}$. Moreover,
$$\|I^{|m|}_{\star} f\|_{{B}_{pq}^{s,w}(\mathbb{R}^N)}\lesssim C^{|m|}_{\star} \|f\|_{{B}_{pq}^{s+|m|,u}(\mathbb{R}^N)},\qquad\textrm{where }\ C^{|m|}_{\star}:=\sup_{d\in\mathbb{N}_0}\bigl[\mathscr{M}_{\star}^{|m|}(d)+\mathscr{N}_{\star}^{|m|}(d+1)\bigr].$$
{\rm (ii)} If $I^{|m|}_{\star} f\in {A}_{pq}^{s,w}(\mathbb{R}^N)$ then $f\in {A}_{pq}^{s-|m|,u}(\mathbb{R}^N)$, besides,
$$\|f\|_{{A}_{pq}^{s-|m|,w}(\mathbb{R}^N)}\lesssim\|I^{|m|}_{\star}f\|_{{A}_{pq}^{s,w}(\mathbb{R}^N)}.$$
\end{theorem}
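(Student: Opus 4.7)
The plan is to run the proof strategy of Theorem \ref{ImagesComb} with only one substantive modification: in the proofs of Theorems \ref{ImagesA}(i) and \ref{ImagesB}(i) the support hypothesis $\mathrm{supp}\,f\subset\Omega^\star$ entered in exactly one place, namely to terminate the telescoping sums \eqref{L3} and \eqref{L8} at the finite index $[c_l]$ (or $[c_n]$), after which the one-variable inequality reduced to a half-line discrete Hardy inequality from \cite[\S\,1]{PSU} characterised by the suprema in \eqref{Md+}--\eqref{Nd-} taken over $\tau_k\ge[c_k]$ (or $\tau_k\le[c_k]$). In the present setting that step has to be replaced by a fully bilateral telescoping on $\mathbb{Z}$, and correspondingly the relevant weighted inequality becomes the two-sided discrete Hardy inequality on $\mathbb{Z}$ from \cite{PSU}, whose characterising constant has precisely the form of $\mathscr{M}_{\pm}^{m_k}$ and $\mathscr{N}_{\pm}^{m_k}$ but with supremum taken over all $\tau_k\in\mathbb{Z}$, as in \eqref{uslovie}.

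For part (i), I would first use the product structure of $u$ and $w$ to separate variables and thereby reduce to a single one-variable operator $I_{\pm}^{m_k}$, iterating over the $r_m$ coordinates for which $m_k\neq 0$. Then, following the scheme of Example \ref{Exam} with parameters $n_k=n_0^\ast$, $\Bbbk_l=1$, $\boldsymbol{m}_l(\Bbbk_l)=n_0^\ast+m_l$, I would represent $\|I_\star^{|m|}f\|_{A_{pq}^{s,w}(\mathbb{R}^N)}$ via Theorem \ref{main'} as $\|\lambda\|_{a_{pq}^{s,w}}$. The telescoping identities \eqref{L3} and \eqref{L8} carried out now on all of $\mathbb{Z}$ convert each $l$-th coordinate sum into a bilateral discrete convolution against the coefficients $\boldsymbol{\mu}_{00r}$, $\boldsymbol{\nu}_{1(d-1)r}$, which the condition \eqref{uslovie} majorises by the analogous sums with weights $\bar{u}_k$, $\tilde{u}_k$. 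A single application of the B-spline differentiation identity \eqref{diff'} in the $l$-th variable (cf.\ \eqref{L7}, \eqref{L12}) trades the factor $I_{\pm}^{m_k}$ paired against a spline of order $n_0^\ast$ for a spline of order $n_0^\ast+m_l$ acting directly on $f$, so that Theorem \ref{main'} applied in the reverse direction to $f$ yields $\|\lambda^\ast\|_{a_{pq}^{s-|m|,u}}\approx\|f\|_{A_{pq}^{s-|m|,u}(\mathbb{R}^N)}$ and the claimed norm estimate follows.

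Part (ii) should be essentially automatic: the lower bound in Theorems \ref{ImagesA}(ii) and \ref{ImagesB}(ii) does not use the support hypothesis on $f$ at all, so the same argument --- based on the reverse direction of \eqref{diff'} together with Theorem \ref{main'} applied alternately to $I_\star^{|m|}f$ and to $f$ with two different choices of Battle--Lemari\'{e} parameters (as in the proof of Theorem \ref{ImagesA}(ii)) --- carries over verbatim.

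The main obstacle will be the legitimacy of the bilateral telescoping in (i). When $f$ is not compactly supported, the formal identity $\widetilde{\mathbf{\Phi}}_{\tau_l}=-\sum_{r>\tau_l}(\widetilde{\mathbf{\Phi}}_{r-1}-\widetilde{\mathbf{\Phi}}_r)$ and its analogue for $\widetilde{\mathbf{\Psi}}$ have to be interpreted carefully against the distributional pairing with $I_\star^{|m|}f$. However, each difference $\widetilde{\mathbf{\Phi}}_{r-1}-\widetilde{\mathbf{\Phi}}_r$ is by \eqref{diff} a compactly supported B-spline of one higher order, and the standing hypothesis $I_\star^{|m|}f\in L_1^{\mathrm{loc}}(\mathbb{R}^N)$ together with the locally finite overlap of the relevant supports guarantees that the cumulative pairings converge absolutely term-by-term. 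Once this is in place, the two-sided discrete Hardy inequality of \cite[\S\,1]{PSU} delivers the desired bound with the suprema in \eqref{uslovie} extended over all of $\mathbb{Z}$, and the remainder of the argument parallels Theorem \ref{ImagesComb} line by line.
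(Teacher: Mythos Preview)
Your proposal is correct and aligns with the paper's own treatment. In fact the paper gives no separate proof of Theorem~\ref{ImagesCombRem} at all: it is stated immediately after Theorem~\ref{ImagesComb} with the prefatory remark that operators of the forms \eqref{left'}--\eqref{right'} ``can be also taken into consideration provided $I_\pm^{m_k}f\in L_1^{\mathrm{loc}}$'', and the result is left to the reader as the evident extension. Your reconstruction --- bilateral telescoping in place of the terminated one in \eqref{L3}, \eqref{L8}, followed by the full--line discrete Hardy inequality from \cite{PSU} whose characterising constant is exactly $\mathscr{M}_\pm^{m_k}$, $\mathscr{N}_\pm^{m_k}$ with supremum over all of $\mathbb{Z}$ --- is precisely the modification the paper has in mind, and your observation that part~(ii) never used the support hypothesis is accurate.

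One small refinement: your justification for the convergence of the bilateral telescoping (``$I_\star^{|m|}f\in L_1^{\mathrm{loc}}$ together with locally finite overlap'') is not quite the right mechanism. Local integrability alone does not force the boundary term $\langle I_\star^{|m|}f,\widetilde{\mathbf{\Phi}}_{-M}\rangle$ to vanish as $M\to\infty$. What actually makes the infinite sum $\sum_{r\le\tau_l}\boldsymbol{\mu}_{00r}$ absolutely convergent is the weight condition itself: finiteness of $\mathscr{M}_{+}^{m_k}(d)$ with supremum over all $\tau_k\in\mathbb{Z}$ forces $\sum_{r\le\tau_k}\bigl(\int_{Q_{dr}}\bar u_k\bigr)^{1-p'}<\infty$, and then H\"older against $\sum_r|\boldsymbol{\mu}_{00r}|^p\bar u_{dr}<\infty$ (which holds because $f\in A_{pq}^{s-|m|,u}$) gives absolute convergence of the telescoped series. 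With that correction your argument is complete and, indeed, more detailed than what the paper itself supplies.
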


\section{Entropy and approximation numbers}\label{EnAp}

For $k\in\mathbb{N}$ and a linear continuous operator $T$ between quasi--Banach spaces $A_1$ and $A_2$ \cite{CS, Pi}:\\ -- the $k$--th  (dyadic) entropy number $e_k(T)$ of $T\colon A_1\rightarrow A_2$ is the infimum of all numbers $\varepsilon>0$ such that there exist $2^{k-1}$ balls in $A_2$ of radius $\varepsilon$ which cover $TU_1$, where $U_1$ is the unit ball in $A_1$;\\
-- the $k$--th approximation number $a_k(T)$ of $T\colon A_1\rightarrow A_2$ is the infimum of all numbers $\|T-L\|$, where $L$ runs through the collection of all continuous linear mappings from $A_1$ to $A_2$ with ${\rm rank}\, L<k$.

Let $1<p<\infty$, $0<q\le\infty$, $s\in\mathbb{R}$ and $w\in\mathscr{A}_\infty^\loc$. Let $I_{\star}^{|m|}$ be an integration operator of order $m=(m_1,\ldots,m_N)$ produced by $r_m$ combinations of operators of the forms \eqref{left'} and/or \eqref{right'}. Under $A_1$ and $A_2$ we shall understand weighted Besov and Triebel--Lizorkin spaces $A_{pq}^{s,w}(\mathbb{R}^N)\subset L_1^\textrm{loc}(\mathbb{R}^N)$.

Our results in this part of the article connect entropy and approximation numbers
%,$e_k(I^{|m|}_\star)$, $k\in\mathbb{N}$
 of operators
$$I^{|m|}_{{\star}}\colon {A}_{p_1q_1}^{s_1,v}(\mathbb{R}^N)\rightarrow {A}_{p_2q_2}^{s_2,w}(\mathbb{R}^N)$$
with the same characteristics of embeddings $id$ of Besov and Triebel--Lizorkin classes ${A}_{pq}^{s,w}(\mathbb{R}^N)$.

\begin{theorem}\label{ImagesNumb} Let $1<p_1,p_2<\infty$, $0<q_1,q_2\le\infty$, $-\infty<s_1,s_2<+\infty$ and factorisable weights $u,v,w$ be of $\mathscr{A}_\infty^\loc$ type. For a given $N\in\mathbb{N}$ let $m=(m_1,\ldots,m_N)$ be a multi--index, $m_k\in\mathbb{N}_0$, $k=1,\ldots,N$. Assume that $B_{p_1q_1}^{s_1,v}(\mathbb{R}^N)$, $B_{p_2q_2}^{s_2+|m|,u}(\mathbb{R}^N)$ and $B_{p_2q_2}^{s_2-|m|,w}(\mathbb{R}^N)$ are subspaces of $L_1^\textrm{loc}(\mathbb{R}^N)$. Suppose that $I^{|m|}_{\star}\colon {B}_{p_1q_1}^{s_1,v}(\mathbb{R}^N)\hookrightarrow {B}_{p_2q_2}^{s_2,w}(\mathbb{R}^N)$ is an integration operator of order $m$ produced by $r_m$ combinations of Riemann--Liouville operators of the forms \eqref{left'} and/or \eqref{right'}. %We put $$\frac{1}{p^\ast}:=\Bigl(\frac{1}{p_2}-\frac{1}{p_1}\Bigr)_+\qquad\textrm{and}\qquad\frac{1}{q^\ast}:=\Bigl(\frac{1}{q_2}-\frac{1}{q_1}\Bigr)_+.$$
Let $$id_u\colon {B}_{p_1q_1}^{s_1,v}(\mathbb{R}^N)\hookrightarrow {B}_{p_2q_2}^{s_2+|m|,u}(\mathbb{R}^N), \qquad id_w\colon {B}_{p_1q_1}^{s_1,v}(\mathbb{R}^N)\hookrightarrow {B}_{p_2q_2}^{s_2-|m|,w}(\mathbb{R}^N)$$ be the embedding operators. It holds in the case $s_2-|m|\le s_1$ $$e_k(I^{|m|}_{\star})\gtrsim e_k(id_w)\quad(k\in\mathbb{N}),\qquad a_k(I^{|m|}_{\star})\gtrsim a_k(id_w)\quad(k\in\mathbb{N}).$$
If $s_2+|m|\le s_1$ and the conditions \eqref{uslovie} are satisfied then $$e_k(I^{|m|}_{\star})\lesssim e_k(id_u)\quad(k\in\mathbb{N}),\qquad a_k(I^{|m|}_{\star})\lesssim a_k(id)\quad(k\in\mathbb{N}).$$

\end{theorem}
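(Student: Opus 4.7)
The plan is to deduce both estimates from the multiplicativity of $s$--numbers, that is, the inequalities $s_k(BA)\le\|B\|\cdot s_k(A)$ and $s_k(BA)\le s_k(B)\cdot\|A\|$ valid for $s_k\in\{e_k,a_k\}$, combined with the two--sided norm comparisons between $f$ and $I^{|m|}_{\star}f$ already established in Theorem \ref{ImagesCombRem}. In both directions the argument is a factorisation diagram: either $I^{|m|}_{\star}$ is composed with a bounded ``differentiation'' that returns the identity embedding, or it is factored through a bounded embedding followed by the bounded realisation of $I^{|m|}_{\star}$ itself.

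For the lower estimate, Theorem \ref{ImagesCombRem}(ii) provides the inequality $\|f\|_{A_{pq}^{s-|m|,w}(\mathbb{R}^N)}\lesssim\|I^{|m|}_{\star}f\|_{A_{pq}^{s,w}(\mathbb{R}^N)}$, which I interpret as the boundedness of the left inverse
\[
D^{m}\colon B_{p_2q_2}^{s_2,w}(\mathbb{R}^N)\to B_{p_2q_2}^{s_2-|m|,w}(\mathbb{R}^N),\qquad I^{|m|}_{\star}f\mapsto f.
\]
Since $s_2-|m|\le s_1$, the composition $D^{m}\circ I^{|m|}_{\star}$ coincides with the bounded embedding $id_w$. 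Multiplicativity of $s$--numbers then gives
\[
e_k(id_w)=e_k(D^{m}\circ I^{|m|}_{\star})\le\|D^{m}\|\cdot e_k(I^{|m|}_{\star}),\qquad a_k(id_w)\le\|D^{m}\|\cdot a_k(I^{|m|}_{\star}),
\]
which is precisely the asserted $\gtrsim$ inequalities.

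For the upper estimate, condition \eqref{uslovie} together with Theorem \ref{ImagesCombRem}(i) yields the boundedness of $I^{|m|}_{\star}\colon B_{p_2q_2}^{s_2-|m|,u}(\mathbb{R}^N)\to B_{p_2q_2}^{s_2,w}(\mathbb{R}^N)$ with operator norm majorised by $C^{|m|}_{\star}$. The operator $I^{|m|}_{\star}\colon B_{p_1q_1}^{s_1,v}(\mathbb{R}^N)\to B_{p_2q_2}^{s_2,w}(\mathbb{R}^N)$ then factors as $I^{|m|}_{\star}\circ id_u$ through $B_{p_2q_2}^{s_2-|m|,u}(\mathbb{R}^N)$, and a second application of multiplicativity delivers
\[
e_k(I^{|m|}_{\star})\le\|I^{|m|}_{\star}\|\cdot e_k(id_u)\lesssim C^{|m|}_{\star}\cdot e_k(id_u),
\]
and the analogous estimate with $a_k$ in place of $e_k$.

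The main technical point I expect to need care is ensuring that $D^{m}$ is defined and bounded on all of $B_{p_2q_2}^{s_2,w}(\mathbb{R}^N)$ for the real parameter $s_2$ at hand; Rychkov's inequality \eqref{DifInt} is formulated for natural $s$, and one must appeal to (or extract from the proofs of Theorems \ref{ImagesA} and \ref{ImagesB}) the improved real--$s$ version announced in \S\,\ref{Emb}. If this extension is awkward for some parameter ranges, one may confine the factorisation to the range of $I^{|m|}_{\star}$, where Theorem \ref{ImagesCombRem}(ii) directly furnishes the bounded left inverse; the comparison of $s$--numbers of $id_w$ with those of $I^{|m|}_{\star}$ remains valid since entropy and approximation numbers only see the operator on its domain.
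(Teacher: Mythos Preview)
Your proposal is correct and is essentially the same argument as the paper's: the paper defines $R^\ast\colon B_{p_2q_2}^{s_2-|m|,u}\to B_{p_2q_2}^{s_2,w}$ and $R_\ast\colon B_{p_2q_2}^{s_2,w}\to B_{p_2q_2}^{s_2-|m|,w}$ (your bounded realisation of $I^{|m|}_{\star}$ and your $D^m$, respectively), writes $I_{\star}^{|m|}=R^\ast\circ id_u$ and $id_w=R_\ast\circ I_{\star}^{|m|}$, invokes Theorem~\ref{ImagesCombRem} for the boundedness of $R^\ast$ and $R_\ast$, and applies multiplicativity of entropy and approximation numbers. Your closing remark about the domain of $D^m$ is in fact more careful than the paper, which simply asserts boundedness of $R_\ast$ on the whole space via Theorem~\ref{ImagesCombRem}.
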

\begin{proof} Continuity and compactness criteria for embeddings $id$ of Besov and Triebel--Lizorkin spaces with weights were derived in e.g. \cite{HTr2,HSc}. In particular, assumptions given in \cite[Proposition 2.1]{HSc} implies the restrictions $-\infty<s_2\pm|m|\le s_1<\infty$ on smoothness parameters $s_1$ and $s_2$ in our cases.

Denote $$R^\ast\colon {B}_{p_2q_2}^{s_2+|m|,u}(\mathbb{R}^N)\rightarrow {B}_{p_2q_2}^{s_2,w}(\mathbb{R}^N)$$ and
$$R_\ast\colon {B}_{p_2q_2}^{s_2,w}(\mathbb{R}^N)\rightarrow {B}_{p_2q_2}^{s_2-|m|,w}(\mathbb{R}^N).$$ Then $I_{\star}^{|m|}=R^\ast\circ id_u$ and $id_w=R_\ast\circ I_{\Omega^\star}^{|m|}$, where $R^\ast$, $R_\ast$ are bounded on the strength of Theorem \ref{ImagesCombRem}. From here the result follows directly from multiplicativity properties of entropy and approximation numbers \cite{CS, Pi}.
\begin{displaymath}
                    \xymatrix{
{B}_{p_1q_1}^{s_1,v}(\mathbb{R}^N) \ar[r]^{~~~~~I_{\star}^{|m|}~~} \ar[rd]_{id_u} &
{B}_{p_2q_2}^{s_2,w}(\mathbb{R}^N)\\
&{B}_{p_2q_2}^{s_2+|m|,u}(\mathbb{R}^N) \ar[u]_{R^\ast}}
\qquad
\xymatrix{
{B}_{p_1q_1}^{s_1,v}(\mathbb{R}^N) \ar[r]^{~~~~~I_{\star}^{|m|}~~} \ar[rd]_{id_w} &
{B}_{p_2q_2}^{s_2,w}(\mathbb{R}^N) \ar[d]^{{R_\ast}}\\
&{B}_{p_2q_2}^{s_2-|m|,w}(\mathbb{R}^N) }
\end{displaymath}
\end{proof}

Estimates for the entropy $e_k(id)$
and approximation numbers $a_k(id)$
of embeddings $id$ of Besov and Triebel--Lizorkin spaces with Muckenhoupt weights of purely polynomial growth can be found in \cite{HSc}. By Theorem \ref{ImagesNumb}, these results (see \cite[\S\S\,3,4]{HSc}) can be directly applied for obtaining explicit estimates on $e_k(I^{|m|}_{{\star}})$ and $a_k(I^{|m|}_{{\star}})$ when $B_{p_1q_1}^{s_1,v}(\mathbb{R}^N)\subset L_1^\mathrm{loc}(\mathbb{R}^N)$, $B_{p_2q_2}^{s_2+|m|,u}(\mathbb{R}^N)\subset L_1^\textrm{loc}(\mathbb{R}^N)$ and $B_{p_2q_2}^{s_2-|m|,w}(\mathbb{R}^N)\subset L_1^\textrm{loc}(\mathbb{R}^N)$. Related conditions on $s$ for $A_{pq}^{s}(\mathbb{R}^N,w)\subset L_1^\mathrm{loc}(\mathbb{R}^N)$ to hold can be seen in e.g. \cite[\S\,2]{HSc} and \cite[\S\,5]{Ma}. For the case of so called admissible weight functions $u,v$ and $w$ one can consult \cite{HTr2}. 

\begin{remark} The lower estimates for the entropy and approximation numbers of integration operators in
Triebel--Lizorkin spaces $F_{pq}^{s,w}(\mathbb{R}^N,w)\subset L_1^\textrm{loc}(\mathbb{R}^N)$ follow analogously to the result of Theorem \ref{ImagesNumb}. Upper estimates can be obtained by the chain of embeddings $B_{pq_\ast}^{s,w}(\mathbb{R}^N)\hookrightarrow F_{pq}^{s,w}(\mathbb{R}^N) \hookrightarrow B_{pq^\ast}^{s,w}(\mathbb{R}^N)$, $q_\ast\le\min\{p,q\}\le\max\{p,q\}\le q^\ast$, and basing on the result of Theorem \ref{ImagesNumb} for Besov spaces. 
\end{remark}

Method of reducing the study of behaviour of the entropy and approximation numbers of Hardy integral operators to the same problem for embedding operators was also exploit in \cite{NU}. Some other results related to characteristic numbers of integral operators may be found in \cite{EdLg1, EdLg2, Lg1,Lg2,LNM,LL, LS}.


\begin{thebibliography}{100}

\bibitem{B}\label{B} G. Battle, A block spin construction of ondelettes, Part I: Lemarie functions,{\it Comm. Math. Phys.} \textbf{110} (1987), 601--615.

\bibitem{B1}\label{B1} G. Battle, A block spin construction of ondelettes, Part II: QFT connection, {\it Comm. Math. Phys.} {\bf 114} (1988), 93--102.

\bibitem{N} M. Z. Berkolaiko, I. Ya. Novikov, Images of wavelets under the action of convolution operators, {\it Mat. Zametki} {\bf 55}:5 (1994), 13--24 (Russian); translation in {\it Math. Notes} {\bf 55}:5--6 (1994), 446--454.

\bibitem{CS}\label{CS} B. Carl, I. Stephani, Entropy, Compactness and the Approximation of Operators, Cambridge University Press, Cambridge, UK, 1990.

\bibitem{Chui}\label{Chui} C.K. Chui, An Introduction to Wavelets, NY: Academic Press, 1992.

\bibitem{EdLg1}\label{EdLg1}D.E. Edmunds, J. Lang, Approximation numbers and Kolmogorov widths of Hardy-type operators in a non-homogeneous case, Math. Nachr. \textbf{279}:7 (2006), 727--742.

\bibitem{EdLg2}\label{EdLg2}D.E. Edmunds, J. Lang, Operators of Hardy type, J. Comput. Appl. Math. \textbf{208} (2007), 20--28.

%\bibitem{EdTr}\label{EdTr} D.E. Edmunds and H. Triebel, Function spaces, entropy numbers, differential operators, Cambridge University Press, 1996.

%\bibitem{HP} D.D. Haroske and I. Piotrowska, Atomic decompositions of function spaces with Muckenhoupt weights, and some relation to fractal analysis, {\it Math. Nachr.} {\bf 281}:10 (2008), 1476--1494.

\bibitem{HSc} D. Haroske and L. Skrzypczak, Entropy and approximation numbers of embeddings in function spaces with Muckenhoupt weights. I, {\it Rev. Mat.Complut.} \textbf{21}:1 (2008), 135--177.

%\bibitem{HTr1}\label{HTr1} D. Haroske and H. Triebel, Entropy numbers in weighted function spaces and eigenvalue distributions of some degenerate pseudodifferential operators, I, \textit{Math. Nachr.} {\bf 167} (1994), 131--156.

%\bibitem{HTr1'}\label{HTr1'} D. Haroske and H. Triebel, Entropy numbers in weighted function spaces and eigenvalue distributions of some degenerate pseudodifferential operators, II, {\it Math. Nachr.} {\bf 168} (1994), 109--137.

\bibitem{HTr2}\label{HTr2} D. Haroske and H. Triebel, Wavelet bases and entropy numbers in weighted function spaces, {\it Math. Nachr.} {\bf 278} (2005), 108--132.

\bibitem{IS} M. Izuki and Y. Sawano, Atomic decomposition for weighted Besov / Triebel--Lizorkin spaces with local class of weights, {\it Math. Nachr.} {\bf 285} (2012), 103--126.

\bibitem{KA} L. V. Kantorovich and G. P. Akilov, Functional Analysis, Nauka, Moscow, 1984 [in Russian].

\bibitem{Lg1}\label{Lg1} J. Lang, Improved Estimates for the Approximation Numbers of Hardy-Type Operator, J. Approx.
Theory \textbf{121}:1 (2003), 61--70.

\bibitem{Lg2}\label{Lg2} J. Lang, Estimates for $n$-widths of the Hardy-type operators, J. Approx. Theory \textbf{140} (2006), 141--146.

\bibitem{LNM}\label{LNM} J. Lang, A. Nekvinda, O. Mendez, Asymptotic behavior of the approximation numbers of the Hardy-type operator from $L^p$ into $L^q$ (case $1<p\le q\le 2$ or $2\le p\le q<\infty$), J. Inequal. Pure Appl. Math. \textbf{5}:1 (2004), Article 18.

\bibitem{L}\label{L} P.G. Lemarie, Une nouvelle base d'ondelettes de $L^2(\mathbb{R}^N)$, {\it J. de Math. Pures et Appl.} {\bf 67} (1988), 227--236.

\bibitem{LL}\label{LL} M.A. Lifshits, W. Linde, Approximation and entropy numbers of Volterra operators with application to Brownian motion, Mem. Amer. Math. Soc. \textbf{157}:745 (2002), 1--87.

\bibitem{LS}\label{LS} E.N. Lomakina, V.D. Stepanov, Asymptotic estimates for the approximation and entropy numbers of a one-weight Riemann-Liouville operator, Siberian Adv. Math. \textbf{17}:1 (2007), 1--36.

\bibitem{Ma}\label{Ma} A. Malecka, Haar functions in weighted Besov and Triebel--Lizorkin spaces, J. Approx. Theory \textbf{200} (2015), 1--27.

%\bibitem{Me}\label{Me} Y. Meyer, Wavelets and operators, Cambridge Univ. Press, Cambridge, 1992.

%\bibitem{M}
%B. Muckenhoupt, Hardy's inequality with weights, {\it Studia Math.} {\bf 44} (1972), 31--38.

%\bibitem{M1}
%B. Muckenhoupt, Weighted norm inequalities for the Hardy maximal function, {\it Trans. Amer. Math. Soc.} {\bf 165} (1972), 207--226.

%\bibitem{M2}
%B. Muckenhoupt, The equivalence of two conditions for the weight functions, {\it Studia Math.} {\bf 49} (1973/74), 101--106.

\bibitem{NU} M.G. Nasyrova and E.P. Ushakova, Wavelet basis and entropy numbers of Hardy operator, {\it Anal. Math.} \textbf{44}:4 (2018), 543--576.

\bibitem{Oin2011} R. Oinarov, Boundedness of integral operators from weighted Sobolev space to weighted Lebesgue space, {\it Complex Variables and Elliptic Equations}, \textbf{56}:10--11 (2011).

\bibitem{Pi}\label{Pi} A. Pietsch, Eigenvalues and $s-$numbers, Akad. Verlagsgesellschaft Geest \& Portig, Leipzig, 1987.

\bibitem{PSU} D.V. Prokhorov, V.D. Stepanov, E.P. Ushakova,  Hardy--Steklov integral operators: Part I. {\it Proc. Steklov Inst. Math.} {\bf 300} (2018), 1--112.

\bibitem{R} V.S. Rychkov,  Littlewood--Paley theory and function spaces with $A_p^\loc$ weights, {\it Math. Nachr.} {\bf 224} (2001), 145--180.

\bibitem{SKM} S.G. Samko, A.A. Kilbas, O.I. Marichev, Fractional Integrals and Derivatives: Theory and Applications. Gordon and Breach Science Publ., New York--London, 1993.

\bibitem{Sch}\label{Sch} T. Schott, Fun{c}tion spaces with exponential weights. I, {\it Math. Nahr.} {\bf 189} (1998), 221--242.

\bibitem{Sch'}\label{Sch'} T. Schott, Fun{c}tion spaces with exponential weights. II, {\it Math. Nahr.} {\bf 196} (1998), 231--250.

%\bibitem{S} E.M. Stein, Harmonic analysis, Princeton University Press, Princeton, 1993.

%\bibitem{T} A. Torchinsky, Real--variable methods in harmonic analysis, Pure and Applied Mathematics, vol. 123, Academic Press Inc., Orlando, FL, 1986.

\bibitem{Tr1}\label{Tr1} H. Triebel, Theory of Function Spaces, Birkh\"{a}user Verlag, Basel, 1983.

\bibitem{Tr2}\label{Tr2} H. Triebel, Theory of Function Spaces II, Birkh\"{a}user Verlag, Basel, 1992.

%\bibitem{Tr5}\label{Tr5} H. Triebel, Bases in Function Spaces, Sampling, Discrepancy, Numerical Integration, European Math. Soc. Publishing House, Zurich, 2010.

\bibitem{Tr6}\label{Tr6} H. Triebel, Tempered Homogeneous Function Spaces, European Math. Soc. Publishing House, Zurich, 2015.

\bibitem{RMC}
E.P.\,Ushakova,
Spline wavelet bases in function spaces with Muckenhoupt weights, {\it Rev. Mat. Complut.} {\bf 33} (2020), 125--160. https://doi.org/10.1007/s13163-019-00306-1

\bibitem{RMC*}
E.P.\,Ushakova,
Spline wavelet decomposition in weighted function spaces, {\it Proceedings of the Steklov Institute of Mathematics}, {\bf 312} (2021), accepted. (Available at http://arxiv.org/abs/1911.05255)

\bibitem{JMAA}
E.P.\,Ushakova and K.E.\,Ushakova, Localisation property of Battle--Lemari\'{e} wavelets' sums, {\it Journal of Mathematical Analysis and Applications} {\bf 461}:1 (2018), 176--197.

\bibitem{WBan}\label{WBan} A. Wojciechowska, Local means and wavelets in function spaces with local Muckenhoupt weights, in: Function spaces IX. Vol. 92. Banach Center Publications, 2011, 399-412.

\bibitem{WM}\label{WM} A. Wojciechowska, Multidimensional wavelet bases in Besov and Lizorkin--Triebel spaces, PhD. thesis, Adam Mickiewicz University Pozna\'{n}, Pozna\'{n}, 2012.

\bibitem{W}\label{W} P. Wojtaszczyk, A Mathematical Introduction to Wavelets, Cambridge University Press, Cambridge, UK, 1997.

\end{thebibliography}
\end{document}